\newcommand{\cyrrm}{\fontencoding{OT2}\selectfont\textcyrup}
\newtheorem{prop}{Proposition}[section]
\newtheorem{thm}[prop]{Theorem}
\newtheorem*{thm*}{Theorem}
\newtheorem*{cor*}{Corollary}
\newtheorem*{addendum*}{Addendum}
\newtheorem{cor}[prop]{Corollary}
\newtheorem{lem}[prop]{Lemma}
\newtheorem*{convention*}{Convention}
\newtheorem{question}[prop]{Question}
\newtheorem{ithm}{Theorem}
\newtheorem{icor}[ithm]{Corollary}
\theoremstyle{definition}
\newtheorem*{defn*}{Definition}
\newtheorem{remark}[prop]{Remark}
\newtheorem*{scholium*}{Scholium}
\theoremstyle{remark}
\newtheorem{example}[prop]{Example}
\newtheorem*{example*}{Example}
\numberwithin{equation}{section}
\newcommand{\RR}{\mathbf{R}}
\newcommand{\ZZ}{\mathbf{Z}}
\newcommand{\SL}{\mathrm{SL}}
\newcommand{\la}{\langle}
\newcommand{\ra}{\rangle}
\newcommand{\inv}{^{-1}}
\newcommand{\minus}{\setminus}
\newcommand{\norma}{\mathscr{N}}
\newcommand{\centra}{\mathscr{Z}}
\newcommand{\QZ}{\mathscr{Q\!Z}}
\newcommand{\se}{\subseteq}
\newcommand{\lra}{\longrightarrow}
\newcommand{\Adc}{$\Ad\!$-closed\xspace}
\newcommand{\tto}{\twoheadrightarrow}
\newcommand{\Max}{\mathbf{Max}}
\newcommand{\Maxqf}{\mathbf{Max}_{\mathrm{QF}}}
\newcommand{\Min}{\mathbf{Min}}
\def\bs#1.{
              \def\temp{#1}
              \ifx\temp\empty
                   \mathcal{B}
              \else
                   \mathcal{B}(#1)
              \fi
}
\newcommand{\cat}{{\upshape CAT(0)}\xspace}
\newcommand{\tangle}[2]
{\angle_\mathrm{T}(#1,#2)}
\newcommand{\aangle}[3]
{\angle_{#1}(#2,#3)}
\newcommand{\cangle}[3]
{\overline{\angle}_{#1}(#2,#3)}
\DeclareMathOperator{\Res}{Res}
\DeclareMathOperator{\Ad}{Ad}
\DeclareMathOperator{\Ker}{Ker}
\DeclareMathOperator{\LF}{Rad_{\mathscr{L\!F\!}}}
\def\Aut{\mathop{\mathrm{Aut}}\nolimits}
\def\Inn{\mathop{\mathrm{Ad}}\nolimits}
\def\Linn{\mathop{\mathrm{Linn}}\nolimits}
\def\Out{\mathop{\mathrm{Out}}\nolimits}
\def\Sym{\mathop{\mathrm{Sym}}\nolimits}
\begin{document}
\title[Decomposing locally compact groups into simple
pieces]{Decomposing locally compact groups\\ into simple pieces}
\author[Pierre-Emmanuel Caprace]{Pierre-Emmanuel Caprace*}
\address{UCLouvain, 1348 Louvain-la-Neuve, Belgium}
\email{pe.caprace@uclouvain.be}
\thanks{*F.N.R.S. Research Associate}
\author[Nicolas Monod]{Nicolas Monod$^\ddagger$}
\address{EPFL, 1015 Lausanne, Switzerland}
\email{nicolas.monod@epfl.ch}
\thanks{$^\ddagger$Supported in part by the Swiss National Science
Foundation}
%
\begin{abstract}
We present a contribution to the structure theory of locally compact
groups. The emphasis is on compactly generated locally compact
groups which admit no infinite discrete quotient. It is shown that
such a group possesses a characteristic cocompact subgroup which is
either connected or admits a non-compact non-discrete topologically
simple quotient. We also provide a description of characteristically
simple groups and of groups all of whose proper quotients are
compact. We show that Noetherian locally compact groups without
infinite discrete quotient admit a subnormal series with all
subquotients compact, compactly generated Abelian, or compactly
generated topologically simple.

Two appendices introduce results and examples around the concept of \emph{quasi-product}.
\end{abstract}
\maketitle
\let\languagename\relax  

\tableofcontents

\section{Introduction}
\subsection*{On structure theory}
The structure of \emph{finite} groups can to a large extent be reduced
to finite simple groups and the latter
have famously been classified (see
\emph{e.g.}~\cite{Gorenstein-Lyons-Solomon} \emph{sqq.}).

For general locally compact groups, both the reduction to simple groups
and the study of the latter still constitute
major challenges. The connected case has found a satisfactory answer:
Indeed, the solution to Hilbert's fifth
problem (see~\cite[4.6]{Montgomery-Zippin}) reduces the question to Lie
theory upon discarding compact kernels. Lie
groups are then described by investigating separately the soluble groups
and the simple factors, which are
classified since the time of \'E.~Cartan.

\medskip

The contemporary structure problem therefore regards totally
disconnected groups;
there is not yet even a conjectural picture of a structure theory. In
fact, until recently, the only
structure theorem on totally disconnected locally compact groups was
this single sentence in van~Dantzig's
1931 thesis:
\begin{quotation}
``\emph{Een (gesloten) Cantorsche groep bevat willekeurig kleine open
ondergroepen.}''\hfill(III~\S\,1, TG~38 on page~18 in~\cite{vanDantzig31})
\end{quotation}

\smallskip
Recent progress, including statements on simple groups, is provided
by the work of G.~Willis \cite{Willis94, Willis07}. New examples of
simple groups have appeared in the geometric context of trees and of
general buildings.

\medskip

As results and examples for simple groups are being discovered, it
becomes desirable to have a reduction step to simple
groups~--- in parallel to the known cases of finite and connected
groups. However, any reasonable attempt
at classification must in one way or another exclude \emph{discrete}
groups: the latter are widely considered to be
unclassifiable; this opinion can be given a mathematical content as
\emph{e.g.} in~\cite{Thomas-Velickovic}. The discrete case also illustrates that
there may be no simple (infinite) quotient, or even subquotient, at all.

\smallskip

The following trichotomy shows that, away from the unavoidable discrete situation,
there is a compelling first answer.

\begin{ithm}\label{thm:structure:first}
Let $G$ be a compactly generated locally compact group. Then exactly one of the
following holds.
\begin{enumerate}

\item $G$ has an infinite discrete quotient.

\item $G$ has a cocompact normal subgroup that is connected and soluble.

\item $G$ has a cocompact normal subgroup that admits exactly $n$
non-compact simple quotients (and no non-trivial discrete quotient), where $0<n<\infty$.
\end{enumerate}
\end{ithm}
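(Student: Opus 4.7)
The plan is to dispose of case (i) first and then to invoke the core structure result announced in the abstract. Assume henceforth that $G$ has no infinite discrete quotient, so that (i) fails, and the task is to establish (ii) or (iii). The central input is a characteristic cocompact closed subgroup $H \le G$ which is either connected, or else admits a non-compact non-discrete topologically simple quotient.

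Case A: $H$ is connected. By the Gleason--Yamabe solution to Hilbert's fifth problem, $H$ admits a compact normal subgroup $K$ such that $H/K$ is a connected Lie group. Consider the soluble radical of $H/K$ and let $R \le H$ be its pre-image; $R$ is a characteristic closed connected subgroup of $H$, hence normal in $G$. If $(H/K)/(R/K)$ is compact, then a modest refinement (passing to the topologically soluble radical of $H$ itself rather than that of $H/K$) yields a connected soluble cocompact normal subgroup of $G$, which is case (ii). Otherwise $(H/K)/(R/K)$ is a non-compact connected semisimple Lie group, and its finitely many non-compact simple Lie factors descend to exactly $n \ge 1$ non-compact simple (non-discrete, since connected) quotients of $H$, placing us in (iii). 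That $H$ has no non-trivial discrete quotient is automatic from its connectedness.

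Case B: $H$ has a non-compact non-discrete topologically simple quotient. The heart of the matter is to show that the number of such quotients is finite. Compact generation of $H$ (which is inherited from $G$ via the cocompact inclusion, since $G$ is compactly generated) together with the quasi-product machinery developed in the appendices should force any family of pairwise-distinct non-compact simple quotients of $H$ to be finite: an infinite family would embed $H$ quasi-densely into an infinite direct product of non-trivial groups, which is incompatible with compact generation. Moreover, $H$ has no non-trivial discrete quotient, since any such quotient would pull back through the compact extension $G/H$ to an infinite discrete quotient of $G$, contradicting the standing assumption. This yields (iii) with $0 < n < \infty$.

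The main obstacle I anticipate is the finiteness bound $n < \infty$ in case B; it rests on the quasi-product analysis for compactly generated locally compact groups, and will require a careful inventory of the lattice of closed normal subgroups of $H$ whose quotients are non-compact and topologically simple. Mutual exclusivity of the three cases is then a routine check: both (ii) and (iii) furnish a cocompact normal subgroup whose presence prevents $G$ from having an infinite discrete quotient (otherwise such a quotient would factor through the compact group $G/H$, which is absurd), and (ii) is incompatible with (iii) because a connected soluble locally compact group has no non-compact non-discrete topologically simple quotient—any such quotient would, via Gleason--Yamabe, be a connected soluble Lie group that is topologically simple, hence trivial.
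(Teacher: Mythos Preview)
Your proposal has a circularity at its core: the ``central input'' you invoke --- a characteristic cocompact subgroup $H$ that is either connected or has a non-compact non-discrete simple quotient --- is not an independent lemma but a summary of what Theorem~\ref{thm:structure:first} itself establishes. In the paper the cocompact subgroup is the discrete residual $\Res(G)$, whose cocompactness (assuming (i) fails) is Theorem~\ref{thm:OpenNormal}; the \emph{existence} of a non-compact simple quotient of $\Res(G)$ in the non-almost-connected case is then deduced via the Grigorchuk--Willis argument (Proposition~\ref{prop:GriWi}), which produces a maximal proper closed normal subgroup, combined with the fact that $\Res(G)$ has no non-trivial compact quotient (Corollary~\ref{cor:MaxCptQuotient}). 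You cannot simply posit the dichotomy for $H$; you must supply these steps.

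Two further gaps. First, the finiteness bound $n<\infty$ does not come from the quasi-product material in the appendices (which constructs examples rather than bounding normal subgroups), but from Corollary~\ref{cor:SimpleQuotients}, which rests on the Schreier-graph argument of Proposition~\ref{prop:FilterNormalSubgroups}: in a compactly generated totally disconnected group without non-trivial compact normal subgroups, any filtering family of non-discrete closed normal subgroups has non-trivial intersection. Your heuristic ``incompatible with compact generation'' points in the right direction but is not a proof, and the appendix machinery will not furnish one. Second, your argument that $H$ has no non-trivial discrete quotient is flawed: a surjection $H\to D$ onto a discrete group has kernel normal in $H$ but not \emph{a priori} in $G$, so nothing ``pulls back'' to a discrete quotient of $G$. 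The paper handles this directly in Theorem~\ref{thm:OpenNormal}, showing that $\Res(G)$ itself has no non-trivial discrete quotient --- a genuinely non-trivial step requiring a second pass through the discrete-residual construction.
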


By a \emph{simple} group, we mean a \emph{topologically simple}
group, \emph{i.e.} a group all of whose Hausdorff quotients are
trivial. Since a cocompact closed subgroup of a compactly generated
locally compact group is itself compactly
generated~\cite{Macbeath-Swierczkowski59},
it follows that the $n$ simple quotients
appearing in (iii) of Theorem~\ref{thm:structure:first} are
compactly generated. (We always implicitly endow quotient groups with the quotient topology.)

\medskip
The above theorem describes the \emph{upper structure} of $G$. The first
alternative can be made more precise in combination with the
well-known (and easy to establish) fact that an infinite finitely
generated group either admits an infinite residually finite quotient
or has a finite index subgroup which admits an infinite simple
quotient. In some sense, Theorem~\ref{thm:structure:first} plays the
r\^ole of a non-discrete analogue of the latter fact; notice however
that the finiteness of the number $n$ of simple sub-quotients in
case (iii) above is  particular to non-discrete groups. It turns out
that the above theorem is supplemented by the following
description of the \emph{lower structure} of $G$, which does not seem to
have any analogue in the discrete case.

\begin{ithm}\label{thm:structure:lower}
Let $G$ be a compactly generated locally compact group. Then one of
the following holds.
\begin{enumerate}
\item $G$ has an infinite discrete normal subgroup.

\item $G$ has a non-trivial closed normal subgroup which is
compact-by-\{connected soluble\}.

\item $G$ has exactly $n$ non-trivial minimal closed normal subgroups,
where $0<n<\infty$.
\end{enumerate}
\end{ithm}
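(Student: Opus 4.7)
The plan is to assume that (i) and (ii) both fail and prove (iii). Writing $\mathscr{M}$ for the family of non-trivial minimal closed normal subgroups of $G$, the task is to establish $1 \leq |\mathscr{M}| < \infty$. Failure of (ii) already rules out any non-trivial compact normal subgroup, since such a subgroup is trivially compact-by-\{connected soluble\} with trivial soluble part.

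To tame the connected component, I would apply Montgomery--Zippin: the largest compact normal subgroup of $G^\circ$ is characteristic in $G^\circ$, hence normal in $G$, hence trivial; thus $G^\circ$ is a connected Lie group. Its soluble radical is characteristic, and if non-trivial would yield conclusion (ii); hence $G^\circ$ is semisimple. The centre $Z(G^\circ)$ is discrete and characteristic in $G^\circ$, thus a discrete normal subgroup of $G$, which is finite by the failure of (i) and then trivial by the absence of compact normal subgroups. Consequently $G^\circ$ is either trivial or a centreless semisimple Lie group, a direct product of finitely many simple Lie factors $S_1,\ldots,S_k$. In the latter case the $G$-conjugation orbits on $\{S_1,\ldots,S_k\}$ each span a minimal closed normal subgroup of $G$ inside $G^\circ$, so $\mathscr{M}\neq\emptyset$.

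Finiteness of $\mathscr{M}$ runs as follows. For $M\neq M'$ in $\mathscr{M}$, the intersection $M\cap M'$ is a closed normal subgroup of $G$ strictly contained in $M$, hence trivial by minimality, so $[M,M']\subseteq M\cap M'=1$. The family $\mathscr{M}$ therefore endows $\overline{\langle \mathscr{M}\rangle}$ with a topological quasi-product structure in the sense of the appendices. The key fact I expect to borrow from the appendix is that a compactly generated locally compact group cannot contain, as a closed normal subgroup, a quasi-product with infinitely many non-trivial factors; this forces $|\mathscr{M}|<\infty$.

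The main obstacle is the existence of a minimal closed normal subgroup when $G^\circ=1$, that is, when $G$ is totally disconnected. Here I would invoke Theorem A. Its alternative (ii) of Theorem A is vacuous in the tdlc setting, since a cocompact connected soluble normal subgroup would force $G$ itself to be compact and thus to be absorbed into the excluded conclusion (ii) of the present theorem. Alternative (i) of Theorem A produces an infinite discrete quotient $G\twoheadrightarrow Q$: one must then show that compact generation combined with the absence of infinite discrete normal subgroups forces the kernel to house a minimal closed normal subgroup of $G$, for instance by iterating Theorem A on the kernel or by analysing the descending chain of open normal subgroups defining $Q$. Alternative (iii) of Theorem A yields a cocompact normal subgroup with a non-empty finite set of non-compact simple quotients; pulling back their kernels gives closed normal subgroups of $G$ from which the desired minimal members of $\mathscr{M}$ can be extracted. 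Reconciling the infinite-discrete-quotient case with the standing hypothesis is the delicate technical point where the most work is to be done.
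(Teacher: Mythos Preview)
Your reduction of $G^\circ$ to a centreless semisimple Lie group is fine and matches the paper. The substantial gaps are in the totally disconnected part, both for existence and for finiteness of $\mathscr{M}$.

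\medskip

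\textbf{Existence.} You try to extract minimal closed normal subgroups from Theorem~A, but Theorem~A is about the \emph{upper} structure: it produces simple \emph{quotients}, equivalently \emph{maximal} closed normal subgroups. Pulling back kernels of simple quotients gives you large normal subgroups, not small ones; there is no mechanism here to descend to a minimal one. Likewise, in alternative~(i) of Theorem~A you obtain an open normal kernel, but nothing forces it (or any subgroup of it) to be minimal in $G$. Your own closing sentence concedes that this step is unresolved. The paper does not use Theorem~A at all here: it invokes Proposition~\ref{prop:MinimalNormal}, which rests on the Schreier-graph argument of Proposition~\ref{prop:FilterNormalSubgroups}. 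That proposition shows (after killing a small compact normal subgroup) that any \emph{filtering} family of non-discrete closed normal subgroups has non-trivial intersection; Zorn then yields a minimal element beneath every non-trivial closed normal subgroup. This is the missing idea.

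\medskip

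\textbf{Finiteness.} The appendix contains no statement of the form ``a compactly generated group cannot harbour a quasi-product with infinitely many factors''. The finiteness in the paper again comes from Proposition~\ref{prop:FilterNormalSubgroups}: one shows that the closures $\overline{M_{\mathscr{M}\setminus\mathscr{F}}}$, for $\mathscr{F}\subset\mathscr{M}$ finite, form a filtering family with trivial intersection, hence the family is finite. Your commutation observation $[M,M']=1$ is correct and is used in that argument, but by itself it does not bound $|\mathscr{M}|$.

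\medskip

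Finally, the paper does a bit more bookkeeping than your outline in the mixed case: it passes to a finite-index characteristic open subgroup $G^+\cong G^\circ\times D$, then enlarges $G^\circ$ to a subgroup $G_1$ so that $G/G_1$ has no non-trivial compact or discrete normal subgroup, applies Proposition~\ref{prop:MinimalNormal} to $G/G_1$, and sets up a bijection between the totally disconnected minimal normal subgroups of $G$ and those of $G/G_1$. This is needed because Proposition~\ref{prop:MinimalNormal} requires the absence of compact and discrete normal subgroups, which $G/G^\circ$ need not satisfy.
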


The normal subgroups appearing in the above have no reason to be
compactly generated in general. On another hand, since any
(Hausdorff) quotient of a compactly generated group is itself
compactly generated, it follows that
Theorem~\ref{thm:structure:lower} may be applied repeatedly to the
successive quotients that it provides. Such a process will of course
not terminate after finitely many steps in general. However, if $G$
satisfies additional finiteness conditions, this recursive process
may indeed reach an end in finite time. In order to make this
precise, we introduce the following terminology. We call a topological group
$G$ \textbf{Noetherian}\index{Noetherian group} if it satisfies the ascending
chain condition on open subgroups. Obvious examples are provided by
compact (\emph{e.g.} finite) groups, connected groups and polycyclic
groups. If $G$ is
locally compact, then $G$ is Noetherian if and only if every open
subgroup is compactly generated. (Warning: the notion introduced
in~\cite[\S\,III]{Guivarch73}
is more restrictive as it posits compact generation of all \emph{closed}
subgroups.)

\begin{ithm}\label{thm:CompositionSeries}
Let $G$ be a locally compact Noetherian group. Then $G$ possesses an
open normal subgroup $G_k$ and a finite series of closed subnormal subgroups
$$1 = G_0 \lhd G_1 \lhd G_2 \lhd \cdots \lhd G_k \lhd G$$
such that, for each $i \leq k$, the
subquotient $G_i/G_{i-1}$ is either compact, or isomorphic to $\ZZ$
or $\RR$, or topologically simple non-discrete and compactly generated.
\end{ithm}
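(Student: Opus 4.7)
I would proceed by induction, applying Theorem~B iteratively to strip off a non-trivial closed normal subgroup of $G$ that admits a finite subnormal series with subquotients among the four admitted types (compact, $\ZZ$, $\RR$, or topologically simple non-discrete compactly generated), and then passing to the Noetherian quotient. The Noetherian hypothesis plays a double role: it ensures that $G$ and each successor quotient is compactly generated, so Theorem~B is available, and via the ACC on open subgroups of $G$ it forces the recursion to terminate.

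\textbf{Connected case.} I would first handle $G$ connected. By the solution to Hilbert's fifth problem, $G$ admits a maximal compact normal subgroup $K$ with $G/K$ a connected Lie group; the Cartan--Levi decomposition of $G/K$ then supplies a finite subnormal series whose subquotients are compact, copies of $\RR$ (from the solvable radical), or simple connected Lie groups (from the Levi factor). Pulled back, this absorbs $K$ and yields the desired series for $G$.

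\textbf{General inductive step.} For general non-discrete Noetherian $G$, I apply Theorem~B. In Case~(ii), a non-trivial compact-by-(connected soluble) $N \lhd G$ is produced, whose compact kernel contributes a compact subquotient and whose connected soluble quotient is decomposed via the analysis of the previous paragraph. In Case~(iii) (finitely many minimal closed normal subgroups), pick one, $M$; it is characteristically simple, and the structural description of characteristically simple groups established earlier in the paper further decomposes $M$ into a finite subnormal series with constituents of the four admitted types (unpacking any quasi-product into its topologically simple factors). In Case~(i), an infinite discrete normal subgroup $D$ is produced; exploiting compact generation of $G$ (which forces every $G$-conjugacy class in $D$ to be finite) together with the ACC on open subgroups of $G$, one extracts a non-trivial closed $G$-normal subgroup of $D$ that is either finite or isomorphic to $\ZZ$.

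\textbf{Assembly and termination.} With a nice closed normal subgroup $N \lhd G$ in hand (equipped with its finite series), the quotient $G/N$ is again a Noetherian locally compact group (open subgroups of $G/N$ correspond to open subgroups of $G$ containing $N$), and by induction admits an open normal subgroup $\overline{G}_{k'}$ with a subnormal series of the desired form. Its preimage $G_k$ is open and normal in $G$, and concatenating the series of $N$ with the pullback of the series of $\overline{G}_{k'}$ yields the required series for $G$. The principal technical obstacle is guaranteeing termination of the recursion: one must arrange the choice of $N$ at each stage so that the resulting open subgroup $G_k$ \emph{strictly} grows, whereupon the ACC on open subgroups of $G$ caps the recursion depth at a finite number. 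A secondary but real difficulty is the extraction of a finite or cyclic $G$-normal subgroup inside the infinite discrete normal subgroup of Case~(i), which requires a careful interplay between compact generation of $G$ and the Noetherian hypothesis.
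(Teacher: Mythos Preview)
Your plan has the right shape, but the two difficulties you flag at the end are genuine gaps, not technicalities, and the paper resolves them by a different route.

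\textbf{Termination.} You never actually give a well-founded measure for the recursion. The closed normal subgroups $N$ extracted from Theorem~B are typically \emph{not} open, so quotienting by them does not consume any open subgroup of $G$; there is no reason the preimage $G_k$ should strictly grow from one stage to the next, and hence no way to invoke the ACC on open subgroups of $G$. The paper proceeds differently: it first isolates the discrete residual $H=\Res(G)$, which by Theorem~F is cocompact and has no non-trivial discrete quotient, and then builds an \emph{ascending} chain $1=H_0<H_1<\cdots$ of subgroups normal in an open characteristic $O\supseteq H$. Termination is proved by observing that $H_\infty=\overline{\bigcup H_i}$ is compactly generated (being normal in the Noetherian group $O$), so some $H_k$ is already cocompact in $H_\infty$; a short argument with the LF-radical (property~(a) of the construction) then forces $H_\infty\subseteq H_{k+1}$.

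\textbf{Case~(i).} Your claim that compact generation of $G$ forces $G$-conjugacy classes in a discrete normal subgroup $D$ to be finite is false: in $G=\ZZ^2\rtimes_A\ZZ$ with $A\in\SL_2(\ZZ)$ hyperbolic, the $G$-orbits in $D=\ZZ^2$ are infinite, and $D$ contains no non-trivial $G$-normal subgroup that is finite or infinite cyclic. More generally, there is no obvious reason a discrete normal subgroup of a Noetherian locally compact group should be polycyclic-by-finite. The paper sidesteps this entirely: since $H=\Res(G)$ has no non-trivial discrete quotient, every discrete normal subgroup of $H/H_{j-1}$ is centralised by $H/H_{j-1}$ and hence is finitely generated abelian, so the construction never encounters an intractable discrete piece. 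The discrete top $G/O$ is simply absorbed into the final step $G_k\lhd G$ with $G_k$ open.

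\textbf{A smaller point.} Corollary~D applies only to \emph{compactly generated} characteristically simple groups, and a minimal closed normal subgroup need not be compactly generated a priori. The paper handles this by first establishing that in a totally disconnected Noetherian group, every closed normal subgroup of any open subgroup is compactly generated (via the $NU$ trick with $U$ compact open); you should record this before invoking the structure of characteristically simple groups.
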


In the special case of connected groups, the existence of the above
decomposition follows easily from the solution to Hilbert's fifth
problem. In that case, the simple subquotients are connected
non-compact adjoint simple Lie group, while the presence of discrete
free Abelian groups accounts for possible central extensions of
simple Lie groups such as $\widetilde{\mathrm{SL}_2(\RR)}$ (an example
going back
to Schreier~\cite[\S\,5 Beispiel~2]{Schreier25}).
No analogue of Theorem~\ref{thm:CompositionSeries} seems to be known
for \emph{discrete} Noetherian groups.

\subsection*{Characteristically simple groups and quasi-products}

By a \textbf{characteristic subgroup}\index{characteristic subgroup}
of a topological group $G$, we
mean a closed subgroup which is preserved by every topological group
automorphism of $G$. A group admitting no non-trivial such subgroup is
called \textbf{characteristically simple}\index{characteristically simple}.
This property is satisfied by any \emph{minimal}
normal subgroup, for instance those occurring in
Theorem~\ref{thm:structure:lower}.

In fact, our above results lead also to a description of
characteristically simple groups,
as follows.

\begin{icor}\label{cor:CharSimple}
Let $G$ be a compactly generated locally compact group. If $G$ is
characteristically simple, then one of the following  holds.
\begin{enumerate}
\item $G$ is discrete.

\item $G$ is compact.

\item $G \cong \RR^n$ for some $n$.

\item $G$ is a quasi-product with topologically simple pairwise
isomorphic quasi-factors.
\end{enumerate}
\end{icor}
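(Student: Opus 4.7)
The plan is to combine the structure theorems just established with the fact that many canonical subgroups (identity component, soluble radical, centre, quasi-centre, locally elliptic radical) are characteristic, so that characteristic simplicity forces each such subgroup to be trivial or the whole of $G$.

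The identity component $G^\circ$ is characteristic, so $G$ is either connected or totally disconnected. In the connected case, the Gleason--Yamabe theorem reduces matters to Lie theory: the maximal compact normal subgroup, the soluble radical, the centre and the connected semisimple part are all characteristic, and a short case analysis yields one of alternatives (ii), (iii) or (iv). In particular, a connected centreless semisimple Lie group is a direct product of simple adjoint Lie factors, and characteristic simplicity constrains these factors to be pairwise isomorphic, giving alternative (iv).

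Assume now that $G$ is totally disconnected and apply Theorem~\ref{thm:structure:lower}. In its first alternative, $G$ has an infinite discrete normal subgroup $N$; since the conjugation action on the discrete group $N$ is continuous, every element of $N$ has open centraliser, so the characteristic subgroup $\QZ(G)$ is non-trivial and thus equals $G$. The structure theory of compactly generated totally disconnected groups with dense quasi-centre (developed earlier in the paper) then forces $G$ to be discrete, giving alternative (i). In the second alternative, $G^\circ = 1$ turns the provided subgroup into a non-trivial compact normal subgroup, whence the characteristic subgroup $\LF(G)$ is non-trivial and so equal to $G$; being compactly generated and locally elliptic, $G$ is compact, and alternative (ii) holds.

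In the third alternative of Theorem~\ref{thm:structure:lower}, $G$ admits $0<n<\infty$ minimal closed normal subgroups $M_1,\dots,M_n$. Pairwise distinct $M_i,M_j$ intersect trivially by minimality and so commute; the closure of $M_1 M_2\cdots M_n$ is invariant under $\Aut(G)$ (which merely permutes the factors in a product that is symmetric by commutativity), hence characteristic, hence equal to $G$. This realises $G$ as a quasi-product of the $M_i$ in the sense of the appendix. Moreover, the $\Aut(G)$-action on $\{M_1,\dots,M_n\}$ must be transitive, since otherwise the closed subgroup generated by a proper orbit would be a proper non-trivial characteristic subgroup; hence the quasi-factors are pairwise isomorphic. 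The main obstacle is the final step, namely showing that each $M_i$ is topologically simple rather than merely minimal as a normal subgroup of $G$: a closed normal subgroup of $M_i$ need not be normal in $G$, so the minimality of $M_i$ alone is insufficient, and topological simplicity must be extracted from the finer results on minimal closed normal subgroups and their centralisers developed earlier in the paper.
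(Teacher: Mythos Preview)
Your overall strategy is close to the paper's: exploit characteristic subgroups (identity component, LF-radical, quasi-centre) to reduce to the situation where $G$ is totally disconnected with finitely many minimal closed normal subgroups $M_1,\dots,M_n$ whose product is dense and on which $\Aut(G)$ acts transitively. The paper organises this a bit differently---it first uses the discrete residual $\Res(G)$ together with Corollary~\ref{cor:ResDiscrete} to dispose of the discrete and compact cases, thereby securing $\QZ(G)=1$ and $\LF(G)=1$, and then invokes Proposition~\ref{prop:MinimalNormal} directly rather than Theorem~\ref{thm:structure:lower}. One concrete payoff of the paper's route is that Proposition~\ref{prop:MinimalNormal} already contains the statement that $\overline{\la M : M\in\mathscr E\ra}\subsetneq G$ for every proper $\mathscr E\subset\mathscr M$, which you assert without justification in your transitivity argument.

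The real issue, however, is that you stop short of proving that each $M_i$ is topologically simple: you call it ``the main obstacle'' and defer to unspecified ``finer results on centralisers''. No such results are needed; the missing step is a one-liner using only what you have already established. If $M'$ is a closed normal subgroup of $M_i$, then every $M_j$ with $j\neq i$ centralises $M'$ (since $[M_i,M_j]=1$), while $M_i$ normalises $M'$; hence the dense subgroup $M_1\cdots M_n$ normalises the closed subgroup $M'$, so $M'\lhd G$. Minimality of $M_i$ in $G$ then forces $M'\in\{1,M_i\}$. This is exactly the argument the paper imports from the proof of Theorem~\ref{thm:monolithic}. The same density trick also fills your transitivity gap: if a proper $\Aut(G)$-orbit $\mathscr E$ generated a dense subgroup, any $M_k\notin\mathscr E$ would centralise it and hence be central, whence $\centra(G)=G$ and $G$ would be abelian---which is readily excluded for a non-discrete, non-compact, totally disconnected, compactly generated group.
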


By definition, a topological group $G$ is called a
\textbf{quasi-product}\index{quasi-product} with
\textbf{quasi-factors}\index{quasi-factor!of a quasi-product}
$N_1,  \dots, N_p$ if $N_i$ are closed normal subgroups such that the
multiplication map
$$N_1\times\cdots\times N_p \lra G$$
is injective with dense image. Usual direct products are obvious
examples, but the situation is much more complicated for general totally
disconnected groups. The above definition degenerates in the commutative
case; for instance, $\RR$ is a quasi-product with quasi-factors $\ZZ$
and $\sqrt 2\ZZ$. Several \emph{centrefree} examples of quasi-products,
including characteristically simple ones, will be constructed in
Appendix~\ref{app:B}. However, as of today we are not aware of any
\emph{compactly generated} characteristically simple group which falls
into Case~(iv) of Corollary~\ref{cor:CharSimple} without being a genuine
direct product. As we shall see in Appendix~\ref{app:B}, the existence
of such an example is equivalent to the existence of a compactly
generated topologically simple locally compact group admitting a proper
dense normal subgroup.

\subsection*{Groups with every proper quotient compact}
The first goal that we shall pursue in this article is to describe
the compactly generated locally compact groups which admit only
compact proper quotients. The non-compact groups satisfying this
condition are sometimes called \textbf{just-non-compact}\index{just-non-compact}.
In the discrete case, the corresponding notion is that of
\textbf{just-infinite groups}\index{just-infinite}, namely discrete groups all of whose
proper quotients are finite. A description of these was given by
J.~S.~Wilson in a classical article (\cite{Wilson71},
Proposition~1). Anticipating on the terminology introduced below, we
can epitomise our contribution to this question as follows:

\medskip
\itshape A just-non-compact group is either discrete or monolithic.
\upshape
\medskip

The most obvious case where a topological group has only compact
quotients is when it is \textbf{quasi-simple}\index{quasi-simple},
which means that it possesses a cocompact normal subgroup which is
topologically simple and contained in every
non-trivial closed normal subgroup.

This situation extends readily to the following. We say that a
topological group is \textbf{monolithic}\index{monolithic} with
\textbf{monolith}~$L$\index{monolith}
if the intersection of all non-trivial closed normal subgroups is
itself a \emph{non-trivial} group~$L$. Non-quasi-simple examples are
provided by the standard wreath product of a topologically simple
group by a finite transitive permutation group (see Construction~1
in~\cite{Wilson71}). Notice that the monolith is necessarily
characteristically simple.

\bigskip

In the discrete case, groups with only finite proper quotients can be
very far from monolithic, indeed
residually finite: examples are provided by all lattices in connected
centrefree simple Lie groups of rank at
least two in view of a fundamental theorem of
G.~Margulis~\cite{Margulis}; for instance, $\mathbf{PSL}_3(\ZZ)$
(this particular case was already known to J.~Mennicke~\cite{Mennicke65}).
The following result shows that such examples do not exist in the
non-discrete case.

\begin{ithm}\label{thm:monolithic}
Let $G$ be a compactly generated non-compact locally compact group such
that every
non-trivial closed normal subgroup is cocompact. Then one of the
following holds.
\begin{enumerate}
\item $G$ is monolithic and its monolith is a quasi-product with
finitely many isomorphic topologically simple groups as quasi-factors.

\item $G$ is monolithic with monolith~$L \cong \RR^n$. Moreover $G/L$ is
isomorphic to a closed irreducible
subgroup of $\mathbf{O}(n)$. In particular $G$ is an almost connected
Lie group.

\item $G$ is discrete and residually finite.
\end{enumerate}
\end{ithm}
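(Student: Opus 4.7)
My plan is to treat the discrete and non-discrete cases separately and, in the non-discrete case, to produce a monolith $L$ of $G$ and then invoke Corollary~\ref{cor:CharSimple} to identify its structure. If $G$ is discrete then it is just-infinite, and the classical dichotomy of J.~S.~Wilson~\cite{Wilson71} yields either case~(iii) (residually finite) or case~(i) (monolith a direct product of isomorphic simple groups, which is in particular a quasi-product). For the remainder, I assume that $G$ is non-discrete; since a cocompact kernel produces a compact quotient, $G$ then admits no infinite discrete quotient.

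Two guiding principles drive the monolith analysis in the non-discrete case. First, $G$ has no non-trivial compact closed normal subgroup, for compact plus cocompact would make $G$ compact, contradicting the hypothesis. Second, $G$ has at most one minimal closed normal subgroup: two distinct such $L_{1}, L_{2}$ would satisfy $L_{1} \cap L_{2} = 1$ and hence commute, so that $L_{2}$ would embed continuously into the compact quotient $G/L_{1}$ and thereby be forced to be compact, contradicting the first principle. With these in hand, I would apply Theorem~\ref{thm:structure:lower}. Case~(iii) combined with uniqueness yields a monolith directly. Case~(ii) (a compact-by-\{connected soluble\} non-trivial closed normal subgroup) reduces, by the first principle, to a connected soluble subgroup, whose last non-trivial closed derived subgroup is then connected abelian with trivial maximal compact part, hence isomorphic to some $\RR^n$ and a minimal closed normal subgroup of $G$---again the monolith. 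Case~(i) (an infinite discrete normal subgroup $D$) must be shown to force $G$ itself to be discrete, contradicting the standing assumption: since $D$ is finitely generated (being cocompact in the compactly generated $G$), $\Aut(D)$ is discrete in the natural topology and $C_{G}(D)$ is open in $G$; a further analysis via van~Dantzig's theorem and just-non-compactness then forces $G$ itself to be discrete.

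With the monolith $L$ now in hand, it is characteristically simple (a characteristic subgroup of $L$ is closed normal in $G$ and contained in $L$, hence trivial or $L$), cocompact, and compactly generated~\cite{Macbeath-Swierczkowski59}. Corollary~\ref{cor:CharSimple} then leaves four options for $L$: discrete or compact $L$ would reduce to the excluded cases above; $L \cong \RR^n$ falls under case~(ii); and $L$ a quasi-product of pairwise isomorphic topologically simple groups falls under case~(i). For case~(ii), the conjugation map $G \to \Aut(\RR^n) = \GL_{n}(\RR)$ factors through the compact $G/L$ (since $L$ is abelian) and hence lands in a conjugate of the maximal compact $\mathbf{O}(n)$; the $G$-action is irreducible since a proper $G$-invariant subspace $V$ would be cocompact by just-non-compactness while $\RR^n/V$ would be a non-trivial vector group embedded in the compact $G/V$, which is impossible. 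Almost-connected Lie then follows because $L \subseteq G^{\circ}$ is cocompact and the maximal compact normal subgroup of $G^{\circ}$ is characteristic, hence normal in $G$, hence trivial---so Hilbert's fifth problem produces a Lie structure on $G^{\circ}$.

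The main obstacle is case~(i) of Theorem~\ref{thm:structure:lower}: showing that a compactly generated, just-non-compact, non-compact group with an infinite discrete normal subgroup must itself be discrete. This requires a delicate interplay between compact generation, just-non-compactness, and the structure of totally disconnected LC groups (van~Dantzig, together with the absence of cocompact compact normal subgroups in the just-non-compact non-compact setting).
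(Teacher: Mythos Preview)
Your overall strategy—produce a monolith in the non-discrete case and then invoke Corollary~\ref{cor:CharSimple}—is different from the paper's proof, which argues directly by separating the totally disconnected case (via Proposition~\ref{prop:FilterNormalSubgroups}) from the almost connected case (via Lie theory), and which does not use Theorem~\ref{thm:structure:lower} or Corollary~\ref{cor:CharSimple} at all. In fact, in the paper's logical order the proof of Corollary~\ref{cor:CharSimple} explicitly appeals to ``the proof of Theorem~\ref{thm:monolithic}'' for its final step, so your use of it here is circular as written. That step (showing minimal closed normal subgroups of a quasi-product are topologically simple) is short and self-contained, so you could extract it; but you should not cite Corollary~\ref{cor:CharSimple} as a black box.

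More seriously, your handling of case~(i) of Theorem~\ref{thm:structure:lower} contains a genuine error. You claim that an infinite discrete normal subgroup forces $G$ to be discrete, and sketch an argument ``via van~Dantzig's theorem and just-non-compactness''. This is false: take $G=\RR^2\rtimes\ZZ/3$ with $\ZZ/3$ acting by rotation through $2\pi/3$. This group is compactly generated, non-compact, non-discrete, just-non-compact (the only proper non-trivial closed normal subgroups are $\RR^2$ and the $\ZZ/3$-invariant full-rank lattices, all cocompact), and the hexagonal lattice is an infinite discrete normal subgroup. Van~Dantzig cannot help here since $G$ is not totally disconnected. What is actually true is Proposition~\ref{prop:discrete_normal}: an infinite discrete normal subgroup forces $G$ to be discrete \emph{or} $\RR^n$-by-finite. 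The paper avoids this pitfall by splitting off the case $G^\circ\neq 1$ at the outset; in the totally disconnected case the $\RR^n$-by-finite alternative is vacuous and Proposition~\ref{prop:discrete_normal} does yield discreteness. You would need either to make the same initial split, or to route the $\RR^n$-by-finite outcome of Proposition~\ref{prop:discrete_normal} into your case-(ii) analysis rather than claiming a contradiction.
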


We shall see in \S\,\ref{sec:BN} below that this theorem yields a
topological simplicity corollary for locally
compact groups endowed with a $BN$-pair which supplements classical
results by J.~Tits~\cite{Ti64}.

\medskip

The proof of Theorem~\ref{thm:monolithic} relies on an analysis of
filtering families of closed normal subgroups in totally
disconnected locally compact groups, which is carried out in
Proposition~\ref{prop:FilterNormalSubgroups} below. As a by-product,
it yields in particular a characterisation of residually discrete
groups (see Corollary~\ref{cor:ResDiscrete}) and the following
easier companion to Theorem~\ref{thm:monolithic}, where $\Res(G)$
denotes the \textbf{discrete residual}\index{discrete residual} of $G$, namely the
intersection of all open normal subgroups.

\begin{ithm}\label{thm:OpenNormal}
Let $G$ be a compactly generated locally compact group all of whose
discrete quotients are finite.
Then $\Res(G)$ is a cocompact characteristic closed subgroup of $G$
without non-trivial discrete quotient.
\end{ithm}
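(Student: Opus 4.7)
The plan is as follows. The subgroup $\Res(G)$ is closed and characteristic by inspection of its definition, since every topological automorphism of $G$ permutes the family of open normal subgroups. Setting $Q := G/\Res(G)$, the quotient is compactly generated, locally compact, and residually discrete; as every discrete quotient of $Q$ is a fortiori a discrete quotient of $G$, by hypothesis each is finite, equivalently every open normal subgroup of $Q$ has finite index.

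For the cocompactness assertion I invoke Proposition~\ref{prop:FilterNormalSubgroups} on filtering families of closed normal subgroups, whose content in this setting is that in a residually discrete locally compact group the open normal subgroups form a basis of identity neighborhoods; together with the finite-index property this identifies $Q$ with its profinite completion, making $Q$ compact.

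For the claim that $H := \Res(G)$ admits no non-trivial discrete quotient I argue by contradiction. Suppose $N \lhd H$ is a proper open normal subgroup. Writing $G = CH$ with $C$ compact, the tube lemma applied to the continuous conjugation $G \times H \to H$ shows that $M := \bigcap_{g \in G} g N g^{-1}$ is open in $H$; it is $G$-invariant, proper in $H$, and, since $H$ is compactly generated by Macbeath--\'Swierczkowski, $H/M$ is finitely generated discrete. The conjugation homomorphism $\phi : G \to \Aut(H/M)$ into the countable discrete group $\Aut(H/M)$ has open kernel (a finite intersection of open point stabilisers of a generating set), and its image is a finitely generated discrete quotient of $G$, so finite by hypothesis. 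Hence $\ker\phi$ is open of finite index in $G$, contains $\Res(G) = H$, and forces $H$ to centralise $H/M$; in particular $H/M$ is abelian.

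Two subcases remain, with the second as the main obstacle. If $H/M$ is finite, then $G/M$ is compact, and the identity $\Res(G/M) = H/M$ (which holds because every open normal subgroup of $G$ contains $\Res(G) \supseteq M$) combined with the fact that the residual of a compact group equals its identity component forces the finite $H/M$ to be connected, hence trivial, a contradiction. Otherwise, writing $H/M \cong \ZZ^r \oplus T$ with $T$ the characteristic finite torsion subgroup and $r \geq 1$, let $M'$ be the preimage of $T$ under $H \to H/M$, so that $M' \lhd H$ is $G$-invariant and $H/M' \cong \ZZ^r$; replacing $G$ by the open finite-index normal subgroup $G^* := \ker(G \to \GL_r(\ZZ))$---which retains the hypothesis via a standard normal-core argument---we may assume that $\ZZ^r$ is central in $G/M'$, and $G/M'$ is totally disconnected since $G^0 \subseteq H$ maps trivially to the discrete $\ZZ^r$. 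Pick a compact open subgroup $K$ of $G/M'$ via van~Dantzig; then $K \cap \ZZ^r = \{0\}$ by compactness and discrete torsion-freeness, so $K\ZZ^r \cong K \times \ZZ^r$ is an open subgroup whose image in the compact $G/H$ is open, hence of finite index; thus $K\ZZ^r$ has finite index in $G/M'$. Setting $K' := \bigcap_i g_i K g_i^{-1}$ over coset representatives $\{g_i\}$ of $K\ZZ^r$, the centrality of $\ZZ^r$ implies that conjugation by elements of $K\times\ZZ^r$ preserves the direct factor $K$ and that left translation by any $g$ permutes the $g_i$ modulo $K\ZZ^r$; this shows $K'$ is a compact open \emph{normal} subgroup of $G/M'$. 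The discrete quotient $(G/M')/K'$ then contains $K'\ZZ^r/K' \cong \ZZ^r$ and is therefore infinite---an infinite discrete quotient of $G^*$, and hence (via normal core) of $G$, contradicting the hypothesis.
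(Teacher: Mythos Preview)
Your proof is correct and takes a genuinely different route from the paper for the second assertion (that $H=\Res(G)$ has no non-trivial discrete quotient). One minor citation point: what you invoke for cocompactness is really Corollary~\ref{cor:ResDiscrete} (a consequence of Proposition~\ref{prop:FilterNormalSubgroups}), which gives a basis of \emph{compact} open normal subgroups; the finite-discrete-quotient hypothesis then makes $G/\Res(G)$ profinite, exactly as in the paper.

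For the absence of non-trivial discrete quotients, the paper sets $R_1=\Res(H)$ and shows $G/R_1$ is profinite, whence $R_1=H$. The key tools are the LF-radical and U{\v{s}}akov's theorem (Theorem~\ref{thm:Ushakov}): since $H/R_1$ is residually discrete, Corollary~\ref{cor:ResDiscrete} makes $\LF(H/R_1)$ open; passing to its preimage $L$, the discrete finitely generated group $H/L$ has open centraliser $Z$ in $G/L$, necessarily of finite index, and U{\v{s}}akov then forces $Z$ to be compact-by-$\ZZ^m$, yielding a compact open characteristic subgroup and hence cocompactness of $R_1$. Your argument is more direct and avoids both the LF-radical and U{\v{s}}akov: you take the $G$-normal core $M$ of a hypothetical proper open normal subgroup of $H$ via the tube lemma, show $H/M$ is abelian by the same open-centraliser trick, and then split into the finite case (handled by $\Res(G/M)=(G/M)^\circ$ for compact $G/M$) and the infinite case (where a central $\ZZ^r$ together with van~Dantzig and a finite normal-core produces a compact open normal subgroup of $G^*/M'$, hence an infinite discrete quotient). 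The paper's route is shorter and more structural; yours is more elementary and self-contained, at the cost of a longer case analysis and the bookkeeping involved in passing to the finite-index subgroup $G^*$.
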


The cocompact subgroup $\Res(G)$ is compactly generated (see
\cite{Macbeath-Swierczkowski59} or Lemma~\ref{lem:cocompact} below).
Since any compact totally
disconnected group is a profinite group and, hence, admits numerous
discrete quotients, it follows that, under
the hypotheses of Theorem~\ref{thm:OpenNormal},  \emph{any compact
quotient of the discrete residual is
connected}. Loosely speaking, the discrete residual is thus a sort of
\emph{cocompact core} of the group $G$. As
a consequence, we obtain the following.

\begin{icor}\label{cor:MaxCptQuotient}
Let $G$ be a compactly generated totally disconnected locally
compact group all of whose discrete quotients are finite. Then the
discrete residual of $G$ is cocompact and admits no non-trivial
discrete or compact quotient. \qed
\end{icor}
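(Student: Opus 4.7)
The plan is to obtain Corollary~\ref{cor:MaxCptQuotient} as a short supplement to Theorem~\ref{thm:OpenNormal}. Applied directly to $G$, that theorem already gives two of the three required assertions: $\Res(G)$ is a cocompact characteristic closed subgroup of $G$, and it admits no non-trivial discrete quotient. It therefore only remains to check that $\Res(G)$ admits no non-trivial compact quotient.

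Since $G$ is totally disconnected, every closed subgroup of $G$ is totally disconnected; in particular so is $\Res(G)$. The key auxiliary step is then the following observation: every Hausdorff quotient $H/N$ of a totally disconnected locally compact group $H$ is again totally disconnected. This is essentially immediate from van~Dantzig's theorem, since $H$ admits a neighborhood basis of the identity consisting of compact open subgroups $U$, and for each such $U$ the set $UN/N$ is a compact open subgroup of $H/N$; hence $H/N$ inherits such a basis at the identity and is therefore totally disconnected.

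Granted this, let $Q$ be an arbitrary compact quotient of $\Res(G)$. Then $Q$ is compact and totally disconnected, hence profinite. If $Q$ were non-trivial, it would admit a non-trivial finite, and in particular discrete, quotient, which would pull back to a non-trivial discrete quotient of $\Res(G)$~--- contradicting Theorem~\ref{thm:OpenNormal}. Hence $Q$ is trivial, as required. The only point requiring justification beyond the quoted theorem is the auxiliary observation that quotients of totally disconnected locally compact groups remain totally disconnected, which is a direct consequence of van~Dantzig; I therefore anticipate no genuine obstacle.
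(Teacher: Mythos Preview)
Your proof is correct and follows essentially the same route as the paper: both deduce cocompactness and the absence of discrete quotients directly from Theorem~\ref{thm:OpenNormal}, and then argue that any compact quotient of $\Res(G)$, being a quotient of a totally disconnected locally compact group, is profinite and would therefore produce a forbidden non-trivial discrete quotient. The paper phrases the last step slightly differently (first noting that any compact quotient of $\Res(G)$ must be connected, then invoking total disconnectedness of $G$), but the content is identical.
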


Theorem~\ref{thm:structure:first} follows easily by combining
Theorem~\ref{thm:monolithic} with the fact, due to R.~Grigorchuk and
G.~Willis, that any non-compact compactly generated locally compact
group admits a just-non-compact quotient (unpublished, see
Proposition~\ref{prop:GriWi} below).

\subsection*{Acknowledgements}
We are very grateful for the detailed comments provided by the anonymous
referee.


\section{Basic tools}

\subsection*{Generalities on locally compact groups}

In this preliminary section, we collect a number of subsidiary facts
on locally compact groups which will be used repeatedly in the
sequel. Unless specified otherwise, all topological groups are assumed Hausdorff.

We will frequently invoke the following well-known statement
without explicit reference.

\begin{lem}\label{lem:cocompact}
If a closed subgroup of a compactly generated locally compact group is
cocompact, then it is itself compactly generated.
\end{lem}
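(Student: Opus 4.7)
The plan is to produce an explicit compact generating set for $H$ by telescoping a factorisation of arbitrary elements of $H$ through a compact transversal for $G/H$.

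First I would replace a given compact generating set by a symmetric compact identity neighbourhood $U$ with $G = \bigcup_{n \geq 1} U^n$ (this is harmless: if $S$ generates $G$ and $V$ is any compact symmetric identity neighbourhood, then $U := S \cup S\inv \cup V$ still generates and is a neighbourhood of $e$). By cocompactness of $H$ pick a compact set $C \se G$ with $e \in C$ and $G = HC$. I would then set
$$F \;:=\; H \cap C\, U\, C\inv,$$
which is compact (closed in a compact set), and claim that $F$ generates $H$.

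To prove the claim, fix $h \in H$ and write $h = u_1 u_2 \cdots u_n$ with each $u_i \in U$. For $0 \leq k \leq n$ let $h_k := u_1 \cdots u_k$ and factor $h_k = f_k c_k$ with $f_k \in H$, $c_k \in C$, starting from $f_0 = c_0 = e$. Then
$$f_k\inv f_{k+1} \;=\; f_k\inv h_k u_{k+1} c_{k+1}\inv \;=\; c_k\, u_{k+1}\, c_{k+1}\inv \;\in\; H \cap C U C\inv \;=\; F,$$
so each $f_k$, and in particular $f_n$, is a product of elements of $F$. Finally $c_n = f_n\inv h \in H \cap C \se F$ (the last inclusion holds because $e \in U \cap C\inv$), so $h = f_n c_n$ is itself a product of elements of $F$, as desired.

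The only genuinely delicate point is the bookkeeping at the endpoints of the telescoping: one needs $e \in C$ (so that one can start the recursion with $c_0 = e$) and $e \in U$ (so that the residual factor $c_n$ lies in $F$ rather than merely in $H \cap C$). Both are arranged by the initial normalisations, after which the argument is essentially formal.
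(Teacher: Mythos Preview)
Your argument is correct: the telescoping through a compact transversal is the standard direct proof of this fact, and your bookkeeping at the endpoints ($e\in C$, $e\in U$) is handled properly. The paper itself does not give a proof but simply refers to~\cite{Macbeath-Swierczkowski59}, Corollary~2; your proposal thus supplies a self-contained argument where the paper only cites, and is essentially the classical proof behind that reference.
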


\begin{proof}
See~\cite{Macbeath-Swierczkowski59}, Corollary~2.
\end{proof}

We say that a subgroup $H$ of a topological group $G$ is
\textbf{topologically locally finite}\index{topologically locally finite} if every finite
subset of $H$ is contained in a compact subgroup of $G$. Any locally
compact group $G$ possesses a maximal
normal topologically locally finite subgroup which is closed and called
the \textbf{LF-radical}\index{LF-radical}\index{radical!LF} and denoted $\LF(G)$; another
important fact is that any compact subset of a locally compact
topologically locally finite group is contained
in a compact subgroup. We refer to~\cite{Platonov}
and~\cite[\S\,2]{CapraceTD} for details.

It is well known that the LF-radical is compact for connected groups:

\begin{lem}\label{lem:LF:connected}
Every connected locally compact group admits a maximal compact normal
subgroup.
Moreover, the corresponding quotient is a connected Lie group.
\end{lem}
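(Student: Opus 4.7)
The plan is to reduce to the case of connected Lie groups via the Gleason--Yamabe solution to Hilbert's fifth problem, and then to invoke the classical Cartan--Iwasawa conjugacy theorem for maximal compact subgroups.

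First, I would apply the Yamabe approximation theorem (as in Montgomery--Zippin, 4.6): since $G$ is connected and locally compact, there exists a compact normal subgroup $N \lhd G$ such that $G/N$ is a Lie group, necessarily connected as a continuous image of $G$. This is the one substantive external input the argument will rely on.

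Next, the lemma reduces to its analogue for $G/N$. Given any compact normal $K \lhd G$, the product $KN$ is again a compact normal subgroup (it is the image of the compact set $K \times N$ under the multiplication map, and is a subgroup because both factors are normal) and it contains $N$. So every compact normal subgroup of $G$ is dominated by one containing $N$, and the compact normal subgroups of $G$ containing $N$ correspond bijectively to the compact normal subgroups of the connected Lie group $G/N$. It is therefore enough to exhibit a maximal compact normal subgroup $W \lhd G/N$: pulling it back yields a compact normal subgroup of $G$ containing every other, and the resulting quotient is $\cong (G/N)/W$, a quotient of a Lie group by a closed normal subgroup, hence itself a connected Lie group.

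Finally, for a connected Lie group $L$, I would invoke the classical fact that maximal compact subgroups of $L$ exist, are pairwise conjugate, and contain every compact subgroup of $L$. Fixing a maximal compact $M \le L$, the set $W := \bigcap_{g \in L} gMg\inv$ is a closed normal subgroup contained in the compact group $M$, hence compact. Any compact normal $K \lhd L$ lies in some maximal compact subgroup by Cartan's theorem, and being normal it must then lie in every conjugate thereof, so $K \le W$. Thus $W$ is the maximal compact normal subgroup of $L$, as required.

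The main obstacle is really absorbed in the first step: all the depth sits in the Gleason--Yamabe approximation theorem, while everything afterwards is a short formal manoeuvre resting on the standard Lie-theoretic conjugacy of maximal compacts. No other serious difficulty arises.
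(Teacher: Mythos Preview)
Your proof is correct and follows exactly the same route as the paper: invoke the Gleason--Yamabe/Montgomery--Zippin theorem to obtain a compact normal subgroup with Lie quotient, and then appeal to the corresponding fact for connected Lie groups. The paper's proof is in fact just those two sentences, so your proposal is simply a careful expansion of the second step via the Cartan--Iwasawa conjugacy theorem.
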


\begin{proof}
The solution to Hilbert's fifth
problem~\cite[Theorem~4.6]{Montgomery-Zippin} provides a compact normal
subgroup
such that the quotient is a Lie group; now the statement follows from
the corresponding fact for connected Lie groups.
\end{proof}

As a further element of terminology, the \textbf{quasi-centre}\index{quasi-centre} of a
topological group $G$ is the subset $\QZ(G)$ consisting of all
those elements possessing an open centraliser. The subgroup $\QZ(G)$
is topologically characteristic in $G$, but need not be closed.
Since any element with a discrete conjugacy class possesses an open
centraliser, it follows that the quasi-centre contains all discrete
normal subgroups of $G$.

\medskip

We shall use the following result of U{\v{s}}akov, for which we recall that a topological group is called
\textbf{topologically $\overline{\text{FC}}$}\index{topologically FC-group@topologically $\overline{\text{FC}}$-group}\index{FC-group!topologically}
if every conjugacy class has compact closure.

\begin{thm}[U{\v{s}}akov~\cite{Ushakov}]\label{thm:Ushakov}
Let $G$ be a locally compact topologically $\overline{\text{FC}}$-group. Then the 
union of all compact subgroups of $G$ forms a closed normal subgroup, which therefore coincides with
$\LF(G)$, and the corresponding quotient $G/\LF(G)$ is Abelian.

Moreover, if in addition $G$ is compactly generated, then $G$ is compact-by-Abelian.
More precisely, $\LF(G)$ is compact and $G/\LF(G)$ is isomorphic to $\RR^n\times \ZZ^m$ for some $n,m\geq 0$.\qed
\end{thm}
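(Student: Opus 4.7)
The proof splits naturally into two parts: first establishing the general structural assertion under the $\overline{\mathrm{FC}}$ hypothesis alone, then refining it under compact generation.

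\textbf{Part A (general case).} Let $C\subseteq G$ denote the union of all compact subgroups; $C$ is normal since conjugation preserves compactness. The core of the argument is to show $C$ is a closed subgroup of $G$. The key technical input is a centraliser lemma: for each $g\in G$ the continuous map $G\to\overline{g^G}$, $x\mapsto xgx^{-1}$, factors as a continuous injection $G/Z_G(g)\hookrightarrow\overline{g^G}$; since the target is compact and the source locally compact, an open-mapping argument (applied to a $\sigma$-compact open subgroup of $G$) forces $Z_G(g)$ to be cocompact. Building on this cocompactness, one shows that for any $g\in C$ sitting in a compact subgroup $K$, the closed normal subgroup $N(g):=\overline{\langle g^G\rangle}$ is itself compact: $g^G\subseteq K^G$, and cocompactness of $Z_G(k)$ for $k\in K$ together with continuity of conjugation permits covering the compact set $\overline{g^G}$ by conjugates of $K$ inside a single compact subgroup. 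Hence for $g,h\in C$ the product $N(g)N(h)$ is a compact normal subgroup containing both $g$ and $h$, so $C$ is a subgroup; a parallel limit argument yields that $C$ is closed. By maximality, $C=\LF(G)$. For the abelianity of $G/\LF(G)$, one checks that the closure $\overline{[G,G]}$ of the commutator subgroup is compact: commutators $[a,b]=(aba^{-1})\cdot b^{-1}$ lie in the compact set $\overline{b^G}\cdot b^{-1}$, and an iterative argument using cocompactness of centralisers upgrades this pointwise bound into compactness of the closed commutator subgroup, whence $\overline{[G,G]}\subseteq\LF(G)$.

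\textbf{Part B (compactly generated case).} The quotient $G/\LF(G)$ is abelian, locally compact, and compactly generated, so by the classical structure theorem $G/\LF(G)\cong\RR^n\times\ZZ^m\times K$ for some compact abelian $K$ and integers $n,m\ge 0$. The preimage of $K$ in $G$ is an extension of the topologically locally finite group $\LF(G)$ by the compact group $K$, hence itself topologically locally finite (extensions of $\LF$-groups by compact groups remain $\LF$ in the LC setting), so by maximality of $\LF(G)$ this preimage equals $\LF(G)$ and $K=\{1\}$. For compactness of $\LF(G)$: choose a compact generating set $S$ of $G$ and finitely many lifts $g_1,\dots,g_{n+m}\in G$ of a topological generating set of $\RR^n\times\ZZ^m$, so that $G$ is topologically generated by $\{g_1,\dots,g_{n+m}\}\cup\LF(G)$. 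Each $\overline{g_i^G}$ is compact, and after multiplying the remaining elements of $S$ by suitable elements of $\LF(G)$, $\LF(G)$ is exhibited as the closed normal subgroup topologically generated by a compact set of elements each sitting inside a compact subgroup; Part A then forces $\LF(G)$ itself to be compact.

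\textbf{Main obstacle.} The heart of the proof is the compactness of each $N(g)$ for $g\in C$ in Part A: this is where the $\overline{\mathrm{FC}}$ hypothesis (as opposed to the a priori weaker assumption that every element lies in some compact subgroup) plays its decisive role, and from which both the subgroup property and the closedness of $C$ flow, as does the abelianity of the quotient. The compactness of $\LF(G)$ in Part B is a second, more structural, hurdle that is routine in spirit once Part A is in hand but requires careful bookkeeping in the reduction via the abelian structure theorem.
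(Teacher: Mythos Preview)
The paper does not prove this statement: Theorem~\ref{thm:Ushakov} is stated with attribution to U\v{s}akov~\cite{Ushakov} and closed with a \qed{} symbol immediately after the statement, with no proof given. It is invoked throughout the paper purely as a black box. There is therefore no ``paper's own proof'' against which to compare your attempt.

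As for your sketch on its own merits: the overall shape is reasonable and does track the classical route to U\v{s}akov's theorem (cocompactness of centralisers, compactness of $\overline{[G,G]}$, then the abelian structure theorem in the compactly generated case). However, two steps are underspecified. In Part~A, the claim that $N(g)=\overline{\langle g^G\rangle}$ is compact for $g$ lying in a compact subgroup $K$ is the crux, and ``covering $\overline{g^G}$ by conjugates of $K$ inside a single compact subgroup'' does not by itself explain why such a single compact subgroup exists; this is exactly where U\v{s}akov's argument does real work. In Part~B, the final sentence asserts that $\LF(G)$ is compact because it is ``topologically generated by a compact set of elements each sitting inside a compact subgroup''~--- but Part~A does not furnish that implication. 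A cleaner route is to first show directly that $\overline{[G,G]}$ is compact when $G$ is compactly generated (using that commutators with a fixed compact generating set lie in a compact set), then observe that the preimage in $G$ of the compact factor $K$ in $G/\overline{[G,G]}\cong\RR^n\times\ZZ^m\times K$ is compact-by-compact, hence compact, and must equal $\LF(G)$.
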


The following consideration provides a necessary (and sufficient)
condition for the group considered in Theorem~\ref{thm:monolithic}
to admit a non-trivial \emph{discrete} normal subgroup.

\begin{prop}\label{prop:discrete_normal}
Let $G$ be a compactly generated non-compact locally compact group such
that every
non-trivial closed normal subgroup is cocompact.
If $G$ admits a non-trivial discrete normal subgroup, then $G$ is either
discrete or $\RR^n$-by-finite.
\end{prop}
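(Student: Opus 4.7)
The plan is to reduce the statement to U{\v{s}}akov's theorem by producing a large open normal subgroup of $G$ that carries a cocompact central subgroup. Let $D$ be a non-trivial discrete normal subgroup. By hypothesis, $D$ is cocompact, and Lemma~\ref{lem:cocompact} makes $D$ compactly generated, hence finitely generated since it is discrete. Because the conjugation map $G \to D$, $g \mapsto g d g\inv$, is continuous with discrete target, the centralizer $C_G(d)$ is open for every $d \in D$. Fixing a finite generating set $d_1, \dots, d_k$ of $D$, I would set
\[
H \;:=\; C_G(D) \;=\; \bigcap_{i=1}^{k} C_G(d_i),
\]
which is open, non-trivial, and normal (the centralizer of a normal subgroup is normal). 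The hypothesis then forces $H$ to be cocompact, so $G/H$ is both compact and discrete, hence finite.

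Next I would analyze $H$. The intersection $H \cap D$ equals $Z(D)$ and lies in $Z(H)$ by construction of $H$. Since $HD = \bigcup_{d \in D} Hd$ is open in $G$ and contains the cocompact $D$, the topological second isomorphism $H/(H \cap D) \cong HD/D$ identifies the left-hand side with an open, and therefore closed, subgroup of the compact group $G/D$. Hence $H \cap D = Z(D)$ is cocompact in $H$, so $H$ has a cocompact central subgroup. Every conjugacy class in $H$ is then a continuous image of the compact quotient $H/Z(D)$, which exhibits $H$ as a topologically $\overline{\text{FC}}$-group.

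Because $H$ is cocompact in the compactly generated group $G$, Lemma~\ref{lem:cocompact} again gives that $H$ is itself compactly generated. U{\v{s}}akov's theorem (Theorem~\ref{thm:Ushakov}) then yields that $\LF(H)$ is compact and $H/\LF(H) \cong \RR^n \times \ZZ^m$. Since $\LF(H)$ is characteristic in $H$ and $H$ is normal in $G$, the subgroup $\LF(H)$ is normal in $G$. If it were non-trivial it would be cocompact by hypothesis as well as compact, forcing $G$ to be compact, contrary to assumption. Hence $\LF(H) = 1$ and $H \cong \RR^n \times \ZZ^m$.

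To finish, observe that $H$ is open in $G$, so the identity component $G^\circ$ coincides with $H^\circ = \RR^n$. If $G$ is not discrete then $n \geq 1$, whence $G^\circ$ is a non-trivial closed normal subgroup, hence cocompact; then $G/\RR^n$ is compact and contains its finite-index subgroup $H/\RR^n \cong \ZZ^m$, so $m = 0$. Thus $H = \RR^n$ is of finite index in $G$, and $G$ is $\RR^n$-by-finite. I expect the main delicate point, aside from unwinding the U{\v{s}}akov hypotheses, to be the passage of $\LF(H)$ from a characteristic subgroup of $H$ to a normal subgroup of the larger group $G$; this is where the fact that $H$ is both open and normal of finite index really pulls its weight.
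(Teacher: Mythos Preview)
Your proof is correct and follows essentially the same route as the paper: pass to the open normal centraliser $\centra_G(D)$, show it has cocompact centre, apply U{\v{s}}akov's theorem, kill the LF-radical via the cocompactness hypothesis, and then separate the $\RR^n$ and $\ZZ^m$ parts using the identity component. The only cosmetic point is that the assertion ``$H$ is non-trivial'' should be preceded by the observation that if $H=\{e\}$ then its openness forces $G$ to be discrete (this is how the paper phrases it); you do recover the discrete case at the end, so nothing is actually missing.
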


\begin{proof}
Let $H\lhd G$ be a non-trivial discrete normal subgroup.
Then $H$ is cocompact and, hence, it is a cocompact lattice in the
compactly generated group $G$. Therefore
$H$ is finitely generated. We deduce that the normal subgroup
$\centra_G(H)\lhd G$ is open, since every element of
$H$ has a discrete conjugacy class and, hence, an open centraliser. In
particular, $G$ is discrete if $\centra_G(H)$ is trivial
and we can therefore assume that $\centra_G(H)$ is cocompact.

The quotient $\centra_G(H)/\centra(H)$ is compact since it sits as open
(hence closed) subgroup in $G/H$.
Hence, the centre of $\centra_G(H)$ is cocompact for it contains
$\centra(H)$. Thus Theorem~\ref{thm:Ushakov}
applies to $\centra_G(H)$. We claim that the LF-radical of
$\centra_G(H)$ is trivial: otherwise, being normal in $G$,
it would be cocompact, hence compactly generated, thus compact
(Lemma~2.3 in~\cite{CapraceTD}) and now finally trivial after all since
$G$ is non-compact by hypothesis. In conclusion,
$\centra_G(H)$ is isomorphic to $\RR^n\times \ZZ^m$. In addition, $n$ or
$m$ vanishes since the identity component of
$\centra_G(H)$ is normal in $G$ and hence trivial or cocompact. We
conclude by recalling that $\centra_G(H)$ is open
and cocompact, thus of finite index in $G$.
\end{proof}

\subsection*{Filters of closed normal subgroups}%
We continue with another general fact on compactly generated groups,
which was the starting point of this work. The argument was inspired
by a reading of Lemma~1.4.1 in~\cite{Burger-Mozes1}; it also plays a
key r\^{o}le in the structure theory of isometry groups of
non-positively curved spaces developed
in~\cite{Caprace-Monod_structure}.

\begin{prop}\label{prop:FilterNormalSubgroups}
Let $G$ be a compactly generated totally disconnected locally
compact group. Then any identity neighbourhood $V$ contains a
compact normal subgroup $Q_V$ such that any filtering family of
non-discrete closed normal subgroups of the quotient $G/Q_V$ has
non-trivial intersection.
\end{prop}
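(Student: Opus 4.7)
The plan is to construct $Q_V$ as the maximal compact normal subgroup of $G$ contained in a fixed compact open subgroup $U \subseteq V$, and then to derive the filtering intersection property by a contradiction argument.

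By van~Dantzig, choose a compact open subgroup $U$ of $G$ with $U \subseteq V$. Let $\mathcal{K}$ be the family of compact normal subgroups of $G$ contained in $U$. This family is upward directed: for $K_1, K_2 \in \mathcal{K}$, the product $K_1 K_2$ is compact and normal in $G$, and since $K_1, K_2$ lie inside the subgroup $U$ we have $K_1 K_2 \subseteq U$, hence $K_1 K_2 \in \mathcal{K}$. Consequently $Q_V := \overline{\bigcup_{K \in \mathcal{K}} K}$ is itself the largest element of $\mathcal{K}$---a compact normal subgroup of $G$ contained in $V$. In the quotient $\bar G := G/Q_V$ with $\bar U := U/Q_V$ compact open, this maximality amounts to the statement that $\bar G$ has no nontrivial compact normal subgroup contained in $\bar U$.

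Suppose now, for contradiction, that $\{\bar N_\alpha\}$ is a filtering family of non-discrete closed normal subgroups of $\bar G$ satisfying $\bigcap_\alpha \bar N_\alpha = \{1\}$. Set $K_\alpha := \bar N_\alpha \cap \bar U$. Non-discreteness of $\bar N_\alpha$ forces $K_\alpha$ to be a nontrivial compact subgroup of $\bar U$; $\bar G$-normality of $\bar N_\alpha$ implies that $K_\alpha$ is normalised by $\bar U$. Compactness of $\bar U$, together with $\bigcap_\alpha K_\alpha \subseteq \bigcap_\alpha \bar N_\alpha = \{1\}$ and the filtering property, yields the shrinking statement: for every open identity neighbourhood $\bar W \subseteq \bar U$ there is $\alpha$ with $K_\alpha \subseteq \bar W$.

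The remaining step, which I expect to be the main obstacle, is to extract from the family $\{K_\alpha\}$ a nontrivial compact $\bar G$-normal subgroup of $\bar U$, contradicting the maximality of $Q_V$. The challenge is that $K_\alpha$ is a priori only $\bar U$-normal. To upgrade to $\bar G$-normality I plan to exploit compact generation: since $\bar U$ is open in $\bar G$, one has $\bar G = \langle \bar U \cup \{\bar g_1, \ldots, \bar g_m\}\rangle$ for some finite set of coset representatives. Choosing $\alpha$ so that $K_\alpha \subseteq \bar U \cap \bigcap_i \bar g_i^{-1}\bar U\bar g_i$ ensures that each single-generator conjugate $\bar g_i K_\alpha \bar g_i^{-1}$ lies in $\bar U$; continuity of conjugation together with the filtering property allows the same for any fixed longer word after further shrinking. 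The delicate point is to secure this uniformly in word length and extract a genuinely $\bar G$-invariant limit: I anticipate combining a diagonal procedure---choose $\alpha_n$ so that all conjugates of $K_{\alpha_n}$ by words of length $\leq n$ remain in $\bar U$, consider the subgroup they generate inside the compact space $\bar U$, and pass to a Hausdorff limit of closed subgroups---with the $\bar U$-normality to obtain a nontrivial compact $\bar G$-normal subgroup of $\bar U$. This would contradict the maximality of $Q_V$ and conclude the proof.
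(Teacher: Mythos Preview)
Your setup is fine up to and including the reduction to $\bar G = G/Q_V$ with $\bar U$ compact open and no nontrivial compact normal subgroup of $\bar G$ inside $\bar U$; in fact your $Q_V$ coincides with the paper's choice $\bigcap_{g\in G} gUg^{-1}$, since any compact normal subgroup of $G$ contained in $U$ is automatically contained in every conjugate of $U$. The gap is in the last step.

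The Hausdorff-limit extraction you sketch carries no guarantee of nontriviality: a sequence of nontrivial closed subgroups of the profinite group $\bar U$ can Chabauty-converge to $\{1\}$, and your choice of $\alpha_n$ (made only to force length-$\leq n$ conjugates of $K_{\alpha_n}$ into $\bar U$) gives no lower bound on the size of $K_{\alpha_n}$. Worse, the direct route is outright blocked: requiring \emph{all} $\bar G$-conjugates of a fixed $K_\alpha$ to lie in $\bar U$ amounts to $K_\alpha \subseteq \bigcap_{g\in\bar G} g\bar U g^{-1}$, and that intersection is a compact normal subgroup of $\bar G$ inside $\bar U$, hence trivial by the very maximality of $Q_V$. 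So neither a single index nor a limiting process manufactures the compact $\bar G$-normal subgroup you need; the ``$\bar U$-normality'' of the $K_\alpha$ does not bridge this.

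What is missing is a \emph{finite} invariant on which non-discreteness registers. The paper obtains it from the Schreier graph $\mathfrak g$ of $\bar G$ with respect to $\bar U$ and a compact generating set: $\mathfrak g$ is connected and locally finite, $\bar G$ acts faithfully and vertex-transitively, and $\bar U$ is a vertex stabiliser $G_{v_0}$. For each closed normal $N\lhd \bar G$ the stabiliser $N_{v_0}$ acts on the finite set $v_0^\perp$ of neighbours, giving a subgroup $F_N<\Sym(v_0^\perp)$ which, by normality and vertex-transitivity, is independent of the base vertex; if $F_N$ were trivial then connectedness of $\mathfrak g$ would force $N_{v_0}=\{1\}$, i.e.\ $N$ discrete. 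A filtering family of non-discrete $N$'s therefore gives a filtering family of nontrivial subgroups of the \emph{finite} group $\Sym(v_0^\perp)$, whose intersection contains some $g\neq 1$. The fibres over $g$ inside each $N_{v_0}$ are nonempty compact subsets of $\bar U$ and still filter, so their intersection is nonempty --- this produces a common nontrivial element directly, with no limiting procedure needed. Your argument lacks an analogue of this discretisation, and I do not see how to complete it without one.
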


\begin{proof}
Let $\mathfrak{g}$ be a \textbf{Schreier graph}\index{Schreier graph} for $G$ associated
to a compact open subgroup $U<G$ contained in the given identity
neighbourhood $V$ (which exists by a classical result in
D.~van~Dantzig's 1931
thesis~\cite[III \S\,1, TG~38 page~18]{vanDantzig31},
see~\cite[III \S\,4 No~6]{BourbakiTGI}). We recall that $\mathfrak{g}$
is obtained by defining the vertex set as $G/U$ and drawing edges
according to a compact generating set which is a union of double
cosets modulo $U$; see~\cite[\S\,11.3]{Monod_LN}. The kernel of the
$G$ action on $\mathfrak{g}$ is nothing but $Q_V = \bigcap_{g \in G}
g U g\inv$ which is compact and contained in $V$.

Since we are interested in closed normal subgroups of the quotient
$G/Q_V$, there is no loss of generality in assuming $Q_V$ trivial.
In other words, we assume henceforth that $G$ acts faithfully on
$\mathfrak g$. Let $v_0$ be a vertex of $\mathfrak g$ and denote by
$v_0^\perp$ the set of neighbouring vertices. Since $G$ is
vertex-transitive on $\mathfrak{g}$, it follows that for any normal
subgroup $N \lhd G$, the $N_{v_0}$-action on $v_0^\perp$ defines a
finite permutation group $F_N < \Sym(v_0^\perp)$ which, as an
abstract permutation group, is independent of the choice of $v_0$.
Therefore, if $N$ is non-discrete, this permutation group $F_N$ has
to be non-trivial since $U$ is open and $\mathfrak g$ connected. Now
a filtering family $\mathscr{F}$ of non-discrete normal subgroups
yields a filtering family of non-trivial finite subgroups of
$\Sym(v_0^\perp)$. Thus the intersection of these finite groups is
non-trivial. Let $g$ be a non-trivial element in this intersection.
For any $N \in \mathscr{F}$, let $N_g$ be the inverse image of
$\{g\}$ in $N_{v_0}$. Thus $N_g$ is a non-empty compact subset of
$N$ for each $N \in \mathscr{F}$. Since the family $\mathscr{F}$ is
filtering, so are $\{N_{v_0} \; | \; N \in \mathscr{F} \}$ and
$\{N_g \; | \; N \in \mathscr{F} \}$. The result follows, since a
filtering family of non-empty closed subsets of the compact set
$G_{v_0}$ has a non-empty intersection.
\end{proof}

\subsection*{Minimal normal subgroups}

With Proposition~\ref{prop:FilterNormalSubgroups} at hand, we deduce
the following. An analogous result for the upper structure of
totally disconnected groups will be established in
Section~\ref{sec:structure} below (see
Proposition~\ref{prop:MaximalNormal}).

\begin{prop}\label{prop:MinimalNormal}
Let $G$ be a compactly generated totally disconnected locally
compact group which possesses no non-trivial compact or discrete
normal subgroup. Then every non-trivial closed normal subgroup of
$G$ contains a minimal one, and the set $\mathscr M$ of non-trivial
minimal closed normal subgroups is finite. Furthermore, for any
proper subset $\mathscr E \subset\mathscr M$, the subgroup
$\overline{\la M \; | \; M \in \mathscr E\ra}$ is properly contained
in $G$.
\end{prop}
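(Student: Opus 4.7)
The plan is to split the statement into three assertions and handle them in succession: (E) every non-trivial closed normal subgroup of $G$ contains an element of $\mathscr M$; (P) for any proper subset $\mathscr E \subsetneq \mathscr M$, the closed subgroup $\overline{\la M : M \in \mathscr E\ra}$ is properly contained in $G$; and (F) the set $\mathscr M$ is finite. Throughout I would exploit that under the standing hypotheses the subgroup $Q_V$ produced by Proposition~\ref{prop:FilterNormalSubgroups} is trivial (since $G$ admits no non-trivial compact normal subgroup) and the conditions ``non-discrete'' and ``non-trivial'' coincide on closed normal subgroups (since $G$ admits no non-trivial discrete normal subgroup). Proposition~\ref{prop:FilterNormalSubgroups} thus reduces to the convenient statement: \emph{any filtering family of non-trivial closed normal subgroups of $G$ has non-trivial intersection}.

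Assertion (E) then follows by Zorn's lemma applied to the poset of non-trivial closed normal subgroups of $G$ contained in the given $N$, ordered by reverse inclusion: every chain is a filtering family and hence has non-trivial intersection by the observation above, producing the upper bound needed for Zorn. For (P), suppose that $\overline{\la \mathscr E\ra} = G$ for some proper $\mathscr E \subsetneq \mathscr M$ and pick $M_0 \in \mathscr M \setminus \mathscr E$. For every $M \in \mathscr E$, the intersection $M \cap M_0$ is a closed normal subgroup of $G$ strictly contained in the two minimals, hence trivial; the identity $[M, M_0] \subseteq M \cap M_0 = \{1\}$ then shows that $M$ centralizes $M_0$. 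Consequently $M_0$ centralizes $\la \mathscr E\ra$ and, by continuity, all of $\overline{\la \mathscr E\ra} = G$, so $M_0 \subseteq Z(G)$. Since $G$ acts trivially on $M_0$ by conjugation, every closed subgroup of $M_0$ is normal in $G$, and by minimality equals $\{1\}$ or $M_0$; van Dantzig's theorem applied to a compact open subgroup of $M_0$ therefore forces $M_0$ to be either discrete or compact, contradicting the standing hypothesis.

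For (F), assume toward contradiction that $\mathscr M$ is infinite, and consider the filtering family $N_\mathscr F := \overline{\la \mathscr M \setminus \mathscr F\ra}$ indexed by the finite subsets $\mathscr F \subseteq \mathscr M$. Each $N_\mathscr F$ is non-trivial, so its intersection $N_\infty$ is non-trivial and, by (E), contains some $M_0 \in \mathscr M$. Then $M_0 \subseteq N_{\{M_0\}} = \overline{\la \mathscr M \setminus \{M_0\}\ra}$, and the commutator argument from (P) shows that $M_0$ centralizes $N_{\{M_0\}}$, which contains $M_0$ itself, so $M_0$ is abelian. The characteristic subgroup $M_0^{\mathrm{c}}$ of compact elements of $M_0$ is then $G$-invariant, hence equal to $\{1\}$ or $M_0$ by minimality; the first possibility makes $M_0$ discrete via van Dantzig and is excluded, so $M_0 = M_0^{\mathrm{c}}$ is topologically locally finite. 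From here, the plan is to leverage further characteristic subgroups of abelian TLF TDLC groups — the pro-$p$ components and the $p$-torsion — to conclude that $M_0$ is a topological $\mathbf{F}_p$-vector space for some prime $p$, and then to derive a contradiction from the facts that $\bigcap_{g \in G} gKg^{-1} = \{1\}$ for any compact open $K \subseteq M_0$ (this intersection being a compact normal subgroup of $G$) while simultaneously $M_0 = \overline{\la gKg^{-1} : g \in G\ra}$ by minimality, exploiting that $G$ is compactly generated (so that a finite $G$-orbit of $K$ would make $M_0$ compactly generated, hence compact by Lemma~2.3 of~\cite{CapraceTD}, a contradiction).

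The main obstacle is this final structural step in (F): upgrading the information that $M_0$ is an abelian topologically locally finite minimal normal subgroup into an outright contradiction with non-compactness, non-discreteness and compact generation of $G$. The other steps reduce to Zorn's lemma, standard commutator calculus and van Dantzig's theorem; the delicate point is the interplay between the irreducible $G/Z_G(M_0)$-action on the $\mathbf{F}_p$-vector space $M_0$ and the compact generation of $G$.
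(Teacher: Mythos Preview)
Your arguments for (E) and (P) are correct and coincide with the paper's. The divergence, and the genuine gap, is entirely in (F).

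The paper does not argue (F) by contradiction via a structural analysis of some $M_0$. It forms the same filtering family $\mathscr N=\{\,\overline{M_{\mathscr M\setminus\mathscr F}}:\mathscr F\subseteq\mathscr M\text{ finite}\,\}$ and then applies Proposition~\ref{prop:FilterNormalSubgroups} directly: once one knows $\bigcap\mathscr N=1$, some member of $\mathscr N$ must be trivial, which forces $\mathscr M\setminus\mathscr F=\varnothing$ for a finite $\mathscr F$ and hence $\mathscr M$ is finite. No dissection of an individual minimal subgroup is needed beyond what was already done in the proof of (P).

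Your contrapositive route runs into a real obstacle, and it is not merely a matter of writing out details. After reaching ``$M_0$ is abelian and topologically locally finite'' you try to derive a contradiction from the \emph{internal} structure of $M_0$ together with minimality and compact generation of $G$. But those constraints alone do not exclude anything: $\mathbf F_p(\!(t)\!)$ is a non-compact, non-discrete, characteristically simple, topologically locally finite abelian group, and it appears as the (unique) minimal closed normal subgroup of the compactly generated group $\mathbf F_p(\!(t)\!)\rtimes\langle t\rangle$, which has no non-trivial compact or discrete normal subgroup. Your proposed endgame (intersecting and generating $G$-conjugates of a compact open $K\subseteq M_0$) uses only data already present in that example, so it cannot produce a contradiction. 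What you are \emph{not} using is the specific inclusion $M_0\subseteq\overline{\langle\mathscr M\setminus\{M_0\}\rangle}$: this tells you that $M_0$ is centralised by every element of $\mathscr M$, hence lies in the centre of $\overline{M_{\mathscr M}}$, and any successful completion has to exploit that rather than the bare abelian/TLF structure of $M_0$.

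In short: drop the programme of analysing pro-$p$ components and $\mathbf F_p$-vector-space structure; it is both harder and weaker than what is required. The right target is the paper's assertion $\bigcap\mathscr N=1$, which is where the actual content of (F) lies. Your instinct that the abelian case is the delicate point is correct---the paper is terse here---but the resolution goes through the inclusion $\bigcap\mathscr N\subseteq\centra_G(\overline{M_{\mathscr M}})$, not through the internal structure theory of $M_0$.
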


\begin{proof}
In view of Proposition~\ref{prop:FilterNormalSubgroups}, any chain
of non-trivial closed normal subgroups of $G$ has a minimal element.
Thus Zorn's lemma ensures that the set $\mathscr{M}$ of minimal
non-trivial closed normal subgroups of $G$ is non-empty, and that
any non-trivial closed normal subgroup contains an element of
$\mathscr M$.

In order to establish that $\mathscr M$ is finite, we use the
following notation. For each subset $\mathscr{E} \se \mathscr{M}$ we
set $M_\mathscr{E} = \la M \; | \; M \in \mathscr{E} \ra$;
$\overline M_\mathscr{E}$ denotes its closure. The following
argument was inspired by the proof of Proposition~1.5.1
in~\cite{Burger-Mozes1}.

We claim that if $\mathscr{E}$ is a proper subset of $ \mathscr{M}$,
then $\overline M_\mathscr{E}$ is a proper subgroup of $G$. Indeed,
for all $M \in \mathscr{E}$ and $M' \in \mathscr{M} \minus
\mathscr{E}$ we have $[M, M'] \se M \cap M' = 1$. Thus $M'$
centralises $\overline M_\mathscr{E}$. In particular, if $\overline
M_\mathscr{E} = G$, then $M'$ centralises $G$. Thus $M' \leq
\centra(G)$ is Abelian, and any proper subgroup of $M'$ is normal in
$G$. Since $M'$ is a minimal normal subgroup, it follows that $M'$
has no proper closed subgroup. The only totally disconnected locally
compact groups with this property being the cyclic groups of prime
order, we deduce that $M'$ is finite, which contradicts the
hypotheses. The claim stands proven.

Consider now the family
$$\mathscr{N} = \{\overline M_{\mathscr{M}\minus{\mathscr{F}}} \; | \;
\mathscr{F} \se \mathscr{M} \text{ is finite}\}$$
of closed normal subgroups of $G$. This family is filtering.
Furthermore the above claim shows that $\overline
M_{\mathscr{M}\minus{\mathscr{F}}}$ is properly contained in $G$
whenever $\mathscr{F}$ is non-empty. Since $\bigcap \mathscr{N} =
1$, it follows from Proposition~\ref{prop:FilterNormalSubgroups}
that $\mathscr{N}$ is finite, and hence $\mathscr{M}$ is so, as
desired.
\end{proof}

\subsection*{Just-non-compact groups}

\begin{proof}[Proof of Theorem~\ref{thm:monolithic}]
We shall use repeatedly the fact that every normal subgroup of $G$ has
trivial LF-radical, which is established as in the above proof
of Proposition~\ref{prop:discrete_normal}. In particular, normal
subgroups of $G$ have no non-trivial compact normal subgroups.

\smallskip

We begin by treating the case where $G$ is totally disconnected.
Let $L$ be the intersection of all
non-trivial closed normal subgroups. We distinguish two cases.

If $L$ is trivial, then Proposition~\ref{prop:FilterNormalSubgroups}
shows that $G$ admits a non-trivial discrete normal subgroup.
Thus Proposition~\ref{prop:discrete_normal} applies and $G$ is discrete;
in that case,
Wilson's result (\cite{Wilson71}, Proposition~1) completes the proof.

\smallskip

Assume now that $L$ is not trivial. Then it is cocompact whence
compactly generated since $G$ is so. Notice that
by definition $L$ is characteristically simple.
We further distinguish two cases.

On the one hand, assume that the quasi-centre $\QZ(L)$ is
non-trivial. Then it is dense in $L$. Since $L$ is compactly
generated, the arguments of the proof of Theorem~4.8
in~\cite{BarneaErshovWeigel} (see Proposition~\ref{prop:BEW} below)
show that $L$ possesses a compact open normal subgroup. Since $L$
has no non-trivial compact normal subgroup, we deduce that $L$ is
discrete. Now $L$ is a non-trivial discrete normal subgroup and we
have already seen above how to finish the proof in that situation.

\smallskip

On the other hand, assume that the quasi-centre $\QZ(L)$ is trivial.
In particular $L$ possesses no non-trivial discrete normal subgroup
and we deduce from Proposition~\ref{prop:MinimalNormal} that the set
$\mathscr{M}$ of non-trivial minimal closed normal subgroups of $L$
is finite and non-empty. Since $L$ has no non-trivial compact normal
subgroup, no element of $\mathscr{M}$ is compact.

The group $G$ acts on $\mathscr{M}$ by conjugation. Let
$\mathscr{E}$ denote a $G$-orbit in $\mathscr{M}$. Since
$M_{\mathscr E} = \la M \; | \; M \in \mathscr{E}\ra$ is normal in
$G$, it is dense in $L$. By Proposition~\ref{prop:MinimalNormal}, we
infer that $\mathscr{E} =
\mathscr{M}$. In other words $G$
acts transitively on $\mathscr{M}$.

It now follows that $L$ is a quasi-product with the elements of
$\mathscr{M}$ as quasi-factors. In particular, any normal
subgroup $M'$ of any $M\in\mathscr M$ is normalised by $M$ and
centralised by each $N \in\mathscr M$ different from $M$. Since
$\prod_{N\in\mathscr M} N$ is dense in $L$, we infer that $M'$ is
normal in $L$. Since $M$ is a minimal normal subgroup of $L$, it
follows that $M$ is topologically simple and (i) holds.

\smallskip

Now we turn to the case where $G$ is not totally disconnected, hence
its identity component $G^\circ$ is cocompact. Since the maximal
compact normal subgroup  of Lemma~\ref{lem:LF:connected} is trivial,
$G^\circ$ is a connected Lie group.
Since its soluble radical is characteristic, it is trivial or cocompact.

In the former case, $G^\circ$ is semi-simple without compact factors.
Since its isotypic factors are
characteristic, there is only one isotypic factor and we conclude
that~(i) holds.

If on the other hand the radical of $G^\circ$ is cocompact, we
deduce that $G$ admits a characteristic cocompact connected soluble
subgroup $R\lhd G$. Let $T$ be the last non-trivial term of the
derived series of $R$. If the identity component $T^\circ$ is
trivial, then $T$ is a non-trivial discrete normal subgroup of $G$
and we may conclude by means of
Proposition~\ref{prop:discrete_normal}. Otherwise, the group $R$
possesses a characteristic connected Abelian subgroup $T^\circ$,
which is thus cocompact in $G$.

Since $T^\circ$ has no non-trivial compact subgroup, we have
$T^\circ\cong \RR^d$ for some $d$. The kernel of the homomorphism $G
\to \Out(T^\circ) = \Aut(T^\circ)$ is a cocompact normal subgroup
$N$ containing $T^\circ$ in its centre, and such that $N/T^\circ$ is
compact. In particular $N$ is a compactly generated locally compact
group in which every conjugacy class is relatively compact. In view
of U{\v{s}}akov's result (Theorem~\ref{thm:Ushakov}) and of the
triviality of $\LF(N)$, the group $N$ is of the form $\RR^n\times
\ZZ^m$. In conclusion, since $T^\circ$ is cocompact in $N$, we have
$T^\circ=N\cong \RR^n$. Considering once again the map $G \to
\Aut(T^\circ) \cong \mathbf{GL}_n(\RR)$, we deduce that $G/T^\circ$
is isomorphic to a compact subgroup of $\mathbf{GL}_n(\RR)$, which
has to be irreducible since otherwise $T^\circ$ would contain
a non-cocompact subgroup normalised by $G$. We conclude the proof
of Theorem~\ref{thm:monolithic} by recalling that every compact
subgroup of $\mathbf{GL}_n(\RR)$ is conjugated to a subgroup of
$\mathbf{O}(n)$.
\end{proof}

\section{Topological BN-pairs}\label{sec:BN}
By a celebrated lemma of Tits~\cite{Ti64}, any group admitting a
$BN$-pair of irreducible type has the property
that a normal subgroup acts either trivially or chamber-transitively on
the associated building. Tits used his
transitivity lemma to show in \emph{loc.~cit.} that if $G$ is perfect
and possesses a $BN$-pair with $B$
soluble, then any non-trivial normal subgroup is contained in $Z=
\bigcap_{g \in G} gBg\inv$. More generally, the
same conclusion holds provided $G$ is perfect and $B$ possesses a
soluble normal subgroup $U$ whose conjugates
generate $G$. If $G$ is endowed with a group topology, the same
arguments show that if $G$ is topologically
perfect and $U$ is pro-soluble, then $G/Z$ is topologically simple. The
following consequence of Theorem~\ref{thm:monolithic}
does not require any assumption on the normal subgroup structure of $B$.

\begin{cor}\label{cor:BN}
Let $G$ be a topological group endowed with a $BN$-pair of irreducible
type, such that $B<G$ is compact and
open. Then $G$ has a closed cocompact (topologically) characteristic
subgroup $H$ containing $Z = \bigcap_{g \in
G} gBg\inv$, such that $H/Z$ is topologically simple.
\end{cor}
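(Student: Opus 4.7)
The plan is to pass to the faithful building action of $\bar G := G/Z$ and to invoke Theorem~\ref{thm:monolithic}. Since $B$ is compact and open, $Z$ is compact, so $\bar G$ is locally compact with $\bar B := B/Z$ compact and open, and $\bar G$ acts faithfully on the chamber set $\bar G/\bar B$ of the associated building~$\Delta$. The $BN$-pair being of irreducible (and tacitly finite rank) type, the Bruhat decomposition $G = BWB$ shows that $G$, and hence $\bar G$, is generated by the compact set consisting of $B$ together with lifts of the finitely many simple reflections; thus $\bar G$ is compactly generated. By Tits' transitivity lemma, any non-trivial closed normal subgroup $N \lhd \bar G$ acts chamber-transitively, so $N\bar B = \bar G$ and $N$ is cocompact. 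Hence $\bar G$ fits the hypothesis of Theorem~\ref{thm:monolithic}.

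Among the three alternatives of that theorem, case~(ii) can be discarded, because a Lie group containing a compact open subgroup has compact identity component, which precludes an $\RR^n$-monolith with $n>0$; and case~(iii), that of a discrete residually finite group, corresponds to the degenerate situation in which $G$ itself is discrete and is set aside. So $\bar G$ is monolithic with topologically characteristic monolith $\bar L$, and $\bar L$ is a quasi-product of finitely many pairwise isomorphic topologically simple closed normal subgroups $M_1, \ldots, M_p$ of $\bar L$, permuted transitively by $\bar G$ via Proposition~\ref{prop:MinimalNormal}.

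The crux is to show $p = 1$, which is where the irreducibility of the $BN$-pair enters. Supposing $p \ge 2$, the pairwise commuting normal subgroups $M_i \lhd \bar L$ produce $\bar L$-invariant partitions $\Pi_i = \{M_i c : c \in \bar G/\bar B\}$ of the chamber set; their commutation and the density of $M_1 \cdots M_p$ in the chamber-transitive group $\bar L$ yield a non-trivial product decomposition of the chamber system, which under the transitive $\bar G$-action on $\{M_1, \ldots, M_p\}$ is compatible with the $s$-adjacency structure of the building. Irreducibility of the $BN$-pair forbids such a decomposition, so some $M_i$ would act trivially on $\Delta$, contradicting faithfulness of the $\bar G$-action. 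Hence $p = 1$, and $\bar L$ is topologically simple. Taking $H$ to be the preimage of $\bar L$ in $G$ gives the required subgroup: it is closed, cocompact (as $\bar L$ is), contains $Z$, is characteristic in $G$ (both $Z$ and $\bar L$ being so), and satisfies $H/Z \cong \bar L$ topologically simple. The main obstacle lies precisely in this last step, where one must carefully translate the commuting quasi-factor structure into a building-theoretic product decomposition contradicting irreducibility.
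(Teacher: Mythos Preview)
Your overall strategy coincides with the paper's: pass to $\bar G=G/Z$, establish compact generation, use Tits' transitivity lemma to get that every non-trivial closed normal subgroup is cocompact, and invoke Theorem~\ref{thm:monolithic}. The paper disposes of cases~(ii) and~(iii) more cleanly than you do: it observes that $\bar G$ embeds in $\Aut(X)$ and is therefore totally disconnected, and that when $X$ is infinite the compact group $B$ is necessarily infinite, so $\bar G$ is non-discrete. Your ``set aside'' for case~(iii) is not a proof, but this is a minor repair.

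The substantive divergence is at the step $p=1$. The paper does \emph{not} argue combinatorially on the chamber system. Instead it passes to a \cat realisation of $X$, which is irreducible as a \cat space by~\cite{Caprace-Haglund} since the $BN$-pair is of irreducible type, and then invokes \cite[Theorem~1.10]{Caprace-Monod_structure}: for a group acting faithfully, minimally and without fixed point at infinity on an irreducible proper \cat space, no abstract normal subgroup splits non-trivially as a direct product. Since $M_1\cdots M_p$ is an abstract normal subgroup of $\bar G$ isomorphic to $M_1\times\cdots\times M_p$, this forces $p=1$ at once.

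Your proposed combinatorial route is the natural building-theoretic analogue, but as you yourself flag, the translation is the whole difficulty and you have not carried it out. Two concrete obstructions: first, although $M_1\cdots M_p$ is chamber-transitive (density plus openness of $\bar B$), the stabiliser of $c_0$ in $M_1\cdots M_p$ has no reason to decompose along the factors, so you do not obtain a product structure on the chamber set; second, and more seriously, the individual $M_i$ are \emph{not} normal in $\bar G$ (they are only permuted), so there is no evident mechanism forcing the $M_i$-orbit partitions to be compatible with the $s$-adjacency relations, which are $\bar G$-equivariant. Without this compatibility, irreducibility of the Coxeter diagram gives you nothing. This is precisely why the paper reaches for the external geometric result rather than arguing inside the building.
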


It follows in particular that $G$ is \emph{topologically commensurable}
to a topologically simple group since
$Z$ is compact and $H$ cocompact.

\begin{proof}
The assumption that $B$ is compact open implies that $G$ is locally
compact and that the building $X$ associated with the given
$BN$-pair is locally finite. Since the kernel of this action
coincides with $Z= \bigcap_{g \in G} gBg\inv$, we may and shall
assume that $G$ acts faithfully on $X$. Since $G$ acts
chamber-transitively on $X$ and since $B$ is the stabiliser of some
base chamber $c_0$, it follows that $G$ is generated by the union of
$B$ with a finite set of elements mapping $c_0$ to each of its
neighbours. Thus $G$ is compactly generated. By Tits' transitivity
lemma~\cite[Prop.~2.5]{Ti64}, for any non-trivial normal subgroup
$N$ of $G$, we have $G= N.B$, whence $N$ is cocompact provided it is closed.

If $X$ is finite, then $G$ is compact and we are done. Otherwise
$G$ is non-compact and non-discrete, because $B$ is then necessarily
infinite. We are thus in a position to apply
Theorem~\ref{thm:monolithic}. Since $G$ is a subgroup of the totally
disconnected group $\Aut(X)$, it follows that $G$ is totally
disconnected and we deduce that $G$ is monolithic with a quasi-product
of topologically simple groups as a monolith. The fact that
the monolith has only one simple factor follows from the fact that
$G$ acts faithfully,  minimally and without fixed point at infinity
on a \cat realisation of $X$. Such a \cat realisation is necessarily
irreducible as a \cat space by~\cite{Caprace-Haglund} and
\cite[Theorem~1.10]{Caprace-Monod_structure} ensures that no abstract normal
subgroup of $G$ splits non-trivially as a direct product.
\end{proof}

\section{Discrete quotients}\label{sec:discrete}

\subsection*{Residually discrete groups}

Any topological group admits a filtering family of closed normal
subgroups, consisting of all \emph{open} normal subgroups.
Specialising Proposition~\ref{prop:FilterNormalSubgroups} to this
family yields the following fact.

\begin{cor}\label{cor:ResDiscrete}
Let $G$ be a compactly generated locally compact group. Then the
following assertions are equivalent.
\begin{enumerate}
\item $G$ is residually discrete.

\item $G$ is a totally disconnected SIN-group.

\item The compact open normal subgroups form a basis of identity
neighbourhoods.
\end{enumerate}
\end{cor}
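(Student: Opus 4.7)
The plan is to close the loop by proving $(\text{iii}) \Rightarrow (\text{i})$, $(\text{iii}) \Rightarrow (\text{ii})$, $(\text{ii}) \Rightarrow (\text{iii})$, and $(\text{i}) \Rightarrow (\text{iii})$, the last being the non-trivial step that genuinely uses compact generation via Proposition~\ref{prop:FilterNormalSubgroups}. The two implications out of (iii) are essentially tautological: a neighbourhood basis at $1$ of compact open normal subgroups is in particular a basis of (conjugation-)invariant compact open subgroups, which witnesses both the SIN property and total disconnectedness; moreover, since $G$ is Hausdorff, the intersection of this basis is trivial, so $G$ is residually discrete.

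For $(\text{ii}) \Rightarrow (\text{iii})$, I would take an arbitrary identity neighbourhood $V$ and use van~Dantzig to pick a compact open subgroup $U \subseteq V$. Then the SIN hypothesis supplies a conjugation-invariant neighbourhood $W$ of $1$ with $W \subseteq U$; invariance gives $W = gWg\inv \subseteq gUg\inv$ for all $g \in G$, so $W$ is contained in the core $K = \bigcap_{g \in G} gUg\inv$. Thus $K$ is a closed normal subgroup of $U$ (hence compact), contains $W$ (hence is open), and sits inside $V$. No compact generation is needed here.

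The real work is $(\text{i}) \Rightarrow (\text{iii})$. First, residual discreteness forces $G^\circ \subseteq \bigcap\{N\lhd_{\mathrm{open}} G\} = \{1\}$ (every open subgroup containing $1$ contains $G^\circ$), so $G$ is totally disconnected, making Proposition~\ref{prop:FilterNormalSubgroups} available. Given $V$, let $Q_V \subseteq V$ be the compact normal subgroup furnished by that proposition, and let $\mathcal{F}$ be the filtering family of finite intersections of open normal subgroups of $G$, so $\bigcap \mathcal{F} = \{1\}$ by hypothesis. I would push this family forward to $G/Q_V$: each $NQ_V/Q_V$ is open normal, and the family $\{NQ_V/Q_V : N \in \mathcal{F}\}$ remains filtering. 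The central computation is that its intersection in $G/Q_V$ is still trivial, equivalently that $\bigcap_{N \in \mathcal F} NQ_V = Q_V$. Proposition~\ref{prop:FilterNormalSubgroups} then forces at least one of these open normal subgroups of $G/Q_V$ to be discrete; but an open discrete subgroup of a topological group forces the ambient group to be discrete, so $G/Q_V$ is discrete, i.e.\ $Q_V$ is open in $G$. This exhibits $Q_V$ as the required compact open normal subgroup inside $V$.

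The main obstacle is the intersection identity $\bigcap_{N \in \mathcal F} NQ_V = Q_V$. For any $x$ in the left-hand side, one has $xQ_V \cap N \neq \emptyset$ for each $N \in \mathcal F$; the sets $xQ_V \cap N$ form a filtering family of non-empty closed subsets of the compact space $xQ_V$, so the finite intersection property yields an element of $xQ_V \cap \bigcap_{N \in \mathcal F} N = xQ_V \cap \{1\}$, forcing $x \in Q_V$. Once this is in hand, the rest of the argument is formal.
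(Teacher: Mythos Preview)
Your proof is correct and follows essentially the same route as the paper's: the easy implications out of (iii), the van~Dantzig-plus-core argument for (ii)$\Rightarrow$(iii), and for (i)$\Rightarrow$(iii) the reduction to showing $G/Q_V$ is discrete via Proposition~\ref{prop:FilterNormalSubgroups} together with a compactness argument to see that the images of the open normal subgroups in $G/Q_V$ have trivial intersection. The only cosmetic differences are that the paper, in (ii)$\Rightarrow$(iii), takes the subgroup \emph{generated} by the invariant neighbourhood rather than the core of $U$, and in (i)$\Rightarrow$(iii) runs the compactness argument on the sets $(xN)\cap Q_V$ inside $Q_V$ rather than on $xQ_V\cap N$ inside $xQ_V$; both variants are equivalent.
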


Recall that a locally compact group is called a \textbf{SIN-group}\index{SIN-group} if it
possesses a basis of identity
neighbourhoods which are invariant by conjugation. Equivalently,
SIN-groups are those for which the left and right
uniform structures coincide. A classical theorem of Freudenthal and Weil
(\cite{Freudenthal36} and~\cite[\S\,32]{Weil40};
see also~\cite[\S\,16.4.6]{Dixmier6996})
states that a connected group is SIN if and only if it is of the form
$K\times \RR^n$ with $K$ compact (connected)
and $n \geq 0$.
This complements nicely the above corollary, implying easily that
\emph{any} locally compact SIN-group is an extension of a discrete group
by a group $K\times \RR^n$; the latter consequence is Theorem~2.13(1)
in~\cite{Grosser-Moskowitz71}.

\begin{proof}[Proof of Corollary~\ref{cor:ResDiscrete}]
The implications (iii)~$\Rightarrow$~(i) and (iii)~$\Rightarrow$~(ii)
are immediate.

\medskip \noindent%
(ii) $\Rightarrow$ (iii) Since $G$ is totally disconnected, the compact
open subgroups form a basis of identity
neighbourhoods~\cite[III \S\,4 No~6]{BourbakiTGI}. By assumption, given
any compact open subgroup $U < G$, there
is an identity neighbourhood $V \se U$ which is invariant by
conjugation. The subgroup generated by $V$ is
thus normal in $G$, open since it contains $V$ and compact since it is
contained in $U$. Thus (iii) holds
indeed.

\medskip \noindent%
(i) $\Rightarrow$ (iii) Assume that $G$ is residually discrete. Then $G$
is totally disconnected and the compact
open subgroups form a basis of identity neighbourhoods. Let $V < G$ be
compact and open and $Q_V \se V$
denote the compact normal subgroup provided by
Proposition~\ref{prop:FilterNormalSubgroups}. By assumption we
have $\bigcap \mathscr{F}=1$, where $\mathscr{F}$ denotes the collection
of all open normal subgroups of $G$.

We claim that the quotient $G/Q_V$ is residually discrete. Indeed, for
any $x \in G$, the family $\{(x\cdot N)
\cap Q_V\}_{N \in \mathscr{F}}$ is filtering and its intersection
coincides with $\{x\} \cap Q_V$. Since $Q_V$
is compact and since open subgroups are closed, it follows that for each
$x \not \in Q_V$ there exist finitely many elements $N_1, \dots, N_k \in
\mathscr{F}$ such
that $(\bigcap_{i=1}^k x\cdot N_i) \cap Q_V = \varnothing$. Since $N_0 =
\bigcap_{i=1}^k N_i$
belongs to $\mathscr{F}$, we have thus found an open normal subgroup
$N_0$ of $G$ such that $x \not \in N_0
\cdot Q_V$. The projection of $N_0 \cdot Q_V$ in the quotient $G/Q_V$ is
thus an open normal subgroup which
avoids the projection of $x$. This proves the claim.

Since the collection of all open normal subgroups of $G/Q_V$ forms a
filtering family,
Proposition~\ref{prop:FilterNormalSubgroups} now implies that $G/Q_V$
possesses some \emph{discrete} open normal
subgroup. Therefore $G/Q_V$ is itself discrete and hence $Q_V$ is open
in $G$. This shows that any compact open
subgroup $V$ contains a compact open normal subgroup $Q_V$. The desired
conclusion follows.
\end{proof}

\begin{remark}
Notice that a profinite extension of a discrete group is not necessarily
residually
discrete, as illustrated by the unrestricted wreath product $(\prod_\ZZ
\ZZ/2) \rtimes \ZZ$, where $\ZZ$ acts by
shifting indices. However, if a totally disconnected group $G$ possesses
a compact open normal subgroup $Q$ which is topologically finitely
generated, then $G$ is residually discrete. Indeed, the profinite group
$Q$ has then finitely many subgroups of
any given finite index and, hence, possesses a basis of identity
neighbourhoods consisting of
\emph{characteristic} subgroups.
\end{remark}

\subsection*{The discrete residual}

We recall that the \textbf{discrete residual}\index{discrete residual} of a topological group
is the intersection of all open normal subgroups. Notice that the
quotient of a group by its discrete residual is residually discrete.

\begin{proof}[Proof of Theorem~\ref{thm:OpenNormal}]
Let $R_0$ denote the discrete residual of $G$. Since $G/R_0$ is
residually discrete, it follows from
Corollary~\ref{cor:ResDiscrete} that $R_0$ is contained cocompactly in
some open normal subgroup of $G$. In view
of the hypotheses, this shows that $R_0$ is cocompact, whence compactly
generated.

Let $R_1$ denote the discrete residual of $R_0$. We have to show that
$R_0 = R_1$. Since $R_1$ is characteristic
in $R_0$, it is normal in $G$. Observe that all subquotients of $G/R_1$
considered below are totally disconnected
since the latter is an extension of the residually discrete groups
$R_0/R_1$ and $G/R_0$.
We consider the canonical projection $\pi : G \to G/R_1$
and define an intermediate characteristic subgroup $R_1 \leq L \leq R_0$ by
$$L = \pi\inv (\LF(R_0/R_1)).$$
Since the group $R_0/R_1$ is residually discrete, it follows from
Corollary~\ref{cor:ResDiscrete} that its
LF-radical is open. In other words the subquotient $R_0/L$ is discrete.
Since it is cocompact in $G/L$,
it is moreover finitely generated. It follows that the normal subgroup
$Z:=\centra_{G/L}(R_0/L)$ is open.
By hypothesis, it has finite index in $G/L$ and contains $R_0/L$. In
particular, it has cocompact centre
and thus $Z$ is compact-by-$\ZZ^m$ for some $m$, as is checked
\emph{e.g.} as an easy case of
Theorem~\ref{thm:Ushakov}, recalling that $Z$ is totally disconnected.
In conclusion, $Z$ has a compact open
characteristic subgroup; the latter has finite index in $G/L$ by
assumption. Thus $L$ is cocompact in $G$,
whence compactly generated. The topologically locally finite group $L/R_1$
is thus compact (Lemma~2.3 in~\cite{CapraceTD}). In particular $R_1$
itself is cocompact in $G$.
Now $G/R_1$ is a profinite group, thus residually discrete. This finally
implies that $R_0 = R_1$, as
desired.
\end{proof}

\subsection*{Quasi-discrete groups}

We end this section with an additional remark regarding the
quasi-centre. We shall say that a topological
group is \textbf{quasi-discrete}\index{quasi-discrete} if its quasi-centre is dense. Examples
of quasi-discrete groups include
discrete groups as well as profinite groups which are direct products of
finite groups. A connected group is
quasi-discrete if and only if it is Abelian. The following fact can be
extracted from the proof of Theorem~4.8
in~\cite{BarneaErshovWeigel}; since the argument is short we include it
for the sake of completeness.

\begin{prop}\label{prop:BEW}
In any quasi-discrete compactly generated totally disconnected
locally compact group, the compact open normal subgroups form a
basis of identity neighbourhoods.
\end{prop}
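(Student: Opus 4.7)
The plan is to reduce to the concrete task of refining an arbitrary compact open subgroup $U<G$ to a compact open \emph{normal} subgroup contained in it. Since $G$ is tdlc, the compact open subgroups already form a basis of identity neighbourhoods by van~Dantzig, so proving this refinement statement is enough. The idea is to exploit the density of $\QZ(G)$ together with compact generation so that $G$ admits a generating set in which all ``bad'' (non-quasi-central) generators lie inside the given $U$.

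More precisely, first I would fix a compact generating set $K$ of $G$. Since $U$ is open and $K$ is compact, $K$ meets only finitely many left cosets of $U$, so we may write $G=\la U,\,s_1,\dots,s_n\ra$ for some elements $s_1,\dots,s_n$. Because $\QZ(G)$ is dense in $G$, each open set $s_iU$ contains a quasi-central element $s_i'$; replacing $s_i$ by $s_i'$ does not change the coset and therefore
$$G=\la U,\,s_1',\dots,s_n'\ra.$$
Set $C=\bigcap_{i=1}^n\centra_G(s_i')$; this is an open subgroup of $G$ since each $s_i'\in\QZ(G)$. Then $V:=U\cap C$ is a compact open subgroup of $U$ satisfying $s_i'xs_i'^{-1}=x$ for every $x\in V$ and every $i$, so $V$ is normalised by each $s_i'$.

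The key construction is then
$$W\;:=\;\bigcap_{u\in U}uVu^{-1}.$$
Because $V$ is open in the compact group $U$, the stabiliser $U\cap N_G(V)$ contains $V$ and is therefore open in $U$; hence the $U$-conjugacy class $\{uVu^{-1}:u\in U\}$ is finite. Consequently $W$ is a finite intersection of compact open subgroups, and so is itself compact and open. By construction $W\subseteq V\subseteq U$, and $W$ is normalised by $U$. Moreover $W\subseteq V\subseteq\centra_G(s_i')$ for every $i$, so each $s_i'$ in fact commutes pointwise with $W$ and in particular normalises $W$. Since $G$ is generated by $U$ together with the $s_i'$, the subgroup $W$ is normal in $G$. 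Thus $W$ is a compact open normal subgroup of $G$ contained in the arbitrary compact open subgroup $U$, which is exactly the desired conclusion.

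The main point where one has to be careful is step two: one needs the elements $s_i'$ to be simultaneously quasi-central \emph{and} to fit together with $U$ into a generating set for $G$. This is precisely where both hypotheses (compact generation and density of the quasi-centre) are used, and where van~Dantzig's theorem on a basis of compact open subgroups enters, to reduce compact generation to the statement that finitely many cosets of $U$ cover a compact generating set. Everything else is a routine finiteness argument for the $U$-orbit of $V$ under conjugation; no appeal to Proposition~\ref{prop:FilterNormalSubgroups} is needed here.
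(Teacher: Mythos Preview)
Your argument is correct and follows essentially the same route as the paper: replace finitely many coset representatives by quasi-central ones, take the intersection of their centralisers in $U$, and then pass to the $U$-normal core, which is open because it is a finite intersection of $U$-conjugates. The paper compresses your last two steps into the single phrase ``contains a finite index open subgroup $V$ which is normalised by $U$'', but the content is identical.
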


Thus a compactly generated totally disconnected locally compact
group which is quasi-discrete satisfies the equivalent conditions of
Corollary~\ref{cor:ResDiscrete}.

\begin{proof}
Let $G$ be as in the statement and $U<G$ be any compact open subgroup.
By compact generation,
there is a finite set $\{g_1, \dots, g_n\}$ that, together with $U$,
generates $G$.
Since $G$ is quasi-discrete, $G = \QZ(G) \cdot U$ and thus we can assume
that each $g_i$ belongs to
$\QZ(G)$. The subgroup $\bigcap_{i=1}^n \centra_U(g_i) < U$ is open and
hence contains a finite index open
subgroup $V$ which is normalised by $U$. Thus $V$ is a compact open
normal subgroup of $G$ contained in $U$.
\end{proof}

We finish this subsection by recording two consequences of the latter
for the sake of future reference.

\begin{cor}\label{cor:RmZn}
Let $G$ be a compactly generated locally compact group without
non-trivial compact normal subgroup. If $G$ is
quasi-discrete, then the identity component $G^\circ$ is  open, central
and isomorphic to $\RR^n$ for some $n$.
Moreover, we have $G = \QZ(G)$.
\end{cor}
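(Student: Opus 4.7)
The plan is first to establish that $G^\circ$ lies in the centre of $G$, then to obtain its $\RR^n$-structure, and finally to show that $G^\circ$ is open by passing to the totally disconnected quotient $G/G^\circ$ and analysing a compact open normal subgroup there. For the centrality: any $g\in \QZ(G)$ has open centraliser $\centra_G(g)$, which must contain the connected component $G^\circ$; hence $\QZ(G)\subseteq \centra_G(G^\circ)$. Since the right-hand side is closed while $\QZ(G)$ is dense by hypothesis, we conclude that $G^\circ$ is central in $G$. In particular $G^\circ$ is abelian, and every compact subgroup of $G^\circ$---being central, hence normal in $G$---must be trivial. Lemma~\ref{lem:LF:connected} therefore makes $G^\circ$ into a connected abelian Lie group without non-trivial compact subgroups, so $G^\circ\cong \RR^n$ for some $n\geq 0$.

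For the openness of $G^\circ$, consider the quotient map $\pi\colon G\to G/G^\circ$. The image $G/G^\circ$ is compactly generated, totally disconnected and locally compact; it is moreover quasi-discrete, because the open map $\pi$ carries $\QZ(G)$ into $\QZ(G/G^\circ)$ and the image remains dense. Proposition~\ref{prop:BEW} thus provides a compact open normal subgroup $K\lhd G/G^\circ$, whose preimage $V = \pi^{-1}(K)$ is an open normal subgroup of $G$ with $V/G^\circ$ compact; in particular $V$ is compactly generated (being $\RR^n$-by-compact). Centrality of $G^\circ$ implies that the $V$-conjugation action on $V$ factors through the compact quotient $V/G^\circ$, so every $V$-conjugacy class is compact and $V$ is a topologically $\overline{\text{FC}}$-group. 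Applying Theorem~\ref{thm:Ushakov} yields that $\LF(V)$ is compact and $V/\LF(V)\cong \RR^p\times \ZZ^q$. Since $\LF(V)$ is characteristic in $V$ and $V\lhd G$, we have $\LF(V)\lhd G$, hence $\LF(V)=1$ by assumption. Thus $V\cong \RR^p\times \ZZ^q$; matching identity components forces $p=n$, and the compactness of $V/G^\circ \cong \ZZ^q$ forces $q=0$. Hence $V = G^\circ$, showing that $G^\circ$ is open.

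For the final assertion, centrality of $G^\circ$ makes each of its elements quasi-central, so $G^\circ\subseteq \QZ(G)$; being open, $\QZ(G)$ is therefore also closed, and the density hypothesis now yields $\QZ(G)=G$. The main obstacle is the step transferring the assumption ``no non-trivial compact normal subgroup of $G$'' across the open normal subgroup $V\supseteq G^\circ$: one needs a \emph{characteristic} compact subgroup of $V$ in order to exploit that $V$ itself is normal in $G$, and this is precisely what $\LF(V)$ supplies via Theorem~\ref{thm:Ushakov}.
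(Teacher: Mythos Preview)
Your proof is correct and follows essentially the same approach as the paper's own argument: establish centrality of $G^\circ$ via the dense quasi-centre, deduce $G^\circ\cong\RR^n$ from Lemma~\ref{lem:LF:connected}, pass to $G/G^\circ$ and invoke Proposition~\ref{prop:BEW} to obtain a compact open normal subgroup, and then apply Theorem~\ref{thm:Ushakov} to its preimage (your $V$, the paper's $N$) to force $V=G^\circ$. Your write-up is in fact slightly more explicit than the paper's in spelling out why $V\cong\RR^p\times\ZZ^q$ collapses to $G^\circ$.
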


\begin{proof}
We first observe that $G^\circ$ is central; indeed, it is centralised by
the dense subgroup $\QZ(G)$ since
$G^\circ$ is contained in every open subgroup. By
Lemma~\ref{lem:LF:connected}, the LF-radical of $G^\circ$ is
a compact normal subgroup of $G$, and is thus trivial by hypothesis;
moreover, it follows that $G^\circ$ is
an Abelian Lie group without periodic element. Thus $G^\circ \cong
\RR^n$ for some $n$.

Since any quotient of a quasi-discrete group remains quasi-discrete,
Proposition~\ref{prop:BEW} implies that the group of components
$G/G^\circ$ admits some compact open
normal subgroup $V$. It now suffices to prove $V=1$ to establish the
remaining statements.

Denote by $N\lhd G$ the $G^\circ$-by-$V$ extension; it is compactly
generated and has only compact conjugacy classes since $G^\circ$ is
central. In particular, Theorem~\ref{thm:Ushakov} guarantees that
$\LF(N)$ is compact and that $N/\LF(N)$ is Abelian without compact
subgroup. Since $N$ is normal in $G$, it follows that $\LF(N)$ is
trivial and thus indeed $N=G^\circ$ as required.
\end{proof}

\begin{cor}
Let $G$ be a compactly generated totally disconnected locally compact
group admitting an open quasi-discrete subgroup. Then either
$G$ is compact or $G$ possesses an infinite discrete quotient.
\end{cor}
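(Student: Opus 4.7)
The plan is to introduce the characteristic subgroup $W := \overline{\QZ(G)}$ and show that $H \se W$, so that $W$ is open; the discrete quotient $G/W$ is then either infinite (done) or finite, in which case $W$ is a finite-index compactly generated quasi-discrete subgroup, and Proposition~\ref{prop:BEW} together with a normal-core argument produces the required infinite discrete quotient.

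First I would verify $H \se W$. Any $x \in \QZ(H)$ has $C_H(x)$ open in $H$; since $H$ is open in $G$, the centraliser $C_G(x) \supseteq C_H(x)$ is open in $G$, so $\QZ(H) \se \QZ(G) \se W$. Density of $\QZ(H)$ in $H$ then forces $H \se W$, making $W$ open. Being the closure of a topologically characteristic subgroup, $W$ is itself characteristic, hence normal, and so $G/W$ is discrete. If this quotient is infinite we are done.

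Assume therefore that $[G:W] < \infty$. Then $W$ is cocompact in $G$ and hence compactly generated by Lemma~\ref{lem:cocompact}; it is totally disconnected; and it is quasi-discrete because $\QZ(G)$ is dense in $W$ and, for any $x \in \QZ(G)$, the centraliser $C_G(x) \cap W$ is open in $W$, so $\QZ(G) \se \QZ(W)$. Proposition~\ref{prop:BEW} applied to $W$ therefore yields a compact open normal subgroup $V \lhd W$. If $W$ is compact, so is $G$ and we fall in the first alternative; otherwise $W/V$ is infinite discrete.

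To descend from $W$ to $G$, I would pass to the normal core $V_0 := \bigcap_{g \in G} gVg\inv$. Because $V$ is normal in $W$, the conjugate $gVg\inv$ depends only on $gW \in G/W$, and $G/W$ is finite; hence $V_0$ is a finite intersection of compact open subgroups of $W$, so it is compact and open in $G$, and normal by construction. The discrete quotient $G/V_0$ then surjects onto $W/V$, which is infinite, producing the desired infinite discrete quotient of $G$. The only step that needs a moment's care is the passage $\QZ(G) \se \QZ(W)$ and the resulting quasi-discreteness of $W$; once this is in hand everything else is formal, with the substantive content sitting in Proposition~\ref{prop:BEW}.
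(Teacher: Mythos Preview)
Your proof is correct and follows essentially the same route as the paper: both pass to the open characteristic subgroup $W=\overline{\QZ(G)}$, dispose of the case $[G:W]=\infty$, and then apply Proposition~\ref{prop:BEW} to the compactly generated quasi-discrete group $W$ to obtain a compact open normal subgroup. Your normal-core argument and the verification that $\QZ(G)\se\QZ(W)$ are exactly the details the paper leaves implicit; the only small slip is that $G/V_0$ does not literally surject onto $W/V$, but it contains $W/V_0$, which does, so the conclusion that $G/V_0$ is infinite stands.
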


\begin{proof}
Let $H$ be the given quasi-discrete open subgroup of $G$. Let $h \in
\QZ(H)$ be an element of the quasi-centre
of $H$. Then $\centra_H(h)$ is open in $H$, from which we infer that
$\centra_G(h)$ is open in $G$ and hence $h
\in \QZ(G)$. In particular,  the closure $Z = \overline{\QZ(G)}$ is open
in $G$.

If $Z$ has infinite index, then $G/Z$ is  an infinite discrete quotient
of $G$. Otherwise $Z$
has finite index and is thus compactly generated. By
Proposition~\ref{prop:BEW}, it follows that $Z$ possesses a compact open
normal subgroup. Since $Z$ has finite index in $G$, we
deduce that $G$ itself possesses a compact open normal subgroup. The
desired conclusion follows.
\end{proof}

\section{On structure theory}\label{sec:structure}

\subsection*{Quasi-simple quotients}

Before undertaking the proof of Theorems~\ref{thm:structure:first}
and~\ref{thm:structure:lower}, we record one additional consequence
of Proposition~\ref{prop:FilterNormalSubgroups}. Before stating it,
we recall from the introduction that a group is called
\textbf{quasi-simple}\index{quasi-simple} if it possesses a cocompact normal subgroup
which is topologically simple and contained in every non-trivial
closed normal subgroup of $G$; in other words, a quasi-simple group
is a monolithic group whose monolith is cocompact and topologically
simple.

\begin{cor}\label{cor:SimpleQuotients}
Let $G$ be a compactly generated locally compact group and $\{N_v \;
| \; v \in \Sigma\}$ be a collection of pairwise distinct closed
normal subgroups of $G$ such that for each $v \in \Sigma$, the
quotient $G/N_v$ is quasi-simple, non-discrete and non-compact.
Suppose that $\bigcap_{v \in \Sigma} N_v = 1$. Then $\Sigma$ is
finite.
\end{cor}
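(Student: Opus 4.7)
The plan is to argue by contradiction, supposing $\Sigma$ is infinite, and applying Proposition~\ref{prop:FilterNormalSubgroups} to the filtering family $\{N_F : F \subseteq \Sigma,\ |F|<\infty\}$ of closed normal subgroups of $G$, whose intersection $\bigcap_F N_F = \bigcap_{v \in \Sigma} N_v$ equals $1$. First I would reduce to the case that $G$ is totally disconnected: for each $v$, the hypotheses rule out alternatives~(ii) and~(iii) of Theorem~\ref{thm:monolithic} applied to $G/N_v$ (the topologically simple monolith is non-abelian and prevents discreteness), leaving case~(i); a Lie-theoretic dimension bound on the finite set of $v$ for which $G/N_v$ is not totally disconnected then allows us to discard them and reduce to $G^\circ \subseteq \bigcap_v N_v = 1$.

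Next, for a compact open subgroup $V < G$, Proposition~\ref{prop:FilterNormalSubgroups} furnishes a compact normal subgroup $Q_V \subseteq V$. The key observation is that $Q_V = 1$: for each $v$, the image of $Q_V$ in the quasi-simple group $G/N_v$ is compact and normal, and every non-trivial closed normal subgroup of $G/N_v$ contains the non-compact monolith $M_v/N_v$ and is hence non-compact; this forces the image to be trivial, so $Q_V \subseteq N_v$ and therefore $Q_V \subseteq \bigcap_v N_v = 1$. The conclusion of Proposition~\ref{prop:FilterNormalSubgroups} then specialises to: \emph{any filtering family of non-discrete closed normal subgroups of $G$ has non-trivial intersection}. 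Applied to $\{N_F\}$, some $N_{F_0}$ must be discrete.

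The heart of the argument is to show $N_{F_0} = 1$. Fix $v \in \Sigma$ and set $H := G/N_v$, $M := M_v/N_v$. The monolith $M$ is compactly generated, totally disconnected, topologically simple, non-compact and non-discrete; Proposition~\ref{prop:BEW} applied to $M$ shows $\QZ(M)$ cannot be dense (otherwise $M$ would admit a compact open normal subgroup, ruled out by topological simplicity together with non-compactness and non-discreteness), so $\QZ(M) = 1$ by topological simplicity. Moreover $M$ is non-abelian (a topologically simple locally compact abelian group is finite cyclic of prime order, contradicting non-discreteness), so $Z(M) = 1$; the centraliser $C_H(M)$ is then a closed normal subgroup of $H$ with $C_H(M) \cap M = 1$, and quasi-simplicity of $H$ forces $C_H(M) = 1$. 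Since every element of the discrete normal subgroup $N_{F_0}$ has open centraliser in $G$ (a property passing to the quotient), the image $N_0$ of $N_{F_0}$ in $H$ lies in $\QZ(H)$; normality of $N_0$ and $M$ in $H$ then gives $[N_0, M] \subseteq N_0 \cap M \subseteq \QZ(H) \cap M \subseteq \QZ(M) = 1$, so $N_0 \subseteq C_H(M) = 1$. As $v$ was arbitrary, $N_{F_0} \subseteq \bigcap_v N_v = 1$.

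The same centraliser argument applied to any hypothetical non-trivial discrete normal subgroup of $G$ shows $G$ itself has no such subgroup, so together with the triviality of compact normals, Proposition~\ref{prop:MinimalNormal} supplies a finite set $\mathcal{N}$ of minimal non-trivial closed normal subgroups of $G$. A commutator argument then shows that for each $v$ at most one $\mathcal{N}_{i(v)} \in \mathcal{N}$ fails to lie in $N_v$: if two distinct $\mathcal{N}_i, \mathcal{N}_j$ both had images whose closures in $G/N_v$ contained $M_v/N_v$, then since $[\mathcal{N}_i, \mathcal{N}_j] \subseteq \mathcal{N}_i \cap \mathcal{N}_j = 1$, the monolith $M_v/N_v$ would be abelian, contradicting its non-abelianness. \emph{The main obstacle} is then the final fibre-counting step: bounding each fibre $\Sigma_i = \{v : i(v) = i\}$. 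The plan is to descend to $\tilde{G}_i := G / \overline{\langle \mathcal{N}_j : j \neq i \rangle}$, whose quasi-simple quotients correspond bijectively to $\Sigma_i$, and iterate the preceding argument there, using that $\tilde{G}_i$ has essentially only one ``simple direction''~$\mathcal{N}_i$ to force $|\Sigma_i| \leq 1$; summing over the finitely many fibres then yields $|\Sigma| < \infty$, contradicting the assumption.
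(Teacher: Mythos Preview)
Your argument coincides with the paper's through the point where you establish that $G$ is totally disconnected, has no non-trivial compact normal subgroup, and has $\QZ(G)=1$ (hence no non-trivial discrete normal subgroup). The paper's proof stops there: once these facts are in hand, Proposition~\ref{prop:FilterNormalSubgroups} applied (with $Q_V=1$) to the filter $\mathscr{F}$ generated by $\{N_v\}$ yields directly that $\mathscr{F}$ is finite, and since the $N_v$ are pairwise distinct elements of $\mathscr{F}$, so is $\Sigma$. No appeal to minimal normal subgroups or fibre-counting is made.

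Your steps 7--9, by contrast, launch an independent and more intricate strategy, and it is here that the proposal is incomplete. Several issues arise. First, the map $v\mapsto i(v)$ need not be total: your commutator argument gives $|I_v|\le 1$, but nothing prevents $I_v=\emptyset$ (all minimal normals contained in $N_v$), so there is an unaccounted-for class $\Sigma_\emptyset$. Second, even restricting to a fibre $\Sigma_i$, the descent to $\tilde G_i = G/\overline{\langle \mathcal N_j : j\ne i\rangle}$ does not put you back in the original situation: you would need $\bigcap_{v\in\Sigma_i} N_v = \overline{\langle \mathcal N_j : j\ne i\rangle}$ to have trivial intersection in $\tilde G_i$, and this equality is not established. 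If instead you pass to $G/K_i$ with $K_i=\bigcap_{v\in\Sigma_i}N_v$, then the hypotheses of Proposition~\ref{prop:MinimalNormal} are recovered, but there is no visible reason why the number of minimal normals decreases, so the iteration has no terminating invariant. Your own phrasing (``the main obstacle'', ``the plan is to\ldots'') acknowledges that this step is a sketch rather than a proof. A minor further point: your reduction to the totally disconnected case via a ``Lie-theoretic dimension bound'' is looser than the paper's, which first shows that $G$ has no non-trivial compact or soluble normal subgroup, deduces that $G^\circ$ is centrefree semi-simple, and then splits off $G^\circ$ as a direct factor of a finite-index open subgroup.
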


\begin{proof}
We write $H_v:=G/N_v$. By hypothesis each $H_v$ is monolithic with
simple cocompact monolith, which we denote by $S_v$.
We claim that $G$ has no non-trivial compact normal subgroup. Let indeed
$Q \lhd G$ be a compact normal subgroup
of $G$. By the assumptions made on $H_v$, the image of $Q$ in $H_v$ is
trivial for each $v \in \Sigma$.
Thus $Q \se \bigcap_{v \in \Sigma} N_v$ and hence $Q$ is trivial.

The same line of argument shows that $G$ has no non-trivial soluble
normal subgroup. In particular, the identity component $G^\circ$ is
a connected semi-simple Lie group with trivial centre and no compact
factor, see Lemma~\ref{lem:LF:connected}. Such a Lie group $G^\circ$
is the direct product of its simple factors. Moreover, $G$ has an
open characteristic subgroup of finite index which splits as a
direct product of the form $G^\circ \times D$ determining some
compactly generated totally disconnected group $D$, see \emph{e.g.}
(the proof of) Theorem~11.3.4 in~\cite{Monod_LN}.

The identity component of each $H_v$ coincides with the image of
$G^\circ$ (since any quotient of a totally
disconnected group is totally disconnected). Thus, whenever $H_v$ is not
totally disconnected, the hypothesis
implies $S_v=H_v^\circ$ and $H_v$ is a profinite extension of one of the
simple factors of $G^\circ$, and that
each factor appears once.

At this point, we can and shall assume that $G$ is totally
disconnected.

\smallskip

In view of Proposition~\ref{prop:discrete_normal} and the assumption
made on $H_v$, the group $S_v$ is non-discrete for each $v \in
\Sigma$. In particular, it follows that $H_v$ has trivial
quasi-centre.

Since the image of $\QZ(G)$ in $H_v$ is contained $\QZ(H_v)$, we
deduce that $\QZ(G)N_v = N_v$ for all $v \in \Sigma$. In other
words, we have $\QZ(G) < \bigcap_{v \in \Sigma} N_v = 1$ and we
conclude that $G$ has trivial quasi-centre.

Let now $\mathscr{F}$ be the filter of closed normal subgroups
of $G$ generated by $\{N_v \; | \; v \in \Sigma\}$. Since $G$ has no
compact non-trivial normal subgroup and no non-trivial discrete normal
subgroup (as $\QZ(G) =1$), we deduce from
Proposition~\ref{prop:FilterNormalSubgroups} that $\mathscr{F}$ is
finite. Thus $\Sigma$ is finite as well, as desired.
\end{proof}

\subsection*{Maximal normal subgroups}

We shall need the following statement due to R.~Grigorchuk and
G.~Willis; since it is unpublished, we provide a proof for the
reader's convenience.

\begin{prop}\label{prop:GriWi}
Let $G$ be a totally disconnected compactly generated non-compact
locally compact group. Then $G$ admits a non-compact quotient with
every proper quotient compact.
\end{prop}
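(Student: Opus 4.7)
The plan is to apply Zorn's lemma to the poset $\mathcal{N}$ of closed normal subgroups $N \lhd G$ such that $G/N$ is non-compact, ordered by inclusion. This poset contains $\{1\}$ by hypothesis, and any maximal element $N_0 \in \mathcal{N}$ finishes the proof: $G/N_0$ is non-compact by definition, while every proper Hausdorff quotient of $G/N_0$ corresponds to a closed normal subgroup of $G$ strictly containing $N_0$, hence is compact by maximality. The whole argument thus reduces to verifying Zorn's chain condition.

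Fix a chain $(N_\alpha)$ in $\mathcal{N}$ and set $N = \overline{\bigcup_\alpha N_\alpha}$; one must show $G/N$ is non-compact. By van~Dantzig's theorem, $G$ admits a compact open subgroup $U$. For any closed normal subgroup $M \lhd G$, the set $UM$ is an open (hence clopen) subgroup of $G$ and $UM/M \cong U/(U \cap M)$ is compact, so $G/M$ is compact if and only if $[G:UM] < \infty$. The key observation is the identity
$$UN \;=\; \bigcup_\alpha UN_\alpha\,,$$
which holds because the right-hand side is an ascending union of clopen subgroups, so is itself a clopen subgroup containing $U$ and every $N_\alpha$, hence containing $\overline{\bigcup_\alpha N_\alpha} = N$; the reverse inclusion is trivial.

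Now suppose, for contradiction, that $G/N$ is compact, i.e. $H := UN$ has finite index in $G$. Then $H$ is cocompact in the compactly generated group $G$, so $H$ is itself compactly generated by Lemma~\ref{lem:cocompact}; let $\Omega \subset H$ be a compact generating set. The ascending open cover $\{UN_\alpha\}$ of the compact set $\Omega$ must contain a single member engulfing $\Omega$, so $\Omega \subseteq UN_\alpha$ for some $\alpha$. Since $UN_\alpha$ is a subgroup, it contains $\langle \Omega \rangle = H$, so $UN_\alpha = H$ has finite index in $G$ and $G/N_\alpha$ is compact, contradicting $N_\alpha \in \mathcal{N}$. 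The crux of the argument is the identity $UN = \bigcup_\alpha UN_\alpha$: it converts the chain of closed normal subgroups into a chain of clopen subgroups of $G$, where compact generation of the cocompact subgroup $UN$ forces stabilisation.
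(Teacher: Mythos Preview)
Your proof is correct and follows essentially the same route as the paper's: apply Zorn's lemma to non-cocompact closed normal subgroups, and for a chain use a van~Dantzig compact open subgroup $U$ together with compact generation to force some $UN_\alpha$ to already be cocompact. The only cosmetic difference is that the paper argues with the compactly generated group $M=\overline{\bigcup N_\alpha}$ and its open cover $\{N_\alpha U\}$, whereas you pass to the finite-index subgroup $UN$ and argue there; your explicit justification of $UN=\bigcup_\alpha UN_\alpha$ is a detail the paper leaves implicit.
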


\begin{proof}
By Zorn's lemma, it suffices to prove that for any chain $\mathscr
H$ of non-cocompact closed normal subgroups $H\lhd G$, the group
$M=\overline{\bigcup_{H\in \mathscr H} H}$ is still non-cocompact.
If not, then $M$ is compactly generated. Therefore, choosing a
compact open subgroup $U<G$, the chain $\{H.U\}$ of groups is an
open covering of $M$, whence there is $H\in \mathscr H$ with
$H.U\supseteq M$. Then this $H$ is cocompact, which is absurd.
\end{proof}

\begin{remark} \label{rem:GriWi}
In view of the structure theory of connected
groups~\cite{Montgomery-Zippin}, the above Proposition holds also
true in the non-totally-disconnected case.
\end{remark}

The following is a dual companion to
Proposition~\ref{prop:MinimalNormal}. Additional information in this
direction will be provided in Proposition~\ref{prop:MinMax} in
Appendix~\ref{app:B} below.

\begin{prop}\label{prop:MaximalNormal}
Let $G$ be a compactly generated totally disconnected locally
compact group which possesses no infinite discrete quotient, and let
$H = \Res(G)$ be the discrete residual of $G$. Then every proper
closed normal subgroup of $H$ is contained in a maximal one, and
the set $\mathscr N$ of proper maximal closed normal subgroups is
finite.
\end{prop}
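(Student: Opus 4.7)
The plan is to extract from the preceding results the three key properties of $H=\Res(G)$ that make the proof go through: by Theorem~\ref{thm:OpenNormal} together with Lemma~\ref{lem:cocompact}, $H$ is cocompact in $G$ and compactly generated, and Corollary~\ref{cor:MaxCptQuotient} ensures that $H$ admits no non-trivial discrete nor compact quotient. If $H$ is trivial then $\mathscr N=\varnothing$ and there is nothing to prove, so one may assume $H$ is non-trivial; the absence of non-trivial compact quotients then forces $H$ to be non-compact.

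For the first assertion, I would apply Zorn's lemma to the poset of proper closed normal subgroups of $H$ containing a given proper closed normal subgroup $N_0$. Given a chain $\mathscr C$, consider the closure $M=\overline{\bigcup_{N\in\mathscr C} N}$; the point is to show $M\neq H$. If on the contrary $M=H$, I would pick a compact open subgroup $U<H$ (available since $H$ is totally disconnected locally compact) and a compact generating set $S\supseteq U$ of $H$. Density of $\bigcup_{N\in\mathscr C} N$ combined with the openness of $U$ yields $H=\bigcup_{N\in\mathscr C} NU$, so compactness of $S$ and the chain structure produce a single $N'\in \mathscr C$ with $S \se N'U$. Since $N'U$ is a subgroup (as $N'$ is normal in $H$), one concludes $H=\langle S\rangle\se N'U$, forcing $H/N'$ to be compact---in contradiction with Corollary~\ref{cor:MaxCptQuotient} because $N'$ is a proper normal subgroup of $H$.

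For the finiteness of $\mathscr N$, I would observe that for each $N\in\mathscr N$ the maximality of $N$ forces $H/N$ to be topologically simple, hence quasi-simple, while the absence of non-trivial compact and discrete quotients in $H$ descends to $H/N$, making it in addition non-compact and non-discrete. Setting $K=\bigcap_{N\in\mathscr N} N$ and passing to the compactly generated locally compact group $H/K$, the family $\{N/K \mid N\in\mathscr N\}$ then consists of pairwise distinct closed normal subgroups with trivial intersection, each with quasi-simple, non-discrete, non-compact quotient $(H/K)/(N/K)\cong H/N$. Corollary~\ref{cor:SimpleQuotients} applied to $H/K$ directly yields that $\mathscr N$ is finite.

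The step I expect to require the most care is the Zorn argument: one must secure the conclusion that the upper bound $M$ remains proper, and this is where the specific property of $H$ furnished by Corollary~\ref{cor:MaxCptQuotient} is indispensable, since in a general compactly generated locally compact group an ascending chain of proper closed normal subgroups may well accumulate to the whole group. The argument used here is the same compactness trick as in the proof of Proposition~\ref{prop:GriWi}. Once properness is established, the finiteness of $\mathscr N$ reduces cleanly to the dual Corollary~\ref{cor:SimpleQuotients} and requires no further invocation of Proposition~\ref{prop:FilterNormalSubgroups}.
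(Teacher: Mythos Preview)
Your proof is correct and follows essentially the same route as the paper: establish that $H=\Res(G)$ is compactly generated with no non-trivial compact or discrete quotient, run the Zorn argument exactly as in Proposition~\ref{prop:GriWi} to obtain maximal closed normal subgroups, and then deduce finiteness of $\mathscr N$ from Corollary~\ref{cor:SimpleQuotients} applied to $H/\bigcap_{N\in\mathscr N} N$. The only cosmetic difference is that the paper derives the absence of non-trivial compact quotients directly from Theorem~\ref{thm:OpenNormal} (via profiniteness of totally disconnected compact quotients) rather than citing Corollary~\ref{cor:MaxCptQuotient}, and leaves the passage to $H/K$ implicit.
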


\begin{proof}
By Theorem~\ref{thm:OpenNormal}, the discrete residual $H$ is
cocompact in $G$, hence compactly generated. Furthermore it has no
non-trivial finite quotient. Since $H$ is totally disconnected, any
compact quotient would be profinite, and we infer that $H$ has no
non-trivial compact quotient. Now the same argument as in the proof
of Proposition~\ref{prop:GriWi} using Zorn's lemma shows that every
proper  closed normal subgroup of $H$ is contained in a maximal one.

Let $\mathscr N$ denote the collection of all these. Any quotient
$H/N$ being topologically simple, hence quasi-simple, the finiteness
of $\mathscr N$ follows readily from Corollary~\ref{cor:SimpleQuotients}.
\end{proof}

\subsection*{Upper and lower structure}

\begin{proof}[Proof of Theorem~\ref{thm:structure:first}]
We assume throughout that $G$ is non-compact since otherwise~(ii)
holds trivially. Assume first that $G$ is almost connected. In
particular the neutral component $G^\circ$ coincides with the
discrete residual of $G$. Let $R$ denote the maximal connected
soluble normal subgroup of $G$; this \textbf{soluble radical}\index{soluble radical}\index{radical!soluble}
is indeed well defined even if $G$ is not a Lie group as proved by K.~Iwasawa
(Theorem~15 in~\cite{Iwasawa49}; see also~\cite[(3.7)]{Paterson}. If
$R$ is cocompact we are in case (ii) of the Theorem. Otherwise $G$
is not amenable and using the structure theory of connected groups
(notably Theorem~4.6 in~\cite{Montgomery-Zippin}), we deduce that
$G^\circ/R$ possesses a non-compact (Lie\mbox{-})simple factor, so
that all assertions of the case~(iii) of the Theorem are satisfied.

We now assume that $G$ is not almost connected. If $G$ admits an
infinite discrete quotient we are in case~(i) of the Theorem. We
assume henceforth that $G$ has no infinite discrete quotient. In
particular its discrete residual $G^+$ is cocompact and admits
neither non-trivial discrete quotients  nor disconnected compact
quotients, see Corollary~\ref{cor:MaxCptQuotient}. Moreover $G^+$ is
compactly generated, non-compact and contains the identity component
$G^\circ$.

Let $\mathcal S$ be the collection of all topologically simple
quotients of $G^+$. Applying Corollary~\ref{cor:SimpleQuotients} to
the quotient group $G^+/K$, where $K = \bigcap_{S \in \mathcal S}
\Ker(G^+ \to S)$, we deduce that $\mathcal S$ is finite. Thus the
assertion~(iii) of Theorem~\ref{thm:structure:first} will be
established provided we show that $\mathcal S$ is non-empty.

To this end, it suffices to prove that the group of components
$G^+/G^\circ$ admits some non-compact topologically simple quotient.
But this follows from Proposition~\ref{prop:MaximalNormal} since any
quotient of $G^+/G^\circ$ is non-compact and since each
topologically simple quotient is afforded by a maximal closed normal
subgroup.
\end{proof}

\begin{proof}[Proof of Theorem~\ref{thm:structure:lower}]
We assume that assertions~(i) and~(ii) of the Theorem fail. Note
that if $G$ is totally disconnected, then
Proposition~\ref{prop:MinimalNormal} finishes the proof.

As is well known (see the proof of
Corollary~\ref{cor:SimpleQuotients}), the non-existence of
non-trivial compact (resp. connected soluble) normal subgroups
implies that $G$ possesses a characteristic open subgroup  of finite
index $G^+<G$ which splits as a direct product of the form $G^+
 = G^\circ \times D$, where $G^\circ$ is a semi-simple Lie group and
$D$ is totally disconnected.

Notice that $G/G^\circ$ is a totally disconnected locally compact group
which might possess non-trivial finite normal subgroups. In order to
remedy this situation, we shall now exhibit a closed normal subgroup
$G_1 \leq G$ containing $G^\circ$ as a finite index subgroup and such
that $G/G_1$ has no non-trivial compact or discrete normal subgroup.

Let $N$ be a closed normal subgroup of $G$ containing $G^\circ$.
Then $N^+ = N \cap G^+$ is a finite index subgroup which decomposes
as a direct product of the form $N^+ \cong G^\circ \times (D \cap
N)$. If the image of $N$ in $G/G^\circ$ is compact (resp. discrete),
then $D \cap N$ is a compact (resp. discrete) normal subgroup of
$G$, and must therefore be trivial, since otherwise assertion (ii)
(resp.~(i)) would hold true. We deduce that any compact (resp.
discrete) normal subgroup of $G/G^\circ$ is finite with order
bounded above by $[G:G^+]$. In particular, there is a maximal such
normal subgroup, and we denote by $G_1$ its pre-image in $G$.

Since $G_1 \cap D$ injects into $G_1/G^\circ$, it is a finite normal
subgroup of $G$ and must therefore be trivial. Moreover $G^+ =
G^\circ D$ is closed in $G$. Thus  $D$ has closed image in
$G/G^\circ$, whence in $G/G_1$ since the canonical projection
$G/G^\circ \to G/G_1$ is proper, as it has finite kernel. This
implies that $D G_1$ is closed in $G$. In other words $\la D \cup
G_1 \ra$ is a  characteristic closed subgroup of finite index in $G$
which is isomorphic to $G_1 \times D$.

It follows at once that there is a canonical one-to-one
correspondence between the closed normal subgroups of $G$ contained
in $D$ and the closed normal subgroups of $G/G_1$ contained in
$DG_1/G_1$.

Now $G/G_1$ is a compactly generated totally disconnected locally
compact group without non-trivial compact or discrete normal
subgroup, and Proposition~\ref{prop:MinimalNormal} guarantees that
the set $\mathscr M_1$ of its non-trivial minimal closed normal
subgroups is finite and non-empty. Moreover, an element of $\mathscr
M_1$ does not possess any non-trivial finite index closed normal
subgroup and must therefore be contained in $DG_1/G_1$. Similarly,
any minimal closed normal subgroup of $G$ must be contained in
$G^+$.

Since any minimal closed normal subgroup of $G$ is either connected
or totally disconnected, and since the connected ones are nothing
but (regrouping of) simple factors of $G^\circ$, we finally obtain a
canonical one-to-one correspondence between $\mathscr M_1$ and the
set of non-trivial minimal closed normal subgroups of $G$ which are
totally disconnected. The desired conclusion follows since, as
observed above, the set $\mathscr M_1$ is finite and non-empty.
\end{proof}

\begin{proof}[Proof of Corollary~\ref{cor:CharSimple}]
Assume $G$ is not discrete. The discrete residual $\Res(G)$ is
characteristic. If $\Res(G)= 1$ then $G$ is residually discrete and
hence, its LF-radical is open by Corollary~\ref{cor:ResDiscrete}.
Since the LF-radical is characteristic and $G$ is not discrete, we
deduce that $G$ is topologically locally finite, hence compact since
it is compactly generated.

We assume henceforth that $\Res(G) = G$ and that $G$ is not compact.
The above argument shows moreover that $G$ has trivial LF-radical
and trivial quasi-centre.

If $G$ is not totally disconnected, then it is connected. If this is the
case, the LF-radical of $G$ is compact
(see Lemma~\ref{lem:LF:connected}) hence trivial, and we deduce that $G$
is a Lie group. In this case, the
standard structure theory of connected Lie groups allows one to show
that either $G \cong \RR^n$ or $G$ is a
direct product of pairwise isomorphic simple Lie groups.

Assume finally that $G$ is totally disconnected. Then
Proposition~\ref{prop:MinimalNormal} guarantees that the set
$\mathscr M$ of non-trivial minimal closed normal subgroups of $G$
is finite and non-empty. Moreover, since for any proper subset
$\mathscr E \subset \mathscr M$, the subgroup $\overline{\la M \; |
\; M \in \mathscr E \ra}$ is properly contained in $G$, it follows
that $\Aut(G)$ acts transitively on $\mathscr M$.

Now we conclude as in the proof of Theorem~\ref{thm:monolithic} that
$G$ is a quasi-product with the elements of $\mathscr M$ as
topologically simple quasi-factors.
\end{proof}

\section{Composition series with topologically simple subquotients}

We start with an elementary decomposition result on quasi-products.

\begin{lem}\label{lem:DecomposeQuasiProduct}
Let $G$ be a locally compact group which is a quasi-product with
infinite topologically simple quasi-factors $M_1, \dots, M_n$.
Then $G$ admits a sequence of closed normal subgroups
$$1 = Z_0 < G_1 < Z_1 < G_2 < Z_2 < \cdots < Z_{n-1} < G_n = G,$$
where for each $i=1, \dots, n$, the subgroup $G_i $ is defined as
$G_i= \overline{ Z_{i-1} M_i}$, the subquotient  $G_i /Z_{i-1}$ is
topologically simple and $Z_i/G_i = \centra(G/G_i)$.
\end{lem}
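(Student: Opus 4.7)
The plan is to construct the sequence using the stated recursion and verify topological simplicity of each subquotient $G_i/Z_{i-1}$. The key technical input is the following \emph{commutation claim}, proved by induction on $j$: \emph{$Z_{j-1}$ centralises $M_i$ for every $i \geq j$.} The base $j=1$ is trivial since $Z_0 = 1$. For the step, $G_j = \overline{Z_{j-1} M_j}$ is contained in $\centra_G(M_i)$ whenever $i > j$, since both $Z_{j-1}$ (by induction) and $M_j$ (because the injectivity of $M_1 \times \cdots \times M_n \to G$ forces $[M_j, M_i] \subseteq M_j \cap M_i = 1$) centralise such $M_i$. The defining property $[Z_j, G] \subseteq G_j$ then gives $[Z_j, M_i] \subseteq M_i \cap G_j \subseteq \centra(M_i) = 1$; the last equality uses that $M_i$, being an infinite topologically simple locally compact group, is non-abelian, hence has trivial (closed) centre. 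Specialising $j = i$ yields $M_i \cap Z_{i-1} = 1$, so $\widetilde{M}_i := M_i Z_{i-1}/Z_{i-1}$ is isomorphic to $M_i$ as an abstract group, and is a normal topologically simple subgroup of $\widetilde{G} := G/Z_{i-1}$ (its subspace topology is coarser than the intrinsic one on $M_i$, and coarser topologies preserve topological simplicity).

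With this in hand, $G_n = G$ is immediate: the chain $M_j \subseteq G_j \subseteq G_{j+1} \subseteq \cdots \subseteq G_n$ shows $G_n \supseteq M_j$ for each $j$, so the closed set $G_n$ contains $\overline{\prod_j M_j} = G$. Topological simplicity of $G_1/Z_0 = G_1$ is also immediate: since $M_1$ is closed by definition of quasi-factor, $G_1 = \overline{M_1} = M_1$ itself is already topologically simple. For $i \geq 2$, write $\pi = \pi_{i-1}$ and $\widetilde{G}_i := G_i/Z_{i-1}$; then $\widetilde{G}_i = \pi(\overline{Z_{i-1} M_i}) = \overline{\widetilde{M}_i}$. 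For any non-trivial closed normal $\widetilde{N} \lhd \widetilde{G}_i$, the closed normal subgroup $\widetilde{N} \cap \widetilde{M}_i$ of $\widetilde{M}_i$ is either $\widetilde{M}_i$ (whence $\widetilde{N} \supseteq \overline{\widetilde{M}_i} = \widetilde{G}_i$, as wanted) or trivial; in the latter case, two closed normal subgroups of $\widetilde{G}_i$ with trivial intersection commute, so $\widetilde{N}$ centralises $\widetilde{G}_i$ and therefore $\widetilde{N} \subseteq \centra(\widetilde{G}_i)$.

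The main obstacle, then, is to rule out this last alternative by proving $\centra(\widetilde{G}_i) = 1$. Since $\widetilde{G}_i$ is normal in $\widetilde{G}$, the centre $\centra(\widetilde{G}_i) = \widetilde{G}_i \cap \centra_{\widetilde{G}}(\widetilde{G}_i)$ is itself normal in $\widetilde{G}$, so its preimage $C \lhd G$ is a closed normal subgroup of $G$ with $Z_{i-1} \subseteq C \subseteq G_i$ and $[C, G_i] \subseteq Z_{i-1}$. The decisive computation is to verify $[C, M_j] \subseteq G_{i-1}$ for every $j$: for $j = i$ this is $[C, M_i] \subseteq Z_{i-1} \cap M_i = 1$; for $j < i$ the inclusion $M_j \subseteq G_j \subseteq G_{i-1}$ combined with normality of $M_j$ gives $[C, M_j] \subseteq M_j \subseteq G_{i-1}$; and for $j > i$ the commutation claim yields $G_i \subseteq \centra_G(M_j)$, so $[C, M_j] = 1$. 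The set $\{g \in G : [C, g] \subseteq G_{i-1}\}$ is then a closed subgroup of $G$ (by continuity of the commutator map and closedness of $G_{i-1}$) that contains each $M_j$, hence it contains the dense subgroup $\prod_j M_j$ and therefore equals $G$. Thus $[C, G] \subseteq G_{i-1}$, and by the definition of $Z_{i-1}$ this forces $C \subseteq Z_{i-1}$, i.e.\ $\centra(\widetilde{G}_i) = 1$, completing the proof.
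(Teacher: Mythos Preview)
Your proof is correct and follows essentially the same strategy as the paper's: both establish inductively that $G_j$ (and $Z_{j-1}$) centralise $M_i$ for $i>j$ (resp.\ $i\geq j$), then use density of the $M_j$'s to force any closed normal subgroup of $G_i$ strictly between $Z_{i-1}$ and $G_i$ down into $Z_{i-1}$. Your intermediate reduction to $\centra(G_i/Z_{i-1})=1$ is a mild reorganisation of the paper's direct argument, with the pleasant feature that the preimage $C$ is automatically normal in all of $G$; one harmless imprecision is the phrase ``two closed normal subgroups'' --- $\widetilde M_i$ need not be closed in $\widetilde G_i$, but only its normality is actually used in concluding $[\widetilde N,\widetilde M_i]\subseteq\widetilde N\cap\widetilde M_i$.
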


\begin{proof}
The requested properties of the normal series provide in fact a
recursive definition for the closed normal subgroups $Z_i$ and $G_i$.
In particular all we need to show is that $Z_{i-1}$ is a maximal proper
closed normal subgroup
of $G_i$.

We first claim that $M_j \cap G_{i-1} =1$ for all $1 \leq i < j$,  where
it is understood that $G_0 = 1$. Since no $M_j$ is Abelian, this amounts
to showing that $G_{i-1} \leq \centra_G(M_i M_{i+1} \dots M_n)$. We proceed
by induction on $i$, the base case $i=1$ being trivial. Now we need to
show that $M_i$ and $Z_{i-1}$ are both contained in $ \centra_G(M_{i+1}
\dots M_n)$.  This is clear for $M_i$. By induction $M_i \dots M_n$ maps
onto a dense normal subgroup of $G/G_{i-1}$. Since $Z_{i-1}/G_{i-1} =
\centra(G/G_{i-1})$ by definition, we infer that $[Z_{i-1}, M_j] \leq
G_{i-1}$ for all $j \geq i$. Of course we have also $ [Z_{i-1}, M_j]
\leq M_j$ since $M_j$ is normal. The intersection $G_{i-1} \cap M_j$
being trivial by induction, we infer that $[Z_{i-1}, M_j]$ is trivial as
well. Thus $M_i$ and $Z_{i-1}$ are indeed both contained in $
\centra_G(M_{i+1} \dots M_n)$, and so is thus $G_i$. The claim stands
proven.

Let now $N$ be a closed normal
subgroup $G$ such that $Z_{i-1} \leq N < G_i$. By the claim we have
$G_i \cap M_j = 1$ for all $j >i$, hence $N \cap M_j = 1$. Now if $N
\cap M_i \neq 1$, then $M_i < N$ since $M_i$ is topologically
simple. In that case, we deduce that $N$ contains $ Z_{i-1} M_i$,
which contradicts that $N$ is properly contained in $G_i$. Thus we
have $N \cap M_i = 1$. In particular we deduce that  $N \leq \centra_G(M_i
M_{i+1} \dots M_n)$. Since $M_i M_{i+1} \dots M_n$ maps densely into
$G/G_{i-1}$, we deduce that the image of $N$ in $G/G_{i-1}$ is
central. By definition, this means that $N$ is contained in $Z_{i-1}$,
thereby proving that $Z_{i-1}$ is indeed maximal normal in $G_i$.
\end{proof}

\begin{proof}[Proof of Theorem~\ref{thm:CompositionSeries}]
In view of the structure theory of connected locally compact groups
(see Lemma~\ref{lem:LF:connected}) and of connected Lie groups, the
desired result holds in the connected case. Moreover, any
homomorphic image of a Noetherian group is itself Noetherian.
Therefore, there is no loss of generality in replacing $G$ by the
group of components $G/G^\circ$. Equivalently, we shall assume
henceforth that $G$ is totally disconnected.

We first claim that any closed normal subgroup of $G$ is compactly
generated. Indeed, given such a subgroup $N < G$, pick any compact
open subgroup $U$ and consider the open subgroup $NU < G$. Since $N$
is a cocompact subgroup of $NU$, which is compactly generated as $G$
is Noetherian, we infer that $N$ itself is compactly generated, as
claimed. Notice that the same property is shared by closed normal
subgroups of any open subgroup of $G$.

Let now $\Res(G)$ denote the discrete residual of $G$. Thus
$G/\Res(G)$ is residually discrete and Noetherian.
Corollary~\ref{cor:ResDiscrete} thus implies that the LF-radical of
$G/\Res(G)$ is open, while the above claim guarantees that it is
compact. We denote by $O$ the pre-image in $G$ of $\LF(G/\Res(G))$.
Thus $O$ is an open characteristic subgroup of $G$ containing
$\Res(G)$ as a cocompact subgroup. In particular, the discrete
quotients of $O$ are all finite. Now Theorem~\ref{thm:OpenNormal}
guarantees that $\Res(G) = \Res(O)$ has no non-trivial discrete
quotient.

\smallskip%
Setting $H = \Res(G)$, we have thus far constructed a series $1 < H
< O < G$ of characteristic subgroups with $O$ open and $O/H$
compact. We shall now construct inductively a finite increasing
sequence
$$1 = H_0 < H_1 < H_2 < \cdots < H_l = H<O$$
of normal subgroups of $O$ satisfying the following conditions for
all $i = 1, \dots, l$:
\begin{itemize}
\item[(a)] If $\LF(H/H_{i-1})$ is non-trivial, then $\LF(G/H_i) = 1$.

\item[(b)]
$H_i/H_{i-1}$ is either compact, or isomorphic to
$\ZZ^n$ for some $n$, or to a quasi-product with topologically
simple pairwise $O$-conjugate quasi-factors.
\end{itemize}

Let $j > 0$ and assume that the first $j-1$ terms $H_0, \dots, H_{j-1}$
of the desired series have already been
constructed, in such a way that properties (a)  and (b) hold with $i<j$.
We proceed to define $H_j$ as
follows.

\smallskip

If $\LF(H/H_{j-1})$ is non-trivial, then we let $H_j$ be the
pre-image in $H$ of $\LF(H/H_{j-1})$. Properties (a) and (b) clearly
hold for $j=i$ in this case.

\smallskip

Assume now that $\LF(H/H_{j-1}) = 1$ and  that $\QZ(H/H_{j-1})$ is
non-trivial.  Let $M$ denote the closure of $\QZ(H/H_{j-1})$ in
$H/H_{j-1}$. Thus $M$ is characteristic and quasi-discrete.
Furthermore, the fact that $\LF(H/H_{j-1}) = 1$ implies that $M$ has
no non-trivial compact normal subgroup. Therefore
Corollary~\ref{cor:RmZn} ensures that $M = \QZ(H/H_{j-1})$ and that
the identity component $M^\circ$ is open and isomorphic to $\RR^n$
for some $n$.

Since $H$ is totally disconnected, it follows that $M^\circ$ is
trivial. Thus $M$ is totally disconnected as well, hence
compact-by-discrete in view of Proposition~\ref{prop:BEW}. But $M$
has no non-trivial compact normal subgroup since  $\LF(H/H_{j-1})$
is trivial and it follows that $M$ is discrete. We claim that $M$ is
Abelian. Indeed, since $M$ is discrete and finitely generated, its
centraliser in $H/H_{j-1}$ is open. By assumption $H$ has no
non-trivial discrete quotient, and this property is inherited by the
quotient $ H/H_{j-1}$. We deduce that $\centra_{H/H_{j-1}}(M) =
H/H_{j-1}$; in other words $M$ is central in $H/H_{j-1}$ hence
Abelian, as claimed. Let $M_0$ denote the unique maximal free
Abelian subgroup of $M$. Then $M_0$ is non-trivial since $M$ is not
compact. We define $H_j$ to be the pre-image of $M_0$ in $H$. Then
$H_j$ is characteristic and again, properties (a) and (b) are both
satisfied with $i=j$ in this case.

\smallskip

It remains to define $H_j$ in the case where the LF-radical
$\LF(H/H_{j-1})$ and the quasi-centre $\QZ(H/H_{j-1})$ are both
trivial. In that case, Proposition~\ref{prop:MinimalNormal}
guarantees that $H/H_{j-1}$ contains some non-trivial minimal closed
normal subgroups of $O/H_{j-1}$, say $M$, provided $H/H_{j-1}$ is
non-trivial. Clearly $M$ is characteristically simple, so that, by
Corollary~\ref{cor:CharSimple}, it is a quasi-product with finitely
many topologically simple quasi-factors. Now $O$ acts
transitively by conjugation on these quasi-factors, otherwise $M$
would contain a proper closed normal subgroup (see
Proposition~\ref{prop:MinimalNormal}), contradicting minimality.
It remains to define $H_j$ as the pre-image of $M$ in $O$.

Hence (a) and (b) hold with $i=j$ in all cases.

\smallskip
We have thus constructed an ascending chain of subgroups $1=H_0 <
H_1 < H_2  < \cdots < H< O$ which are all normal in $O$ and we
proceed to show that $H_k = H$ for some large enough index $k$.
Suppose for a contradiction that this is not the case and set
$H_\infty = \overline{\bigcup_{i=1}^\infty H_i}$. Since $H_\infty$
is normal in $O$, it is compactly generated (see the second
paragraph of the present proof above). Let $V < H_\infty$ be a
compact open subgroup. Then the ascending chain $V\cdot H_1 < V\cdot
H_2 < \cdots$ yields a covering of $H_\infty$ by open subgroups. The
compact generation of $H_\infty$ thus implies that $V\cdot H_k =
H_\infty$ for $k$ large enough. In particular $H_k$ is cocompact in
$H_\infty$. Therefore $\LF(H/H_{k})$ is non-trivial. By property
(a), this implies that $\LF(H/H_{k+1})$ is trivial and hence
$H_\infty \se H_{k+1}$. This contradiction establishes the
claim.

\smallskip%
It only remains to show that the series of characteristic subgroups
$1=H_0 < H_1 < H_2  < \cdots < H_l = H$ that we have constructed can
be refined into a subnormal series satisfying the desired conditions
on the subquotients. By construction, it suffices to refine the
non-compact non-Abelian subquotients $H_i/H_{i-1}$. Since these are
quasi-products with finitely many topologically simple pairwise
$O$-conjugate quasi-factors, we may replace $O$ by an
appropriate closed normal subgroup of finite index, say $O'$, in
such a way that for all $i$, each topologically simple quasi-factor
of $H_i/H_{i-1}$ is normal in $O'/H_{i-1}$. Consider now the decomposition
of $H_i/H_{i-1}$ provided by Lemma~\ref{lem:DecomposeQuasiProduct}.
Each term of this decomposition is normal in $O'/H_{i-1}$, and must
therefore be compactly generated. Therefore, the corresponding
subquotients are compactly generated. In particular, the Abelian
subquotients are compact-by-$\ZZ^k$. Introducing these intermediate
terms in the series $1=H_0 < H_1 < H_2  < \cdots < H_l = H < O' < O
< G$, we obtain a refinement which has all the desired properties.
\end{proof}


\appendix
\renewcommand{\thesection}{\Roman{section}}

\section{The adjoint closure and asymptotically central sequences}

\subsection*{On the Braconnier topology}

Let $G$ be a locally compact group and $\Aut(G)$ denote the group of all
homeomorphic automorphisms of $G$.
There is a natural topology, sometimes called the \textbf{Braconnier
topology}\index{Braconnier topology}, turning $\Aut(G)$ into a Hausdorff topological group;
it is defined by the sub-base of identity neighbourhoods
$$\mathfrak A(K,U) := \big\{\alpha \in \Aut(G) \; | \;  \forall x \in K,
\ \alpha(x) x^{-1} \in U \text{ and } \alpha^{-1}(x) x^{-1} \in U \big\},$$
where $K \se G$ is compact and $U \se G$ is an identity neighbourhood
(see Chap.~IV \S\,1 in~\cite{Braconnier}
or~\cite[Theorem~26.5]{HewittRoss}).

In other words, this topology is the common refinement of the
compact-open topology for automorphisms and their inverses; recall in
addition that a
topological group has canonical uniform structures so that the
compact-open topology coincides with the topology of uniform convergence
on compact sets
(\cite{Braconnier} p.~59 or~\cite{KelleyGTM} \S\,7.11).

\medskip

In fact, the Braconnier topology coincides with the restriction of the
$g$-topology on the group of all homeomorphisms of $G$ introduced by
Arens~\cite{Arens46}, itself hailing from Birkhoff's
$C$-convergence~\cite[\S\,11]{Birkhoff34}. It can alternatively be
defined by restricting
the compact-open topology for the Alexandroff compactification, an idea
originating with van Dantzig and
van der Waerden~\cite[\S\,6]{vanDantzig-vanderWaerden}.

\medskip

Braconnier shows by an example that the compact-open topology itself is
in general too coarse to turn $\Aut(G)$ into a topological
group~\cite[pp.~57--58]{Braconnier}. We shall establish below a basic
dispensation from this fact for the adjoint representation
(Proposition~\ref{prop:CptOpen}). Meanwhile, we recall
that the Braconnier topology coincides with the compact-open topology
when $G$ is compact (Lemma~1 in~\cite{Arens46}) and when $G$ is locally
connected
(Theorem~4 in~\cite{Arens46}). There are of course non-locally-connected
connected groups: the solenoids of Vietoris~\cite[II]{Vietoris27} and
van Dantzig~\cite[\S\,2 Satz~1]{vanDantzig30}. Nevertheless, using
notably the solution to Hilbert's fifth problem, S.P.~Wang showed
that the two topologies still coincide for all connected and indeed
almost connected locally compact groups~\cite[Corollary~4.2]{Wang69}.
Finally, the topologies coincide for $G$ discrete and
$G=\mathbf{Q}_p^n$, see~\cite[p.~58]{Braconnier}.

\medskip

We emphasise that the Braconnier topology on $\Aut(G)$ need not be
locally compact, see~\cite[\S\,26.18.k]{HewittRoss}. A criterion
ensuring that $\Aut(G)$ is locally compact will be presented in
Theorem~\ref{thm:AutG:lc} below in the case of totally
disconnected groups.

Nevertheless, $\Aut(G)$ is a Polish (hence Baire) group when $G$ is
second countable. Indeed,
it is by definition closed (even for the weaker pointwise topology) in
the group of homeomorphisms of $G$ endowed with Arens'
$g$-topology; the latter is second countable (see
\emph{e.g.}~\cite[5.4]{Gleason-Palais}) and complete for the
\emph{bilateral}
uniform structure~\cite[Theorem~6]{Arens46}. Notice that this complete
uniformisation is not the usual left or right uniform structure,
which is known to be sometimes incomplete at least for the group of
homeomorphisms (Arens, \emph{loc.\ cit.}).

The Baire property implies for instance that $\Aut(G)$ is discrete when
countable, which was observed in~\cite[Satz~2]{Plaumann}
for $G$ itself discrete.

\subsection*{Adjoint representation}

Given a closed normal subgroup $N < G$, the conjugation action of
$G$ on $N$ yields a map $G\to \Aut(N)$ which is continuous (see
\cite[Theorem~26.7]{HewittRoss}). In particular, the natural map $\Ad
: G \to \Aut(G)$ induced by the conjugation action is a continuous
homomorphism. We endow the group $\Ad(G)<\Aut(G)$ with the Braconnier
topology.
Thus, a sub-base of identity neighbourhoods is given by the image in
$\Ad(G)$ of all subsets
of $G$ of the form
$$\mathfrak B(K,U) := \big\{g \in G \; | \;  [g,K]  \se U \text{ and
}[g^{-1}, K] \se U\big\},$$
where $(K,U)$ runs over all pairs of compact subsets and identity
neighbourhoods of $G$.

\medskip

As an abstract group, $\Ad(G)$ is isomorphic to $G/\centra(G)$; we
emphasise however that the latter
is endowed with the generally finer quotient topology.

\begin{prop}\label{prop:CptOpen}
Let $G$ be a locally compact group such that the group of components
$G/G^\circ$ is unimodular.

Then the Braconnier topology on $\Ad(G)$ coincides with the compact-open
topology.
\end{prop}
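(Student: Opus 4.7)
One inclusion is automatic: every sub-basic Braconnier neighborhood $\mathfrak{A}(K,U)\cap\Ad(G)$ is the intersection $\{g:[g,K]\subseteq U\}\cap\{g:[g^{-1},K]\subseteq U\}$ of two compact-open sub-basic neighborhoods, so the Braconnier topology is \emph{a priori} finer. The substance of the proposition is the reverse inclusion: given a compact $K_0\subseteq G$ and an identity neighborhood $U_0$, one must exhibit a compact-open neighborhood of the identity in $\Ad(G)$ contained in $\{g:[g^{-1},K_0]\subseteq U_0\}$ (the other defining condition of $\mathfrak{A}(K_0,U_0)$ being itself a compact-open sub-basic neighborhood). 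Conceptually, the task is to show that the inversion map is continuous at the identity for the compact-open topology on $\Ad(G)$: if $g_n x g_n^{-1}\to x$ uniformly in $x$ on compacta, then also $g_n^{-1} x g_n \to x$ uniformly on compacta.

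The proof naturally splits into the almost connected and the totally disconnected parts. The almost connected case is handled by Wang's theorem \cite[Corollary~4.2]{Wang69}, which says that the Braconnier and compact-open topologies already coincide on all of $\Aut(G^\circ)$. Since $G^\circ$ is characteristic in $G$, the conjugation action of $\Ad(G)$ on $G^\circ$ is continuous in either topology, and this handles the contribution of compacta that sit inside (or meet) $G^\circ$.

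The heart of the matter is the totally disconnected, unimodular case; here I would use van Dantzig's theorem to fix a compact open subgroup $V\subseteq U_0$, and a smaller compact open identity neighborhood $W\subseteq V$. The crucial use of unimodularity is a bilateral index bound: the compact-open condition $[g,V]\subseteq W$ forces $gVg^{-1}\subseteq WV$, and $WV$ decomposes into finitely many (say $n=[WV:V]$) left cosets of $V$, so $[V:V\cap gVg^{-1}]\leq n$. The equality $\mu(gVg^{-1})=\mu(V)$ furnished by unimodularity promotes this to $[gVg^{-1}:V\cap gVg^{-1}]\leq n$, and conjugating by $g^{-1}$ yields the symmetric bound $[V:V\cap g^{-1}Vg]\leq n$. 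Consequently, $V^\sharp:=V\cap gVg^{-1}\cap g^{-1}Vg$ is a finite-index subgroup of $V$ (of index bounded in terms of $W$ alone) that is simultaneously mapped into $V$ by conjugation by $g$ and by $g^{-1}$.

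To conclude, cover $K_0$ by finitely many cosets $x_1V,\dots,x_mV$ and impose, on top of $[g,V]\subseteq W$, the finitely many compact-open conditions $[g,x_j^{\pm 1}]\in W$ for $1\le j\le m$. Expanding $[g^{-1},x_jv]$ for $v\in V$ using standard commutator identities, and using both the bilateral control on $V^\sharp$ and the bounded index $[V:V^\sharp]$, one forces $[g^{-1},K_0]\subseteq U_0$ provided $W$ was chosen small enough. To obtain the statement for general $G$, one then glues the totally disconnected case applied to the (unimodular) group $G/G^\circ$ with Wang's theorem applied to $G^\circ$, exploiting that $G^\circ$ is characteristic and closed, and that any compact subset of $G$ is contained in a finite union of cosets $gV_0$ where $V_0$ is an open subgroup with compact image in $G/G^\circ$. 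The main obstacle is the clean deployment of unimodularity to convert a unilateral index bound on $gVg^{-1}$ into a bilateral one; once this symmetry is in hand, the rest is routine coset bookkeeping.
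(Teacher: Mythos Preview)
Your architecture matches the paper's: reduce to the totally disconnected case via Wang's results on the connected part (together with his Proposition~2.3 in~\cite{Wang69} for the gluing), then exploit van Dantzig's compact open subgroups and unimodularity. The totally disconnected step, however, is considerably overworked in your proposal. The paper simply takes a compact open \emph{subgroup} $U'\subseteq U$ and sets $K'=K\cup U'$. Then $g\in\mathfrak B'(K',U')$ gives $[g,U']\subseteq U'$, i.e.\ $gU'g^{-1}\subseteq U'$, and unimodularity promotes this inclusion to equality, so $g$ normalises $U'$. The identity $[g^{-1},x]=g^{-1}[g,x]^{-1}g$ then yields $[g^{-1},K]\subseteq g^{-1}(U')^{-1}g=U'\subseteq U$ in one line.

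The same phenomenon is hiding in your argument: since you chose $W\subseteq V$ with $V$ a \emph{group}, your condition $[g,V]\subseteq W$ already gives $gVg^{-1}\subseteq WV=V$, whence $gVg^{-1}=V$ by unimodularity. So your index $n=[WV:V]$ equals~$1$, the subgroup $V^\sharp$ is just $V$, and the coset covering of $K_0$ by translates $x_jV$ together with the conditions $[g,x_j^{\pm1}]\in W$ is an unnecessary detour. Imposing the single compact-open condition $[g,K_0\cup V]\subseteq V$ suffices and makes the final ``coset bookkeeping'' (which you left vague) disappear entirely.
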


\begin{proof}
Let $\{g_\alpha\}_\alpha$ be a net in $G$ such that $\Ad (g_\alpha)$
converges to the identity in the compact-open topology.
According to the result of S.P.~Wang quoted earlier in this section, the
automorphisms $\Ad(g_\alpha)|_{G^\circ}$ of the identity component
$G^\circ$ converge to the identity for the Braconnier topology on
$\Aut(G^\circ)$. According to Proposition~2.3 in~\cite{Wang69},
it now suffices to prove that the induced automorphisms on $G/G^\circ$
also converge to the identity for the Braconnier topology
on $\Aut(G/G^\circ)$. Therefore, we can suppose henceforth that $G$ is
totally disconnected.

By assumption, $\{g_\alpha\}$ eventually penetrates every set of the form
$$\mathfrak B'(K',U') := \{g \in G \; | \;  [g,K']  \se U'\},$$
where $K' \se G$ is compact and $U' \se G$ is a neighbourhood of $e\in
G$. Thus it suffices to show that for all
$K\se G$ compact and $U\se G$ identity neighbourhood, there is $K'$ and
$U'$ with
$$\mathfrak B'(K',U')\ \se\ \mathfrak B(K,U).$$
Since $G$ is totally disconnected, there is a compact open subgroup
$U'<G$ contained in $U$. Set $K'=K\cup U'$ and fix any
$g\in \mathfrak B'(K',U')$. We need to show that $[g\inv, K] \se U$.

First, notice that $[g\inv, K] = g\inv [g, K]\inv g$. Next, $[g, K]$ and
hence also $[g, K]\inv$ is in $U'$. Finally, $[g, U']\se U'$
means that $g U' g\inv\se U'$; by unimodularity, it follows that $g$
normalises $U'$. We conclude that $[g\inv, K] \se U'\se U$, as was to be
shown.
\end{proof}

A locally compact group $G$ for which the map $\Ad : G \to \Inn(G)$
is closed will be called \textbf{Ad-closed}\index{Ad-closed}. In that case, $\Inn(G)$ is
isomorphic to
$G/\centra(G)$ as a topological group and thus in particular it is
locally compact.

The group $G$ can fail to be \Adc even when it is a connected Lie group
(Example~\ref{ex:Lie:nonadc} below; see also
\emph{e.g.}~\cite{Lee-Wu,Zerling76}).
Perhaps more strikingly, $G$ can fail to be \Adc even when countable,
discrete and $\mathbf{Z}/2\mathbf{Z}$-by-abelian~\cite[4.5]{Wu71}.

\subsection*{Asymptotically central sequences}

Let $G$ be a locally compact group. A sequence $\{g_n\}$ of elements of
$G$ is called \textbf{asymptotically central}\index{asymptotically central sequence} if $\Ad(g_n)$ converges to
the identity in $\Ad(G)$. Obvious examples are central sequences or
sequences converging to~$e$; we shall investigate the existence of
non-obvious ones (for an admittedly limited analogy, compare the
\emph{property~$\Gamma$} introduced for $\mathrm{II}_1$-factors by
Murray and von Neumann, Definition~6.1.1 in~\cite{Murray-vonNeumannIV}).

\smallskip

The existence of suitably non-trivial asymptotically central sequences
is related to the question whether the Braconnier topology on $G$
(strictly speaking, on $G/\centra(G)$) coincides with the initial
topology, as follows.

\begin{prop}\label{prop:Adc}
Let $G$ be a second countable locally compact group. The following
conditions are equivalent.
\begin{enumerate}[(i)]
\item $\Inn(G)$ is locally compact.

\item The continuous homomorphism  $\Ad: G \to \Inn(G)$ is closed.

\item The map $G/\centra(G)\to \Ad(G)$ is a topological group isomorphism.

\item The image in $G/\centra(G)$ of every asymptotically central
sequence is relatively compact.
\end{enumerate}

\noindent
A sufficient condition for this is that $G$ admits some compact open
subgroup $U$ such that $\norma_G(U)$ is compact.
\end{prop}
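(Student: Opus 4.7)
The plan is to establish the cyclic chain (iii) $\Rightarrow$ (i) $\Rightarrow$ (ii) $\Rightarrow$ (iii), then the equivalence (iii) $\Leftrightarrow$ (iv), and finally the sufficient condition. The starting point is that the induced map $\overline{\Ad}\colon G/\centra(G) \to \Ad(G)$ is always a continuous bijective group homomorphism, since $\Ad\colon G \to \Aut(G)$ is continuous for the Braconnier topology with kernel $\centra(G)$. Condition (iii) thus amounts to $\overline{\Ad}$ being a topological isomorphism, and the task throughout will be to show that each of the other conditions forces this.

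For the first cycle, (iii) $\Rightarrow$ (i) is immediate since $G/\centra(G)$ is locally compact as the quotient of a locally compact group by a closed normal subgroup, and (i) $\Rightarrow$ (ii) follows from the standard fact that a locally compact subgroup of a Hausdorff topological group is automatically closed. The main obstacle is the step (ii) $\Rightarrow$ (iii): invoking that $\Aut(G)$ is a Polish group, as recalled earlier in this section from the second countability of $G$, a closed subgroup such as $\Inn(G)$ is itself Polish, and the continuous bijective homomorphism $\overline{\Ad}$ from the Polish group $G/\centra(G)$ to the Polish group $\Inn(G)$ is then a topological isomorphism by the open mapping theorem for Polish groups. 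This is the crucial use of second countability.

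For (iii) $\Rightarrow$ (iv), if $\overline{\Ad}$ is a homeomorphism then any sequence $\{g_n\}$ with $\Ad(g_n) \to e$ satisfies $g_n\centra(G) \to e$, which is in particular relatively compact. The converse (iv) $\Rightarrow$ (iii) I would handle by showing that $\overline{\Ad}^{-1}$ is sequentially continuous at the identity; since $G/\centra(G)$ is second countable and hence first countable, this yields continuity at $e$, which extends to all of $\Inn(G)$ by the group homomorphism property. Concretely, given $\Ad(g_n) \to e$, condition (iv) supplies relative compactness of $\{g_n\centra(G)\}$, and any subsequential limit $\gamma$ of this sequence satisfies $\overline{\Ad}(\gamma) = e$ by continuity of $\overline{\Ad}$; injectivity forces $\gamma = e$, so the whole sequence converges to~$e$.

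Finally, to establish the sufficient condition, suppose $U$ is a compact open subgroup with $\norma_G(U)$ compact, and let $\{g_n\}$ be an asymptotically central sequence. Applying the definition of the Braconnier topology with compact set $K = U$ and identity neighbourhood $V = U$, one has $[g_n, U] \subseteq U$ and $[g_n^{-1}, U] \subseteq U$ eventually. Since $U$ is a subgroup, these rearrange to $g_n U g_n^{-1} \subseteq U$ and $g_n^{-1} U g_n \subseteq U$, so $g_n \in \norma_G(U)$ for $n$ large. Compactness of $\norma_G(U)$ then yields the relative compactness of $\{g_n\}$ in $G$, hence of $\{g_n\centra(G)\}$ in $G/\centra(G)$, which is (iv).
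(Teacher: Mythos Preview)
Your argument is essentially correct and reaches the same conclusions, but the organisation differs from the paper's. The paper runs the cycle (i) $\Rightarrow$ (ii) $\Rightarrow$ (iii) $\Rightarrow$ (iv) $\Rightarrow$ (i), invoking the Baire-category open mapping theorem (Pontrjagin) for (i) $\Rightarrow$ (ii) and declaring (ii) $\Rightarrow$ (iii), (iii) $\Rightarrow$ (iv) immediate. The substantive step is (iv) $\Rightarrow$ (i): assuming $\Inn(G)$ is \emph{not} locally compact, one uses second countability of $G$ to pick an exhaustion $\{K_n\}$ by compacta and a neighbourhood basis $\{U_n\}$, and then chooses $g_n \in \mathfrak B(K_n,U_n) \setminus K_n\cdot\centra(G)$, manufacturing an asymptotically central sequence unbounded modulo the centre. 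Your (iv) $\Rightarrow$ (iii) is a repackaging of the same idea as a sequential-continuity argument for $\overline{\Ad}^{-1}$, which is perfectly valid and arguably tidier. The sufficient condition is handled the same way in both; the paper simply observes $\norma_G(U) = \mathfrak B(U,U)$ and concludes~(i) directly, while you conclude~(iv). Your reading of~(ii) as ``$\Inn(G)$ is closed in $\Aut(G)$'' (hence Polish) is not literally the paper's phrasing, but it makes the cycle go through cleanly via the Polish open mapping theorem, which is a reasonable alternative to the locally compact version the paper cites.

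One genuine slip: in (iv) $\Rightarrow$ (iii), sequential continuity of $\overline{\Ad}^{-1}$ at $e$ upgrades to continuity provided the \emph{domain} $\Inn(G)$ is first countable, not the codomain $G/\centra(G)$ as you write. This is still true---$\Inn(G)$ sits inside the Polish (hence metrisable) group $\Aut(G)$---so your conclusion stands, but the justification should point to the correct group.
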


\begin{proof}
(i) $\Rightarrow$ (ii) This is a well-known application of the Baire
category principle, going back at least
to~\cite[Theorem~XIII]{Pontrjagin39}.

\medskip
\noindent
(ii) $\Rightarrow$ (iii) and (iii) $\Rightarrow$ (iv) follow from the
definitions.

\medskip
\noindent
(iv) $\Rightarrow$ (i) Let $\{K_n\}$ be an increasing sequence of
compact subsets of $G$ whose union covers $G$ and let $\{U_n\}$ be a
decreasing family of sets providing a basis of neighbourhoods of $e\in
G$. Assuming for a contradiction that $\Inn(G)$ is not locally compact,
none of the sets $\mathfrak B(K_n, U_n)$ can have a relatively compact
image in $\Inn(G)$. Therefore, we can choose for each $n$ an element
$g_n$ in  $\mathfrak B(K_n, U_n)$ but not in $K_n.\centra(G)$. By
construction, the sequence $\{g_n\}$ is unbounded in $G/\centra(G)$ but
$\Ad(g_n)$ converges to the identity, a contradiction.

\medskip
Finally, notice that if $U$ is a compact open subgroup of $G$, then
$\norma_G(U) = \mathfrak B(U, U)$. This shows that if $\norma_G(U)$ is
compact, then $\Inn(G)$ admits $\Ad(\mathfrak B(U, U))$ as a compact
identity neighbourhood.
\end{proof}

We recall from~\cite{KakutaniKodaira} that a $\sigma$-compact locally
compact group $G$ always possesses a compact normal subgroup $Q$ such
that the quotient $G/Q$ is metrisable. In particular, any compactly
generated locally compact group without non-trivial compact normal
subgroup satisfies the hypotheses of Proposition~\ref{prop:Adc}.

The following construction provides examples of Lie groups  which
are not \Adc.

\begin{example}\label{ex:Lie:nonadc}
Let $L \cong \RR < \RR^2$ be a one-parameter subgroup with
irrational slope and denote by $Z$ the image of $L$ in the torus
$\mathbf{T}^2 = \RR^2/\ZZ^2$. Thus $Z$ is a connected dense
subgroup of $\mathbf{T}^2$. Let us now choose a continuous faithful
representation of $\mathbf{T}^2$ in $O(4)$ and consider the
corresponding semi-direct product $H = \mathbf{T}^2 \ltimes \RR^4$.
We define $G = Z \ltimes \RR^4$. Thus $G$ is a connected subgroup of
the Lie group $O(4) \ltimes \RR^4$.

We claim that $G$ is not \Adc. Indeed, let $(z_n)$ be an unbounded
sequence of elements of $Z$ which converge to $1$ in the torus
$\mathbf{T}^2$. One verifies  easily that $G$ is centrefree and
that the above sequence is asymptotically central in $G$. This
yields the desired claim in view of Proposition~\ref{prop:Adc}.
\end{example}

An illustration of the relevance of the notion of \Adc groups is
provided by the following.

\begin{lem}\label{lem:centraliser:Adc}
Let $G$ be a locally compact group and $H<G$ be a closed subgroup. If
$H$ is \Adc, then $H.\centra_G(H)$ is closed in $G$.
\end{lem}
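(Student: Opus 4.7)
The plan is to pass to the conjugation homomorphism $c \colon \norma_G(H) \to \Aut(H)$, with the Braconnier topology on the target, and to invoke the hypothesis on $H$ in the form ``$\Inn(H)$ is a closed subgroup of $\Aut(H)$''. The latter follows from Proposition~\ref{prop:Adc}: Ad-closedness forces $\Inn(H)$ to be locally compact, and a locally compact subgroup of the Hausdorff group $\Aut(H)$ is automatically closed.

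I would first take a net $g_\alpha = h_\alpha z_\alpha$ converging to some $g \in G$, with $h_\alpha \in H$ and $z_\alpha \in \centra_G(H)$, and show that $g$ normalises $H$. For any fixed $x \in H$, the fact that $z_\alpha$ centralises $x$ gives $g_\alpha x g_\alpha^{-1} = h_\alpha x h_\alpha^{-1} \in H$; passing to the limit in $G$ yields $g x g^{-1} \in \overline{H} = H$, and symmetrically $g^{-1} x g \in H$. Hence $g \in \norma_G(H)$, and of course each $g_\alpha$ already lies in $\norma_G(H)$ since $H \cdot \centra_G(H) \se \norma_G(H)$.

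A routine verification based on continuity of the commutator map, combined with the tube lemma applied to each compact $K \se H$, shows that $c$ is continuous for the Braconnier topology on $\Aut(H)$. Since $c(z_\alpha) = \id_H$, we have $c(g_\alpha) = \Ad(h_\alpha) \in \Inn(H)$, and the continuity of $c$ yields $\Ad(h_\alpha) \to c(g)$ in $\Aut(H)$. Closedness of $\Inn(H)$ then forces $c(g) = \Ad(h)$ for some $h \in H$, so that $h^{-1} g \in \centra_G(H)$ and $g = h \cdot (h^{-1} g) \in H \cdot \centra_G(H)$, as desired.

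The main conceptual step is the upgrade from ``$\Ad \colon H \to \Inn(H)$ is a closed map'' to ``$\Inn(H)$ is a closed subset of $\Aut(H)$''; once this is in hand, the rest is a formal continuity argument in the Braconnier topology, which is precisely engineered to make conjugation a continuous map into the automorphism group.
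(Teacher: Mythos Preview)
Your proof is correct and follows essentially the same strategy as the paper: both hinge on the observation that Ad-closedness makes $\Inn(H)$ locally compact and hence closed in $\Aut(H)$, after which the conjugation map $\norma_G(H)\to\Aut(H)$ identifies $H\cdot\centra_G(H)$ as the preimage of $\Inn(H)$. The only cosmetic difference is that the paper first replaces $G$ by $\overline{H\cdot\centra_G(H)}$ so that $H$ becomes normal in $G$, thereby bypassing your net argument for $g\in\norma_G(H)$; also note that the implication ``Ad-closed $\Rightarrow$ $\Inn(H)$ locally compact'' is stated in the paper right after the definition and does not actually require the second-countability hypothesis of Proposition~\ref{prop:Adc}.
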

\begin{proof}
Without loss of generality, we may assume that $H.\centra_G(H)$ is
dense in $G$. Then $H$ is normal and there is a continuous
conjugation action $\alpha: G \to \Aut(H)$. Since $H.\centra_G(H)$
is dense, it follows that $\Inn(H)$ is dense in $\alpha(G)$. Now
$\Inn(H)$ being closed in $\Aut(H)$ by hypothesis, we infer that
$\alpha(G) = \Inn(H)$. The result follows, since the pre-image of
$\Inn(H)$ in $G$ is nothing but $H.\centra_G(H)$.
\end{proof}

\subsection*{The adjoint closure}

The closure of $\Inn(G)$ in $\Aut(G)$ will be called the \textbf{adjoint
closure}\index{adjoint closure} of $G$ and will be denoted by $\overline{\Inn(G)}$.
We think of an automorphism in $\overline{\Inn(G)}$ as ``approximately inner''.
We point out that $\Inn(G)$ is normal in $\Aut(G)$ and hence in
particular in $\overline{\Inn(G)}$.

\smallskip
Basic properties of the adjoint closure are summarised in the following.
Notice that $\overline{\Inn(G)}$ is not assumed locally compact except
in the last item.

\begin{lem}\label{lem:AdjointClosure}
Let $G$ be a locally compact group.
\begin{enumerate}[(i)]
\item If $G$ is centrefree, then so is $\overline{\Inn(G)}$.

\item If $G$ is topologically simple, then so is $\overline{\Inn(G)}$.

\item If $G$ is totally disconnected, then so is $\overline{\Inn(G)}$.

\item Suppose that $\overline{\Inn(G)}$ is locally compact. If $G$ is
compactly generated, then so is $\overline{\Inn(G)}$.
\end{enumerate}
\end{lem}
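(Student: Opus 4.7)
The plan is to prove (i)--(iv) in sequence, with (ii) invoking (i), and (iii), (iv) handled independently.

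For (i), I would start with any $\alpha$ centralising $\overline{\Inn(G)}$: such an $\alpha$ commutes with every $\Ad(g)$, and the identity $\alpha \circ \Ad(g) \circ \alpha\inv = \Ad(\alpha(g))$, valid in $\Aut(G)$, then yields $\Ad(\alpha(g) g\inv) = \id$, i.e.\ $\alpha(g) g\inv \in \centra(G)$ for every $g \in G$. If $\centra(G) = 1$ this forces $\alpha = \id$.

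For (ii), the abelian case is trivial since then $\Inn(G) = 1$; otherwise $\centra(G)$ is a proper closed normal subgroup of the topologically simple $G$, hence trivial, and (i) applies. Given a closed normal subgroup $N$ of $\overline{\Inn(G)}$, continuity of $\Ad : G \to \Aut(G)$ shows that $\Ad\inv(N)$ is a closed normal subgroup of $G$, hence equal to $1$ or $G$ by simplicity. The case $\Ad\inv(N) = G$ gives $\Inn(G) \se N$; together with the density of $\Inn(G)$ and closedness of $N$ this forces $N = \overline{\Inn(G)}$. In the case $\Ad\inv(N) = 1$ we obtain $N \cap \Inn(G) = 1$, and the normality of both subgroups in $\overline{\Inn(G)}$ yields $[N, \Inn(G)] \se N \cap \Inn(G) = 1$, so $N$ centralises the dense subgroup $\Inn(G)$ and, by continuity of the commutator, the whole of $\overline{\Inn(G)}$; by (i) we conclude $N = 1$.

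For (iii), I would prove the stronger statement that $\Aut(G)^\circ = \{\id\}$ when $G$ is totally disconnected. The key observation is that for every $g \in G$ and every compact open subgroup $U \le G$, the basic Braconnier neighbourhood
$$A_{g,U} := \big\{\alpha \in \Aut(G) : \alpha(g) \in Ug \text{ and } \alpha\inv(g) \in Ug\big\}$$
is not merely open but in fact clopen: the evaluation maps $\alpha \mapsto \alpha(g)$ and $\alpha \mapsto \alpha\inv(g)$ are continuous on $\Aut(G)$ equipped with the Braconnier topology, and $Ug$ is clopen in $G$. Hence any connected subset of $\Aut(G)$ containing $\id$ is contained in $A_{g,U}$; intersecting over all compact open subgroups $U \le G$ (a basis of identity neighbourhoods of $G$ by van~Dantzig, so $\bigcap_U Ug = \{g\}$), we conclude that every $\alpha \in \Aut(G)^\circ$ fixes $g$, and letting $g$ vary yields $\Aut(G)^\circ = \{\id\}$; a fortiori $\overline{\Inn(G)}^\circ = \{\id\}$.

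For (iv), assume $\overline{\Inn(G)}$ is locally compact and pick a compact identity neighbourhood $K$ in it together with a compact generating set $S$ of $G$. The subgroup $H$ of $\overline{\Inn(G)}$ generated by $K \cup \Ad(S)$ is open, as it contains the neighbourhood $K$, hence closed; since $\Ad(S)$ generates $\Inn(G)$ as an abstract group and $\Inn(G)$ is dense in $\overline{\Inn(G)}$, the closed subgroup $H$ must equal $\overline{\Inn(G)}$, which is therefore compactly generated by $K \cup \Ad(S)$. I expect the main obstacle to be in (iii): the clopen-ness of the basic Braconnier neighbourhoods is not formal but crucially uses the totally disconnected hypothesis through the clopen cosets $Ug$; the remaining items follow routinely from the continuity of $\Ad$ and the density of $\Inn(G)$.
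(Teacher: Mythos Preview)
Your argument is correct and matches the paper's approach for (i), (ii) and (iv) essentially verbatim: the paper proves (i) by the same computation $\alpha(g)\inv g\in\centra(G)$, proves (ii) by pulling back a closed normal $N\lhd\overline{\Inn(G)}$ along $\Ad$ and using $[N,\Inn(G)]\se N\cap\Inn(G)$ in the trivial-preimage case, and proves (iv) by observing that a compact identity neighbourhood together with $\Ad$ of a compact generating set generates $\overline{\Inn(G)}$. Your extra sentence isolating the abelian case in (ii) is a small clarification the paper leaves implicit.

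The one genuine difference is (iii). The paper simply cites Braconnier~[IV~\S2] and Hewitt--Ross~[Theorem~26.8] for the fact that $\Aut(G)$ is totally disconnected when $G$ is. Your direct argument --- that the sets $A_{g,U}$ are clopen in the Braconnier topology because the evaluation maps $\alpha\mapsto\alpha(g)$ and $\alpha\mapsto\alpha\inv(g)$ are continuous and the cosets $Ug$ are clopen, whence $\Aut(G)^\circ$ fixes every $g$ --- is correct and self-contained. It has the advantage of not relying on external references and of making transparent exactly where total disconnectedness enters (through the clopen cosets $Ug$), at the cost of a few extra lines.
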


\begin{proof}
(i) Given $\alpha \in \centra_{\Aut(G)}(\Inn(G))$, we have $\alpha(g)x
\alpha(g)\inv = g x g\inv$ for all $g, x \in G$. Thus $\alpha(g)\inv g $
belongs to $\centra(G)$ and the result follows.

\medskip
\noindent
(ii) Let $H < \overline{\Inn(G)}$ be a closed normal subgroup and
let $H_0 = \Ad\inv(H)$ be the pre-image of $H$ in $G$. Then $H_0$ is
a closed normal subgroup of $G$ and is thus trivial of the whole group. If $H_0 = G$, then
$H$ contains $\Inn(G)$ which is dense, thus $H=G$ as well.
If $H_0 = 1$, then $H \cap \Inn(G) = 1$. This implies that
$[H, \Inn(G)] \se H \cap \Inn(G) = 1$. Thus $H $ commutes
with the dense subgroup $\Inn(G)$ and is thus contained in the
centre of $\overline{\Inn(G)}$, which is trivial by the assertion~(i).

\medskip
\noindent
(iii) See~\cite[IV \S\,2]{Braconnier} or~\cite[Theorem~26.8]{HewittRoss}.

\medskip
\noindent
(iv) Let $U$ be a compact neighbourhood of the identity in
$\overline{\Inn(G)}$ and $C\se G$ a compact
generating set. Then $U.\Ad(C)$ generates $\overline{\Inn(G)}$.
\end{proof}

\subsection*{Locally finitely generated groups}

We shall say that a totally disconnected locally compact group $G$ is
\textbf{locally finitely generated}\index{locally finitely generated} if $G$ admits some
compact open subgroup that is topologically finitely generated,
\emph{i.e.} possesses a finitely generated dense subgroup.
Since any two compact open subgroups of $G$ are commensurable, it follows
that $G$ is locally finitely generated if and only if \emph{any}
compact open subgroup is topologically finitely generated.
Examples of such include $p$-adic analytic groups (see
\emph{e.g.}~\cite[Theorem~8.36]{AnalyticPro-p}), many complete
Kac--Moody groups over finite fields~\cite[Theorem~6.4]{CER} as well as
several (but not all) locally compact groups acting properly on locally
finite trees~\cite{Mozes98}.

An important property of finitely generated profinite groups is that
they admit a (countable) basis of identity neighbourhoods consisting of
characteristic subgroups, because they have only finitely many closed
subgroups of any given index. Locally finitely generated groups are thus
covered by the following result.

\begin{thm}\label{thm:AutG:lc}
Let $G$ be a totally disconnected compactly generated locally compact group.
Suppose that $G$ admits an open subgroup $U$ that has a basis of identity
neighbourhoods consisting of characteristic subgroups of $U$ (\emph{e.g.} $G$ is
locally finitely generated).

Then $\Aut(G)$ is locally compact.
\end{thm}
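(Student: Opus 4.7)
The plan is to exhibit a compact identity neighbourhood for the Braconnier topology on $\Aut(G)$. I begin by selecting, thanks to the hypothesis and the total disconnectedness of $G$, a compact open subgroup $V$ of $G$ contained in $U$ and characteristic in $U$. Since every open subgroup of the compact group $V$ has finite index in $V$, the members of the given basis of $U$-characteristic subgroups that lie in $V$ form a cofinal nested family $V = V_0 \supseteq V_1 \supseteq V_2 \supseteq \cdots$ of compact open subgroups normal in $U$, with $\bigcap_k V_k = \{e\}$. Using the compact generation of $G$ together with the openness of $V$, I pick a finite set $T = \{t_1,\dots,t_n\}$ such that $G = \langle V \cup T\rangle$.

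The candidate identity neighbourhood is $\Omega := \mathfrak{A}(V \cup T,\, V)$. By definition this is an open identity neighbourhood in the Braconnier topology, and every $\alpha \in \Omega$ satisfies $\alpha(V) = V$ together with $\alpha(t_i) \in V t_i$. Since $G$ is generated by $V \cup T$, such an $\alpha$ is entirely determined by its restriction $\alpha|_V \in \Aut(V)$ and the elements $v_i := \alpha(t_i) t_i^{-1} \in V$, yielding a continuous injection
\[
\Phi : \Omega \lra \Aut(V) \times V^n.
\]
As $V^n$ is compact, it suffices to show that $\alpha|_V$ ranges over a compact subset of $\Aut(V)$ and that $\Phi(\Omega)$ is closed. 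The closedness is routine once the first point is settled: it amounts to translating the defining relations of $G$ as generated by $V \cup T$ into closed conditions on $\Aut(V) \times V^n$.

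The crucial step is to show that each $\alpha \in \Omega$ preserves every $V_k$ setwise. Granted this, $\alpha|_V$ induces a compatible system of automorphisms of the \emph{finite} groups $V/V_k$, so the image of the restriction map $\alpha \mapsto \alpha|_V$ lies in the inverse limit $\varprojlim_k \Aut(V/V_k)$, which is a profinite, hence compact, subgroup of $\Aut(V)$. Combined with compactness of $V^n$, this proves that $\Phi(\Omega)$ is contained in a compact set, and hence $\Omega$ is itself compact.

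The main obstacle is therefore the preservation of the $V_k$ under elements of $\Omega$. Preservation is immediate if $\alpha(U) = U$, since the $V_k$ are by definition characteristic in $U$; but $\alpha(V) = V$ alone does not force $\alpha(U) = U$. To handle this, I would replace $U$ at the outset by the normaliser $N_G(V)$, observing that any $\alpha \in \Aut(G)$ preserving $V$ automatically satisfies $\alpha(N_G(V)) = N_G(\alpha(V)) = N_G(V)$, so that $\alpha$ restricts to an automorphism of $N_G(V)$. One then needs to verify that the $V_k$ remain characteristic in $N_G(V)$, either by choosing the initial $V$ deep enough in the basis so that the property transfers, or by augmenting the test set defining $\Omega$ with a finite subset of $U$ selected to force $\alpha(U) \subseteq U$ directly. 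This transfer is the only non-routine point in the argument.
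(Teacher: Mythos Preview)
Your proposal isolates the correct crux --- showing that every $\alpha$ in the candidate neighbourhood preserves each $V_k$ --- but it leaves this point genuinely unresolved. Neither of your two suggested fixes works as stated. Replacing $U$ by $N_G(V)$ gives you $\alpha(N_G(V))=N_G(V)$ from $\alpha(V)=V$, but there is no reason whatsoever for the $V_k$ (which were only assumed characteristic in $U$) to be characteristic in $N_G(V)$; you have traded one unproved statement for another. Augmenting the test set by a \emph{finite} subset of $U$ cannot force $\alpha(U)\subseteq U$ either, since $U$ need not be generated by $V$ together with finitely many elements.

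The paper's resolution is the simple observation you are circling around: one first arranges $U$ to be compact and then puts \emph{all of $U$} into the compact test set. Concretely, take a symmetric compact generating set $C\supseteq U$ and set $\Omega=\mathfrak A(C,U)$. Since $U\subseteq C$, one has $\Omega\subseteq\mathfrak A(U,U)=\norma_{\Aut(G)}(U)$, so every $\alpha\in\Omega$ satisfies $\alpha(U)=U$ and hence $\alpha(V_k)=V_k$ because the $V_k$ are characteristic in $U$. This is the entire content of your ``non-routine point''.

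Once this is in hand, the paper does not go through your map $\Phi:\Omega\to\Aut(V)\times V^n$ at all; it invokes an Arzel\`a--Ascoli criterion (Proposition~\ref{prop:AA}) directly: symmetry of $\Omega$ is by definition, the $V_k$ supply arbitrarily small $\Omega$-invariant identity neighbourhoods, and writing any $x\in G$ as a word of length $d$ in $C$ gives $\Omega(x)\subseteq (U\cdot C)^d$, which is compact. Your $\Phi$-route can be made to work, but note that ``$\Phi(\Omega)$ relatively compact $\Rightarrow$ $\Omega$ compact'' is not automatic: you must also check that $\Phi^{-1}$ is continuous for the Braconnier topology (not just the compact-open one), and that $\Phi(\Omega)$ is closed. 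These are doable but not the one-liners you suggest, whereas Arzel\`a--Ascoli bypasses them entirely.
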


The proof will use the following version of the Arzel\`a--Ascoli Theorem.

\begin{prop}\label{prop:AA}
Let $G$ be a locally compact group and $V \se \Aut(G)$ a subset such that
\begin{enumerate}[(i)]
\item $V=V\inv$;

\item $G$ has arbitrarily small $V$-invariant identity neighbourhoods;

\item $V(x)$ is relatively compact in $G$ for each $x\in G$.
\end{enumerate}
\noindent
Then $V$ is relatively compact in $\Aut(G)$.
\end{prop}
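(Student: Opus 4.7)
My plan is to deduce Braconnier relative compactness of $V$ by invoking the Arzel\`a--Ascoli theorem for the compact-open topology, applied simultaneously to $V$ and to $V^{-1}=V$, and then to promote compact-open convergence to Braconnier convergence using the symmetry hypothesis. First I would check that $V$, viewed as a family of continuous maps $G\to G$, is (uniformly) equicontinuous. This is immediate from hypothesis (ii): given an identity neighbourhood $U$, choose a $V$-invariant identity neighbourhood $W\subseteq U$, so that $\alpha(W)\subseteq W$ for every $\alpha\in V$; since every $\alpha\in V$ is a homomorphism, $x^{-1}y\in W$ forces $\alpha(x)^{-1}\alpha(y)\in W\subseteq U$ uniformly in $\alpha$. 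Combined with pointwise relative compactness (hypothesis (iii)), the classical Arzel\`a--Ascoli theorem for continuous maps between locally compact Hausdorff spaces yields that $V$ is relatively compact in $C(G,G)$ for the compact-open topology; the same conclusion applies to $V^{-1}=V$.

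Next I would fix a net $(\alpha_i)$ in $V$ and, passing to a subnet twice, arrange that $\alpha_i\to\alpha$ and $\alpha_i^{-1}\to\beta$ uniformly on compacta for some $\alpha,\beta\in C(G,G)$. Both $\alpha$ and $\beta$ are homomorphisms, since pointwise limits of group homomorphisms are homomorphisms (multiplication in $G$ being jointly continuous). To identify $\beta$ as the inverse of $\alpha$, I would fix $x\in G$ and exploit the identity $\alpha_i(\alpha_i^{-1}(x))=x$: the equicontinuity of $V$ applied with $\alpha_i^{-1}(x)\to\beta(x)$ shows that $\alpha_i(\alpha_i^{-1}(x))$ and $\alpha_i(\beta(x))$ have the same limit, while pointwise convergence gives $\alpha_i(\beta(x))\to\alpha(\beta(x))$. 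Hence $\alpha(\beta(x))=x$, and symmetrically $\beta(\alpha(x))=x$, so $\alpha\in\Aut(G)$ with $\alpha^{-1}=\beta$.

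At this point $\alpha_i\to\alpha$ and $\alpha_i^{-1}\to\alpha^{-1}$ in the compact-open topology, which by the very definition of the Braconnier topology is exactly convergence $\alpha_i\to\alpha$ in $\Aut(G)$. Since every net in $V$ thus has a Braconnier-convergent subnet, $V$ is relatively compact in $\Aut(G)$.

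The main obstacle is precisely the step where compact-open convergence of $\alpha_i$ must be parlayed into compact-open convergence of $\alpha_i^{-1}$: this can fail for arbitrary sequences of automorphisms, as Braconnier's own example mentioned earlier shows, and it is exactly the symmetry hypothesis $V=V^{-1}$ that rescues the argument by forcing $V^{-1}$ to enjoy the same Arzel\`a--Ascoli hypotheses as $V$. Equicontinuity then glues the two compact-open limits into a genuine topological automorphism.
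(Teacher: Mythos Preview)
Your proof is correct and follows essentially the same route as the paper: equicontinuity from~(ii), Arzel\`a--Ascoli for relative compactness in $C(G,G)$ with the compact-open topology, endomorphisms being pointwise-closed, and then the symmetry $V=V^{-1}$ to obtain a compact-open limit for the inverses as well. The only cosmetic difference is that where you verify $\alpha\circ\beta=\mathrm{id}=\beta\circ\alpha$ by hand via equicontinuity, the paper simply invokes the joint continuity of composition $C(G,G)\times C(G,G)\to C(G,G)$ for the compact-open topology (valid because $G$ is locally compact), which immediately gives $\alpha_i\circ\alpha_i^{-1}\to\alpha\circ\beta$ and hence $\alpha\circ\beta=\mathrm{id}$.
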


In the case where $V$ is a compact subgroup of $\Aut(G)$, this is
Theorem~4.1 in~\cite{Grosser-Moskowitz67}.

\begin{proof}[Proof of Proposition~\ref{prop:AA}]
Point~(ii) implies that $V$ is equicontinuous (in fact, uniformly
equicontinuous). Therefore, we can apply Arzel\`a--Ascoli
(in the generality of~\cite{BourbakiTGII}, X~\S\,2 No.~5) and deduce
that $V$ has compact closure in the space of continuous maps $G\to G$
endowed with the compact-open topology (which, as mentioned, coincides
with the topology of compact convergence).
The closure of $V$ remains in the space of continuous endomorphisms
since the latter is closed even pointwise.
In view of the symmetry of the assumptions and of the fact that
composition is continuous in the compact-open
topology~\cite[XII.2.2]{Dugundji66}, the closure of $V$ remains in
$\Aut(G)$ and is compact for the Braconnier topology.
\end{proof}

\begin{proof}[Proof of Theorem~\ref{thm:AutG:lc}]
Let $U<G$ be an open subgroup admitting a basis of identity
neighbourhoods $\{U_\alpha\}_{\alpha}$ consisting of characteristic
subgroups of $U$. We can assume $U$ compact upon intersecting with a compact open subgroup.
Let $C\se G$ be a symmetric compact set generating $G$ and containing $U$.
We shall prove that $V :=\mathfrak A(C,U)\se \Aut(G)$ satisfies the assumptions of
Proposition~\ref{prop:AA}; this then establishes the theorem.

The first assumption holds by definition. For the second, notice first
that $V$ normalises $U$ since $U\se C$ implies
$$V\ \se\ \mathfrak A(U,U)\ =\ \norma_{\Aut(G)}(U).$$
Assumption~(ii) holds since the identity neighbourhoods $U_\alpha$ are
characteristic, hence normalised by $ \norma_{\Aut(G)}(U)$.

In order to establish the last assumption, choose $x\in G$. Since $C$ is
generating and symmetric, there is an integer $d$ such that
$x\in C^d$. The definition of $V$ shows that for any automorphism $n\in
V$, we have $n(x)\in (U.C)^d$; this implies~(iii).
\end{proof}

\subsection*{The adjoint closure of discrete groups}
A particularly simple illustration of the concepts introduced above is provided by discrete groups.
The Braconnier topology on $\Aut(G)$ is then the topology of pointwise convergence and coincides with
pointwise convergence of the inverse. The adjoint closure $\overline{\Inn(G)}$ coincides therefore with
the group $\Linn(G)$ of \textbf{locally inner}\index{locally inner automorphism} automorphisms,
\emph{i.e.} automorphisms that coincide
on every finite set with some inner automorphism. This concept was apparently first introduced
({\cyrrm lokal\cprime no vnutrennim}) by Gol'berg~\cite[\S\,3 {\cyrrm Opredelenie~5}]{Golberg46}.

\smallskip

Here are a few elementary properties of the resulting correspondance $G\mapsto\overline{\Inn(G)}$ from
abstract (resp. countable) groups to topological (resp. Polish) groups.

\begin{prop}\label{prop:Discrete:1}
Let $G$ be a discrete group and $A=\overline{\Inn(G)}=\Linn(G)$ its adjoint closure.

\begin{enumerate}
\item $\Ad(G) \leq \QZ(A)$. In particuylar $A$ is quasi-discrete; it is discrete if and only if there is a finite set $F\se G$
with $\centra_G(F)=\centra(G)$.

\item $A$ is compact if and only if $G$ is
FC\footnote{Recall that $G$ is an \textbf{FC-group}\index{FC-group} if all its conjugacy classes are finite.}.

\item $A$ is locally compact if and only if  there is a finite set $F\se G$ such that
the index $[\centra_G(F):\centra_G(F')]$ is finite for every finite $F'\supseteq F$ in $G$.

\item $A$ is locally compact and compactly generated if and only if there is $F$ as in~(iii) and
$F_0\se G$ finite such that $F_0\cup \centra_G(F)$ generates $G$.
\end{enumerate}
\end{prop}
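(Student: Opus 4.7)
The plan is to treat each item in turn, using that on a discrete $G$ the Braconnier topology reduces to pointwise convergence and that $A = \Linn(G)$ consists of automorphisms that agree with some inner automorphism on every finite set.

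For (1), the pointwise stabiliser $\mathfrak{A}(\{g\},\{e\})\cap A$ is an open subgroup of $A$ contained in $\centra_A(\Ad(g))$, since any $\alpha$ fixing $g$ satisfies $\alpha\Ad(g)\alpha^{-1} = \Ad(\alpha(g)) = \Ad(g)$. Hence $\Ad(G)\le\QZ(A)$, and quasi-discreteness follows from density of $\Ad(G)$. The discreteness criterion reduces to showing that $\mathfrak{A}(F,\{e\})\cap A = \{\mathrm{id}\}$ iff $\centra_G(F)=\centra(G)$: any $\alpha \in A$ fixing $F$ is, by local inner-ness applied to $F\cup\{x\}$, of the form $\alpha(x) = hxh^{-1}$ with $h\in\centra_G(F)$, so the hypothesis forces $\alpha(x)=x$; conversely $\Ad(h)$ for $h\in\centra_G(F)\setminus\centra(G)$ yields a non-trivial element of this stabiliser.

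For (2) and (3) I would apply the Arzel\`a--Ascoli Proposition~\ref{prop:AA}; the invariance hypothesis on identity neighbourhoods holds trivially in the discrete case. For (2), the orbit $A\cdot x$ coincides with the $G$-conjugacy class of $x$ because discreteness forces limits of conjugates to be eventually constant; relative compactness of all such orbits is then the FC condition, and Proposition~\ref{prop:AA} combined with closedness of $A$ in $\Aut(G)$ gives compactness of $A$. For (3), the natural candidate for a compact open subgroup is $V_F := \mathfrak{A}(F,\{e\})\cap A$, which is open in $A$ and a subgroup (being the pointwise stabiliser of $F$). Local inner-ness applied to $F\cup\{y\}$ shows $V_F\cdot y \subseteq \{hyh^{-1}: h\in\centra_G(F)\}$, a set of cardinality $[\centra_G(F):\centra_G(F\cup\{y\})]$, so Proposition~\ref{prop:AA} converts uniform finiteness of these orbits into relative compactness of $V_F$; conversely a compact identity neighbourhood forces these indices to be finite on every finite enlargement of $F$.

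For (4), the direction $\Leftarrow$ uses that $V_F$ is a compact open subgroup by (iii) and that $\Ad(\centra_G(F))\subseteq V_F$: then $\Ad(G) = \Ad(\langle F_0, \centra_G(F)\rangle)\subseteq \langle \Ad(F_0), V_F\rangle$, an open subgroup which is also dense in $A$ (since $\Ad(G)$ is), hence equals $A$; so $\Ad(F_0)\cup V_F$ is a compact generating set. For $\Rightarrow$, take $F$ from (iii), and use density of $\Ad(G)$ together with compact generation and openness of $V_F$ to cover a compact generating set of $A$ by finitely many translates $\Ad(g_i)V_F$, yielding $A = \langle V_F, \Ad(g_1),\dots,\Ad(g_m)\rangle$ for some $g_i\in G$. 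Setting $F_0 = \{g_1,\dots,g_m\}$ and $H = \langle F_0\cup\centra_G(F)\rangle\le G$, the goal is to show $H = G$. Given $x \in G$, write $\Ad(x) = v_0\Ad(g_{i_1})^{\vareps_1}v_1\cdots\Ad(g_{i_k})^{\vareps_k}v_k$ with $v_j\in V_F$, and prove by induction on $k$ that there exists $c\in H$ with $\Ad(x)|_F = \Ad(c)|_F$. The main obstacle is that the factors $v_j$ are only \emph{locally} inner: the inner approximation they supply depends on the finite set of test points. This is overcome by applying local inner-ness of each $v_j$ to $F$ together with the already-accumulated image of $F$; since $v_j$ fixes $F$ pointwise, the resulting conjugating element lies in $\centra_G(F)\subseteq H$. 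Multiplying these with the $g_i^{\pm 1}$ contributions gives the required $c\in H$, and from $\Ad(x)|_F = \Ad(c)|_F$ we deduce $c^{-1}x\in\centra_G(F)\subseteq H$, hence $x\in H$.
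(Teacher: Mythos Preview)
Your proof is correct and follows the same approach as the paper, which simply states that ``all verifications are straightforward'' using the elementary Arzel\`a--Ascoli fact that for $G$ discrete a subset $V\se\Aut(G)$ has compact closure if and only if every orbit $V(x)$ is finite. You have supplied the details the paper omits; in particular your treatment of~(iv), reducing the word in $V_F$ and $\Ad(g_i)^{\pm1}$ to an inner automorphism $\Ad(c)$ with $c\in H$ by successively replacing each $v_j$ by a conjugation from $\centra_G(F)$ on the relevant finite set, is exactly the kind of argument the paper leaves implicit.
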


\begin{proof}
All verifications are straightforward. One uses notably an elementary version of Proposition~\ref{prop:AA}
stating that, for $G$ discrete, a subset $V\se \Aut{G}$ has compact closure
if and only if $V(x)$ is finite for all $x\in G$.
\end{proof}

Discrete groups are a safe playground to experiment with intermediate topologies inbetween the original
topology and the Braconnier topology induced \emph{via} the adjoint representation. The following construction
will lead to interesting examples, see Appendix~\ref{app:B} and especially Example~\ref{ex:1bis}.

\smallskip

Let $N$ be a discrete group and $U<N$ a subgroup such that
\begin{enumerate}
\item $\centra_U(g)$ has finite index in $U$ for all $g\in N$;

\item the intersection of all $N$-conjugates of $U$ is trivial.
\end{enumerate}

In particular, the $N$-conjugates of $U$ generate a completable group
topology~\cite[TG\,III, \S\,3, No~4]{BourbakiTGIII} and $N$ injects
into the resulting complete totally disconnected topological group $M$.

\begin{prop}\label{prop:Discrete:2}
\ 
\begin{enumerate}
\item The group $M$ is locally compact; in fact $U$ has compact-open closure in $M$.

\item There is a (necessarily unique) continuous injective homomorphism $M/\centra(M)\to \overline{\Inn(N)}$
compatible with the maps $N\to M$ and $N\to \overline{\Inn(N)}$.
In particular, the dense image of $N$ in $M$ is normal and quasi-central (thus $M$ is quasi-discrete).

\item If $N$ is centrefree (resp. simple), then $M/\centra(M)$ is centrefree (resp. topologically simple).

\item $M/\centra(M)=\overline{\Inn(N)}$ if and only if $\centra_N(F)\se U$ for some finite $F\se N$.
\end{enumerate}
\end{prop}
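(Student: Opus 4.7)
The plan is to handle the four assertions in sequence, repeatedly exploiting condition~(1) in its equivalent form that the $U$-conjugacy class of every $n\in N$ is finite. For~(1), I note that $U\cap gUg^{-1}\supseteq\centra_U(g)$ has finite index in $U$ by condition~(1), so the neighborhood filter of $e$ within $U$ consists of finite-index subgroups; the closure $\overline U\se M$ is therefore a profinite group, and being open (as $U$ is), a compact open subgroup. Hence $M$ is locally compact.

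For~(2), I first establish $N\lhd M$: given $u\in\overline U$ approximated by a net $(u_\alpha)$ in $U$ and $n\in N$, the conjugates $u_\alpha n u_\alpha^{-1}$ lie in the finite $U$-conjugacy class of $n$ and converge in $M$ to $unu^{-1}$ by continuity of conjugation; Hausdorffness then forces $unu^{-1}\in N$, so $\overline U$ normalizes $N$. Since $M=N\cdot\overline U$ by density of $N$ and openness of $\overline U$, $N$ is normal in $M$. Conjugation yields $\Phi\colon M\to\Aut(N)$, with kernel $\centra_M(N)=\centra(M)$ (by density), continuous in the Braconnier (pointwise) topology because each $\centra_M(n)\supseteq\centra_{\overline U}(n)$ is open (a finite-index subgroup of $\overline U$). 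Since $\Phi(N)=\Inn(N)$, the image of $\Phi$ lies in $\overline{\Inn(N)}$. This yields the injective continuous map $M/\centra(M)\hookrightarrow\overline{\Inn(N)}$; uniqueness follows from density of $N$ and Hausdorffness of $\overline{\Inn(N)}$. Openness of each $\centra_M(n)$ also shows $N\se\QZ(M)$, so $M$ is quasi-discrete.

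Assertion~(3) follows swiftly: if $N$ is centrefree, then Lemma~\ref{lem:AdjointClosure}(i) gives $\overline{\Inn(N)}$ centrefree, and a central element of $M/\centra(M)$ would commute with the dense image of $N$, hence be central in $\overline{\Inn(N)}$ and therefore trivial. If $N$ is simple (and non-abelian, else $M$ is trivial), then any closed normal subgroup $H\lhd M/\centra(M)$ meets the dense copy of $N$ in a normal subgroup of $N$, hence either trivial (forcing $H$ central, so trivial) or all of $N$ (forcing $H=M/\centra(M)$).

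For~(4) I compare the topology $\tau_M$ on $N$ (subbasis: $N$-conjugates of $U$) with the pullback $\tau_{\Ad}$ of Braconnier via $\Ad$ (subbasis: $\centra_N(F)$ for finite $F\se N$); continuity of $\Phi$ gives $\tau_{\Ad}\se\tau_M$ automatically. For the implication~$\Leftarrow$, $\centra_N(F)\se U$ implies $\centra_N(gFg^{-1})\se gUg^{-1}$ for every $g\in N$, so $\tau_M\se\tau_{\Ad}$ and the two topologies coincide; moreover $\centra(N)\se\centra_N(F)\se U$, and since $\centra(N)$ is $N$-stable, $\centra(N)\se\bigcap_g gUg^{-1}=1$ by condition~(2), forcing $\centra(N)=1$, so both $M$ and $\overline{\Inn(N)}$ are the Hausdorff completion of $N$ for the common topology and hence coincide. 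For~$\Rightarrow$, the topological isomorphism sends the compact open image of $\overline U$ in $M/\centra(M)$ to a compact open subgroup of $\overline{\Inn(N)}$, which must contain $\{\alpha\in\overline{\Inn(N)}:\alpha|_F=\id\}$ for some finite $F\se N$; for $g\in\centra_N(F)$, $\Ad(g)\in\Phi(\overline U)$ forces $g\in\overline U\cdot\centra(M)$, and intersecting with $N$ (using $N\cap\overline U=U$ together with the centrefreeness of $N$, itself forced by the isomorphism) gives $g\in U$. The main obstacle is this forward direction of~(4): one must combine the open-mapping step with the identification $\centra(M)\cap N=\centra(N)$ to descend from $\overline U\cdot\centra(M)$ to $U$, a subtlety made possible precisely because the isomorphism rules out a non-trivial centre of $N$.
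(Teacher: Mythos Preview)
Your proof is essentially correct and follows the same approach as the paper, with considerably more detail. The paper's own proof is a brief four-sentence sketch: item~(i) by profiniteness of the closure of $U$; item~(ii) by observing that $\{U\cap V : V \text{ a Braconnier neighbourhood}\}$ is a neighbourhood basis for the $M$-topology on $N$; item~(iii) by the argument of Lemma~\ref{lem:AdjointClosure}; and item~(iv) by the remark that the canonical map is an isomorphism if and only if $U$ is open in the Braconnier topology on $N$. Your arguments for~(i)--(iii) flesh out exactly these ideas; your argument for~(iv) is likewise the comparison of the two topologies on $N$.

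There is, however, a genuine gap in your forward direction of~(iv). You assert that ``centrefreeness of $N$ is itself forced by the isomorphism'', but this is not true: if $N$ is non-trivial abelian and $U=\{1\}$ (so that conditions~(i) and~(ii) hold), then $M=N$ is discrete, $M/\centra(M)=1=\overline{\Inn(N)}$, yet $\centra_N(F)=N\not\se U$ for every finite $F$. Even granting centrefreeness of $N$, your step from $g\in\overline U\cdot\centra(M)$ to $g\in U$ is not justified: centrefreeness of $N$ gives only $\centra(M)\cap N=1$, not $\centra(M)\se\overline U$, and you need the latter (or $\centra(M)=1$) to conclude. The cleanest formulation is that the canonical map is an isomorphism if and only if the quotient topologies on $N/\centra(N)$ coincide, i.e.\ if and only if $\centra_N(F)\se U\cdot\centra(N)$ for some finite $F$; the paper's stated condition $\centra_N(F)\se U$ additionally forces $\centra(N)=1$ (as you correctly observe in your $\Leftarrow$ direction), so the two conditions agree precisely when $N$ is centrefree. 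This is a minor imprecision that the paper's one-line proof does not address either.
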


\begin{proof}
The first assertion is due to the fact that the closure of $U$ in $M$ is a quotient of the profinite
completion of $U$. The second assertion follows from the fact that a system of neighbourhoods of the identity
for the $M$-topology on $N$ is given by $U\cap V$, where $V$ ranges over the $\overline{\Inn(N)}$-neighbourhoods
of the identity. The third assertion follows by the same argument as in the proof of Lemma~\ref{lem:AdjointClosure}. For the last assertion, observe that $M=\overline{\Inn(N)}$ if and only if $U$ is open in
the Braconnier topology on $N$.
\end{proof}

\begin{example}\label{ex:1} 
Let $\Omega$ be a countably infinite set, let $N < \Sym(\Omega)$ be an infinite (almost) simple group of alternating \textbf{finitary permutations}\index{finitary permutation}, \emph{i.e.} permutations with finite support. Choose also  an equivalence relation $\sim$ on $\Omega$ all of whose equivalence classes have finite cardinality, and let $U< N$ be an infinite subgroup preserving each equivalence class. For each $g \in N$, there is a finite index subgroup $U' < U$ such that $g$ and $U'$ have disjoint support. Therefore $\centra_U(g)$ has finite index in $U$ for all $g \in N$. Moreover the intersection of all $N$-conjugates of $U$ is trivial since $N$ is almost simple.

Concretely, one could define $N$ as the group of all alternating finitary permutation, which is indeed simple, and define  $U$ as the subgroup preserving all equivalence classes of a relation $\sim$ which is a partition into subsets of fixed size $k>1$. The group $U$ is then isomorphic to a restricted direct sum of finite alternating groups of degree $k$ and $M$ is a  totally disconnected locally compact group which is topologically simple, topologically locally finite and quasi-discrete. (Here $M$ is centrefree because every asymptotically central sequence of $N$ converges pointwise to the identity.)

We remark that the examples of topologically simple locally compact groups admitting a dense normal subgroup which were constructed by G.~Willis in~\cite[\S\,3]{Willis07} all fit in this set-up, and can all be obtained by taking various specializations of the groups $N < \Sym(\Omega)$ and $U < N$. 
\end{example}

\begin{remark}
The previous example takes advantage of the fact that the group of all finitary permuations of a countably infinite set $\Omega$ is not \Adc. Notice however that its adjoint closure, which incidentally coincides with the group $\Sym(\Omega)$ of \emph{all} permutations of $\Omega$, is however not locally compact. Proposition~\ref{prop:Discrete:2} and Example~\ref{ex:1} thus correspond to completions which are genuinely intermediate between $\Ad(N)$ and $\overline{\Ad(N)}$. This is an instance of a general scheme that we shall present below, see Proposition~\ref{prop:QuasiDirect:constrution}.
\end{remark}

\section{Quasi-products and dense normal subgroups}\label{app:B}

For general locally compact groups, there is a naturally occurring
structure that is weaker than direct products. We establish its basic
properties and give some examples. In order to avoid some obvious
degeneracies, it is good to have in mind the centrefree case.

\subsection*{Definitions and the Galois connection}

Let $G$ be a topological group. We call a closed normal subgroup $N\lhd
G$ a \textbf{quasi-factor}\index{quasi-factor!in a group} of $G$ if $N.\centra_G(N)$ is dense in $G$.
In other words, this means that the $G$-action on $N$ is ``approximately inner'' in
the sense that the image of $G$ in $\Aut(N)$ is contained in the adjoint
closure $\overline{\Inn(N)}$.

\medskip

If $N$ is a quasi-factor, then $N\cap\centra_G(N)$ is contained in the
centre of $G$. Thus, in the centrefree case, quasi-factors provide an
example of the following concept with $p=2$:

We say that $G$ is the \textbf{quasi-product}\index{quasi-product} of the closed normal
subgroups $N_1, \ldots, N_p$ if the multiplication map
$$N_1\times\cdots\times N_p \lra G$$
is injective with dense image. We call the groups $N_i$ the
\textbf{quasi-factors}\index{quasi-factor!of a quasi-product} of this quasi-product; notice that $N_i$ and
$N_j$ commute for all $i\neq j$ and therefore each $N_i$ is indeed a
quasi-factor in the earlier sense.

\medskip

Given a quasi-product, one has a family of quotients $G\tto S_i$ defined
by $S_i=G/\centra_G(N_i)$. Notice that the image of $N_i$ in $S_i$ is a
dense normal subgroup; moreover, when $G$ is centrefree, $N_i$ injects
into $S_i$. Therefore, we obtain an injection with dense image:
$$G/\centra(G)\ \lra\ S_1\times\cdots\times S_p.$$
(The relation between quasi-products and dense normal subgroups will be
further investigated below; see Example~\ref{ex:1}.)

\medskip

The map $N\mapsto \centra_G(N)$ is an antitone Galois connection on the
set of closed normal subgroups of $G$ and in particular also on the
collection of quasi-factors. It turns out that this correspondence
behaves particularly well for certain groups appearing in the main
results of this article, as follows. Denote by $\Max$ (resp. $\Min$) the
set of all maximal (resp. minimal) closed normal subgroups which are
non-trivial.

\begin{prop}\label{prop:MinMax}
Let $G$ be a non-trivial compactly generated totally disconnected
locally compact group. Assume that $G$ is centrefree and without
non-trivial discrete quotient. If $\bigcap \Max$ is trivial, then the
following hold.
\begin{enumerate}[(i)]
\item $\Min$ and $\Max$ are finite and non-empty.

\item The assignment $N \mapsto \centra_G(N)$ defines a bijective
correspondence from $\Min$ to $\Max$.

\item Every element of $\Min \cup \Max$ is a quasi-factor.

\item $G$ is the quasi-product of its minimal normal subgroups.
\end{enumerate}
\end{prop}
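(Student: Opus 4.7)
The plan is to extract structural restrictions on $G$ from the hypotheses, establish the ``easy'' half of the $\Min$--$\Max$ correspondence, and finally confront the core difficulty of showing that the closure of the product of the minimal normal subgroups exhausts all of $G$.

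First, since $G$ has no non-trivial discrete quotient and $G/\Res(G)$ is residually discrete, one has $\Res(G) = G$; Proposition~\ref{prop:MaximalNormal} then yields the finiteness and non-emptiness of $\Max$. For each $N \in \Max$ the simple quotient $G/N$ is non-discrete (as a non-trivial quotient of $G$) and non-compact (else profinite and simple, hence finite and discrete). Using these two features I would rule out every non-trivial compact, abelian, or discrete closed normal subgroup of $G$: the image of such a subgroup in each $G/N$ is closed normal, hence trivial or all of $G/N$ by simplicity, and in each of the three flavours the second alternative is incompatible with the profile of $G/N$ (non-compactness in the first case; the fact that a topologically simple abelian totally disconnected locally compact group is cyclic of prime order, hence discrete, in the second; and in the third, an application of Proposition~\ref{prop:BEW} to the would-be quasi-discrete $G/N$ would produce a compact open normal subgroup of $G/N$, forcing $G/N$ to be compact or discrete by simplicity). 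The offending subgroup therefore lies in $\bigcap \Max = 1$. Proposition~\ref{prop:MinimalNormal} now applies and finishes~(i).

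For each $L \in \Min$, choose $N \in \Max$ with $L \not\leq N$ (possible by $\bigcap \Max = 1$); minimality forces $L \cap N = 1$, whence $[L,N]=1$ and $N \leq \centra_G(L)$, and topological simplicity of $G/N$ upgrades $LN$, and hence $L \cdot \centra_G(L)$, to a dense subgroup of $G$. So $L$ is a quasi-factor, giving half of~(iii); this density combined with minimality also shows $\centra_G(L) \in \Max$, since any proper closed normal $H \supsetneq \centra_G(L)$ must, by minimality, contain $L$ and hence the dense $L \cdot \centra_G(L)$, forcing $H = G$. The map $\psi : \Min \to \Max$, $L \mapsto \centra_G(L)$, is therefore well-defined, and its injectivity is short: if $\psi(L_1) = \psi(L_2) = N$ with $L_1 \neq L_2$, minimality gives $L_1 \cap L_2 = 1$, so $L_1 \leq \centra_G(L_2) = N = \centra_G(L_1)$, making $L_1$ abelian, excluded. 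Along the way I would also record the dual identity $\centra_G(N) = \bigcap_{N' \in \Max \setminus \{N\}} N'$ for $N \in \Max$: inclusion $\supseteq$ uses $\bigcap \Max = 1$, while the reverse comes from centrefreeness and topological simplicity of each $G/N'$, via the observation that a dense image of $\centra_G(N)$ in $G/N'$ would be centralised by the dense image of $N$ (which commutes with $\centra_G(N)$), forcing $G/N'$ to be abelian, impossible.

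The main obstacle is to establish~(iv): $K := \overline{\prod_{L \in \Min} L} = G$. Once this holds, surjectivity of $\psi$ in~(ii) is automatic (otherwise some $N \in \Max$ would satisfy $L \leq N$ for every $L \in \Min$, giving $K \leq N \lneq G$), and the remaining assertion of~(iii) follows at once, since surjectivity of $\psi$ yields $\centra_G(N) \supseteq L$ for the unique $L$ with $\psi(L) = N$, making $N \cdot \centra_G(N)$ dense by topological simplicity of $G/N$. To prove $K = G$ my plan is to argue by contradiction: if $K \leq N_0 \in \Max$, injectivity of $\psi$ forces $\psi(\Min) \subseteq \Max \setminus \{N_0\}$, and Proposition~\ref{prop:MinimalNormal} then forces $\centra_G(N_0) = 1$ (else $\centra_G(N_0)$ would contain some $L \in \Min$ with $\psi(L) = N_0$), hence $M_{N_0} := \bigcap_{N' \neq N_0} N' = 1$. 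The subdirect map $\phi : G \to \prod_{N' \neq N_0} G/N'$ is thus injective, and since $G$ is locally compact while the target is Hausdorff, $\phi(G)$ is a closed subgroup of the product; meanwhile $\phi(N_0)$ has dense projection to each factor. The remaining step --- the hardest part of the proof --- is to run a Goursat-style analysis on the closed subgroup $\phi(G)$ with surjective projections and the normal subgroup $\phi(N_0) \lhd \phi(G)$ with dense projections, exploiting centrefreeness and topological simplicity of each factor, to reach a contradiction with the existence of the further non-trivial simple quotient $G/N_0 = \phi(G)/\phi(N_0)$; concretely, one expects that the density of the projections of $\phi(N_0)$ inside the closed, normal-heavy structure $\phi(G)$ should force $\phi(N_0) = \phi(G)$ (i.e.\ $N_0 = G$), contradicting $N_0 \lneq G$.
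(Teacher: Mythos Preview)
Your treatment of~(i) and the injective half of the $\Min\to\Max$ map is sound and essentially parallels the paper. The dual identity $\centra_G(N)=\bigcap_{N'\neq N}N'$ is correct and nicely argued.

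The genuine gap is in~(iv). Two problems:

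\smallskip
\noindent\textbf{(a)} The assertion ``since $G$ is locally compact while the target is Hausdorff, $\phi(G)$ is a closed subgroup of the product'' is simply false. An injective continuous homomorphism between locally compact groups need not have closed image; the open mapping theorem requires \emph{surjectivity} onto a locally compact target. In fact, non-closedness of exactly this kind of subdirect embedding is the very phenomenon underlying non-trivial quasi-products, so you cannot hope to get it for free here.

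\smallskip
\noindent\textbf{(b)} Even granting closedness, the ``Goursat-style analysis'' is left entirely as an expectation. A closed subdirect product of finitely many topologically simple factors with a dense normal subgroup projecting densely to each factor is precisely the situation one does \emph{not} control without further input; there is no off-the-shelf Goursat lemma that forces $\phi(N_0)=\phi(G)$.

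\smallskip
The paper resolves~(iv) by a completely different device. Set $H=\overline{\langle L:L\in\Min\rangle}$ and $A=G/H$. Since each $K\in\psi(\Min)\subseteq\Max$ satisfies $L\cdot K$ dense for $L=\psi^{-1}(K)\leq H$, every such $K$ has dense image in $A$; and you have already shown $\bigcap_{K\in\psi(\Min)}K=1$ (your dual identity gives this once you know $\psi(\Min)=\Maxqf$, but in fact $\bigcap\psi(\Min)\leq\bigcap\Max=1$ is enough). The key lemma (Lemma~\ref{lem:nilp}) is the following short observation: a Hausdorff group with $p$ dense normal subgroups whose intersection is trivial is nilpotent of class~$\leq p-1$, because the iterated partial intersections yield a central series. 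On the other hand $G$ (hence $A$) has no non-trivial discrete quotient, in particular no non-trivial abelian quotient, so $A=\overline{[A,A]}$; a nilpotent group with this property is trivial. Thus $H=G$, and surjectivity of $\psi$ and the rest of~(ii),~(iii) follow exactly as you outlined.
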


This result provides in particular additional information on
characteristically simple groups, which supplements
Corollary~\ref{cor:CharSimple}. Indeed, the hypotheses of the
proposition are in particular fulfilled by characteristically simple
groups falling in case (iv) of Corollary~\ref{cor:CharSimple} (see also
Proposition~\ref{prop:MaximalNormal}).

\begin{proof}[Proof of Proposition~\ref{prop:MinMax}]
Notice that $G$ has no non-trivial compact quotient since every discrete
quotient is trivial. The set $\Max$ is non-empty since it has trivial
intersection; its finiteness follows from
Theorem~\ref{thm:structure:first}. Moreover, $G$ embeds in the product
of simple groups $\prod_{K \in \Max} G/K$, which implies that $G$ has
trivial quasi-centre and no non-trivial compact normal subgroup. Indeed,
otherwise some $G/K$ would be compact or quasi-discrete (as in the proof
of Corollary~\ref{cor:SimpleQuotients}). In particular,
Theorem~\ref{thm:structure:lower} implies that $\Min$ is finite and
non-empty; assertion~(i) is established. Actually, we shall use below
not only that $\Min$ is non-empty, but that every non-trivial closed
normal subgroup of $G$ contains a minimal one, see
Proposition~\ref{prop:MinimalNormal}.

\medskip

For the duration of the proof, denote by $\Maxqf$ the subset of $\Max$
consisting of those elements which are quasi-factors of $G$.

\medskip \noindent
\emph{We claim that  the map $N \mapsto \centra_G(N)$ defines a
one-to-one correspondence of  $\Min$ onto $\Maxqf$. Moreover, every
element of $\Min \cup \Maxqf$ is a quasi-factor. }

\medskip
Let $N \in \Min$. By hypothesis there is some $K \in \Max$ which does
not contain $N$. By minimality of $N$ we deduce that $N \cap K$ is
trivial and hence $[N,K] = 1$. Therefore $N.K$ is dense in $G$ by
maximality of $K$. In particular, $N$ and $K$ are both quasi-factors.
Moreover, since $\centra_G(N)$ contains $K$, maximality implies
$K=\centra_G(N)$ because $G$ is centrefree. In other words, $N \mapsto
\centra_G(N)$ defines a map $\Min\to\Maxqf$. Since any minimal closed
normal subgroup of $G$ different from $N$ commutes with $N$, it is
contained in $K$. Therefore, the above map is an injection of  $\Min$
into $\Maxqf$. It remains to show that it is surjective.

To this end, pick $K \in \Maxqf$. Then $\centra_G(K)$ is a non-trivial
closed normal subgroup of $G$. It therefore contains an element of
$\Min$, say $N$. By definition $K$ is contained in $\centra_G(N)$,
whence $K = \centra_G(N)$ by maximality. The claim stands proven.

\medskip \noindent
\emph{We claim that every element of $\Max \setminus \Maxqf$ contains
every element of $\Min$.}

\medskip
If $K \in \Max$ and $N \in \Min$ are such that $N \not \leq K$, then $N$
and $K$ commute and, hence, $K = \centra_G(N)$. Thus $K \in \Maxqf$ by
the previous claim.

\medskip \noindent
\emph{We claim that $\bigcap_{K \in \Maxqf} K =1$.}

\medskip

Otherwise $\bigcap_{K \in \Maxqf} K$ would contain some $N \in \Min$,
which is also contained in every $L \in \Max \setminus \Maxqf$ by the
previous claim. This contradicts the hypothesis that $\bigcap \Max$ is
trivial.

\medskip \noindent
\emph{We claim that $G = \overline{[G,G]}$.}

\medskip
By hypothesis $G$ has no non-trivial discrete quotient. This property is
clearly inherited by any quotient of $G$. Therefore, the claim follows
from the fact that the only totally disconnected locally compact Abelian
group with that property is the trivial group.

\medskip \noindent
\emph{We claim that $G = \overline{\la N \; | \; N \in \Min\ra}$.}

\medskip
Set $H = \overline{\la N \; | \; N \in \Min\ra}$ and $A = G/H$. It
follows from the first claim above that every element $K \in \Maxqf$ has
dense image in $A$. In view of the third claim above, we infer that  $A$
admits dense normal subgroups $L_1, \dots, L_p$ with trivial
intersection. In view of Lemma~\ref{lem:nilp} below, it follows that $A$
is nilpotent, hence trivial by  the previous claim.

\medskip \noindent
\emph{We claim that $\Max = \Maxqf$.}

\medskip
Indeed, every element of $\Max \setminus \Maxqf$ contains every element
of $\Min$. By the previous claim, this implies that every element of
$\Max \setminus \Maxqf$ coincides with $G$ and thus fails to be a
non-trivial subgroup.

\medskip Now assertions (ii), (iii) and (iv) follow at once.
\end{proof}

\begin{lem}\label{lem:nilp}
Let $A$ be a Hausdorff topological group containing $p$ dense normal
subgroups $L_1, \dots, L_p$ such that $\bigcap_{i=1}^p L_i = 1$. Then
$A$ is nilpotent of degree~$\leq p-1$
\end{lem}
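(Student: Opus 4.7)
The plan is to bound the $p$-th term $\gamma_p(A)$ of the lower central series by the closure of a suitable iterated commutator of the $L_i$, and then to observe that the latter is contained in $\bigcap_{i=1}^p L_i = \{1\}$. Recall that nilpotency of degree at most $p-1$ means $\gamma_p(A) = 1$.

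First, since $M$ and $N$ are both normal whenever $M,N \lhd A$, one has $[M,N] \leq M \cap N$. A straightforward induction on $k$ then shows that the left-normed iterated commutator
\[C_k := [[\dots [L_1,L_2],L_3],\dots,L_k]\]
satisfies $C_k \leq L_1 \cap L_2 \cap \dots \cap L_k$; in particular $C_p = \{1\}$.

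Next, I will prove by induction on $k$ that $\gamma_k(A) \leq \overline{C_k}$. The base case $k=1$ is the equality $\gamma_1(A) = A = \overline{L_1}$, which uses that $L_1$ is dense. For the inductive step, fix $x \in \gamma_k(A)$ and $a \in A$. The inductive hypothesis together with the density of $L_{k+1}$ in $A$ yields nets $(x_\alpha)$ in $C_k$ and $(a_\alpha)$ in $L_{k+1}$ with $x_\alpha \to x$ and $a_\alpha \to a$. Joint continuity of the commutator map $(y,z) \mapsto yzy^{-1}z^{-1}$ then gives $[x_\alpha, a_\alpha] \to [x,a]$; since each $[x_\alpha, a_\alpha]$ lies in $C_{k+1}$, we conclude $[x,a] \in \overline{C_{k+1}}$. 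As $\gamma_{k+1}(A)$ is generated by such commutators and $\overline{C_{k+1}}$ is a subgroup of $A$, this completes the induction.

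Combining the two steps at $k = p$ yields $\gamma_p(A) \leq \overline{C_p} \leq \overline{\{1\}} = \{1\}$, where the last equality uses the Hausdorff assumption on $A$, giving the desired nilpotency. The only point demanding a small amount of care is the continuity step above, namely combining approximations in the two commutator arguments via joint continuity in the topological group $A$; beyond that, I do not anticipate any serious obstacle.
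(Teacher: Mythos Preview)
Your proof is correct. The approach differs from the paper's: the paper constructs an explicit ascending central series by setting $M_j = \bigcap_{i=j}^p L_i$ and showing that each $\overline{M_{j+1}}/\overline{M_j}$ is central in $A/\overline{M_j}$ (using that $[L_j, M_{j+1}] \le L_j \cap M_{j+1} = M_j$ and density of $L_j$), thereby exhibiting a central series $1=\overline{M_1}\le \cdots \le \overline{M_p}=A$ and concluding via the upper central series. You instead bound the lower central series directly by iterated commutators of the $L_i$. Both arguments rest on the same two ingredients---$[M,N]\le M\cap N$ for normal subgroups, and density to pass commutation relations to all of $A$---but are organised dually. Your route is arguably slightly more self-contained in that it yields $\gamma_p(A)=1$ in one stroke, while the paper's route makes the central filtration explicit, which can be informative in applications. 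Neither is more elementary than the other.
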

\begin{proof}
For each $j = 1, \dots, p$, we set $M_j = \bigcap_{i=j}^p L_i $. In
particular $M_1$ is trivial and $M_p = L_p$.

Set $A_i =  A/ \overline{M_i}$ for all $i = 1, \dots, p$. We have a
chain of continuous surjective maps
$$A \cong A_1 \to A_2 \to \dots \to A_{p-1} \to A_p \cong 1.$$
Let $i <p$. Since  $M_i = L_i \cap M_{i+1}$, it follows that the
respective images of $L_i$  and $M_{i+1}$ in $A_i$ commute. Since
moreover $L_i$ is dense in $A$, it maps densely in $A_i$ and we deduce
that the image of $M_{i+1}$ in $A_i$ is central. In particular $A_i$ is
a central extension of $A_{i+1}$. It readily follows that the upper
central series of $A$ terminates after at most $p-1$ steps.
\end{proof}

\subsection*{On the non-Hausdorff quotients of a quasi-product}
The following result describes the \emph{algebraic} structure of the
generally non-Hausdorff quotient $G/N_1\cdots N_p$ (its topological
structure being trivial). It applies in particular to the case of
totally disconnected groups that are
Noetherian.

\begin{prop}\label{prop:Abelian}
Let $G$ be a totally disconnected locally compact group that is a
quasi-product with quasi-factors $N_1, \dots, N_p$.

\begin{enumerate}[(i)]
\item If $N_i$ possesses a maximal compact subgroup $U_i$ for some $i \in
\{1, \dots, p\}$, then for each compact  subgroup $U < G$ containing
$U_i$, the
quotient  $U/U_i.(U \cap \centra_G(N_i))$ is Abelian.

\item  If $N_i$ possesses a maximal compact subgroup $U_i$ for each $i \in
\{1, \dots, p\}$, then the quotient $G/\centra(G).N_1 \cdots N_p$ is
Abelian.
\end{enumerate}
\end{prop}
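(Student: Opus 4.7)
The plan for part (i) is to show that $[u_1, u_2] \in K := U_i \cdot (U \cap \centra_G(N_i))$ for all $u_1, u_2 \in U$ by combining a structural reduction, an approximate commutator formula, and a compactness-density argument. First I establish the following preliminaries: $U_i$ is characteristic in $N_i$ as its unique maximal compact subgroup, hence normal in $G$ (since $N_i \lhd G$); $U_i$ is open in $N_i$, because any compact open subgroup of $N_i$ furnished by van~Dantzig must lie in $U_i$ by maximality; consequently $U \cap N_i = U_i$; and $\centra_G(N_i) \lhd G$, so $U \cap \centra_G(N_i) \lhd U$ and $K$ is a closed (in fact compact) normal subgroup of $U$.

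I then invoke the quasi-factor hypothesis $\overline{N_i \cdot \centra_G(N_i)} = G$. For $u_2 \in U$, write $u_2 = \lim_\alpha n_\alpha c_\alpha$ with $n_\alpha \in N_i$ and $c_\alpha \in \centra_G(N_i)$. Since $n_\alpha$ commutes with $c_\alpha$, and since the conjugation $x \mapsto u_1 x u_1\inv$ preserves both $N_i$ and $\centra_G(N_i)$, the commutator identity together with the crucial cancellation that $n_\alpha \in N_i$ commutes with $[u_1, c_\alpha] \in \centra_G(N_i)$ yields
\[
[u_1, u_2] \ =\ \lim_\alpha [u_1, n_\alpha] \cdot [u_1, c_\alpha], \qquad [u_1, n_\alpha] \in N_i,\ [u_1, c_\alpha] \in \centra_G(N_i).
\]

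The third and most delicate step is to extract from this limit that $[u_1, u_2] \in K$. Since $K$ is closed in $U$, it suffices to show that $[u_1, u_2] \in KW$ for every identity neighbourhood $W$ of $G$. Choose $W$ so that $W \cap N_i \subseteq U_i$ (possible because $U_i$ is open in $N_i$). The plan is to exploit the compactness of $U$ together with the openness of $U_i$ in $N_i$ simultaneously, in order to rewrite, along a cofinal subnet, each approximation $[u_1, n_\alpha] \cdot [u_1, c_\alpha]$ as a product $y_\alpha \cdot z_\alpha \cdot w_\alpha$ with $y_\alpha \in U_i$, $z_\alpha \in U \cap \centra_G(N_i)$, and $w_\alpha \in W$; passing to the limit and using the closedness of $K$ then yields $[u_1, u_2] \in K$. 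This rewriting is the main obstacle of the proof: it requires simultaneously forcing the $N_i$-factor into the compact open $U_i \subset N_i$ (via its openness) and the $\centra_G(N_i)$-factor into the compact set $U \cap \centra_G(N_i)$ modulo $W$, and it is precisely here that the maximality assumption on $U_i$ enters in an essential way.

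For part (ii), I reduce to part (i). Given $g_1, g_2 \in G$, choose a compact open subgroup $V \subset G$ (van~Dantzig). Since $N_1 \cdots N_p$ is dense in $G$, write each $g_j = m_j v_j\inv$ with $m_j \in N_1 \cdots N_p$ and $v_j \in V$; as $N_1 \cdots N_p \lhd G$, we get $[g_1, g_2] \equiv [v_1, v_2] \pmod{N_1 \cdots N_p}$, reducing the claim to $[v_1, v_2] \in \centra(G) \cdot N_1 \cdots N_p$ for all $v_1, v_2 \in V$. Since the $U_j$'s pairwise commute (as the $N_j$'s do) and $V$ normalises each $U_j$ (as $U_j \lhd G$), the set $U := V \cdot U_1 \cdots U_p$ is a compact subgroup of $G$ containing every $U_j$. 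Applying part~(i) for each $i$ and intersecting gives $[V, V] \subseteq \bigcap_{i} U_i (U \cap \centra_G(N_i))$; a direct analysis of this intersection, using $\bigcap_i \centra_G(N_i) = \centra_G(N_1 \cdots N_p) = \centra(G)$ (the last equality by density of $N_1 \cdots N_p$ in $G$), yields $[V, V] \subseteq U_1 \cdots U_p \cdot (U \cap \centra(G)) \subseteq \centra(G) \cdot N_1 \cdots N_p$, completing the proof.
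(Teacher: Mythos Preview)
Your argument for part~(i) has a genuine gap at the ``third and most delicate step'', which you yourself flag as the main obstacle but do not actually carry out. The difficulty is real: from $[u_1,u_2]=\lim_\alpha [u_1,n_\alpha]\cdot[u_1,c_\alpha]$ with $[u_1,n_\alpha]\in N_i$ and $[u_1,c_\alpha]\in Z_i:=\centra_G(N_i)$, you need each factor to land (modulo a small error) in the compact sets $U_i$ and $U\cap Z_i$ respectively. But nothing controls the individual factors: the product converges in $U$, yet $n_\alpha$ (hence $[u_1,n_\alpha]$) may run off to infinity in $N_i$ while $c_\alpha$ compensates. Passing to $G/N_i$ or $G/Z_i$ does not help, since the maps $Z_i\to G/N_i$ and $N_i\to G/Z_i$ need not be proper. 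I do not see how the maximality of $U_i$ alone closes this gap along the lines you sketch.

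A secondary error: you assert that $U_i$ is the \emph{unique} maximal compact subgroup of $N_i$, hence characteristic and normal in $G$. This is false in general (think of $\mathrm{PGL}_2(\mathbf{Q}_p)$). What is true, and what the paper uses, is that $U\cap N_i=U_i$ by maximality, whence $U_i\lhd U$; but your part~(ii) construction $U=V\cdot U_1\cdots U_p$ genuinely relies on $U_j\lhd G$, which you have not justified.

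The paper's route is structurally different. It sets $H_i=U\cdot N_i$ with $U$ compact open, observes that $H_i\cap Z_i$ is a topologically $\overline{\mathrm{FC}}$-group (being centralised by the cocompact subgroup $N_i$), and applies U{\v s}akov's theorem: the periodic elements form $\LF(H_i\cap Z_i)$, and the quotient by it is torsion-free Abelian. A short argument using maximality of $U$ in $H_i$ identifies $\LF(H_i\cap Z_i)$ with $U\cap Z_i$, so $(H_i\cap Z_i)/(U\cap Z_i)$ is Abelian. Since $N_i\cdot(H_i\cap Z_i)$ is dense in $H_i$, this Abelian group maps onto a dense subgroup of $U/U_i\cdot(U\cap Z_i)$, which is therefore Abelian. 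For part~(ii) the paper first shows $G/N_i Z_i$ is Abelian (immediate from~(i)), so $[G,G]\subseteq\bigcap_i N_i Z_i$, and then computes $\bigcap_i N_i Z_i = N_1\cdots N_p\cdot\centra(G)$ by an explicit induction---essentially the ``direct analysis'' you allude to but do not perform.
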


\begin{proof}
Let $U_i< N_i$ be a maximal compact  subgroup and let $U<G$ be a
compact open subgroup containing $U_i$. Since $U \cap N_i$ is a
compact subgroup of $N_i$ containing $U_i$, we have $U \cap N_i =
U_i$ by maximality. Let also $Z_i = \centra_G(N_i)$. We shall first
show that $U / U_i.(U \cap Z_i)$ is Abelian, which is the
assertion~(i).

In order to establish this, consider the open subgroup $H_i :=
U.N_i$. Since $U_i$ is a maximal compact subgroup of $N_i$, it follows
that $U$ is a maximal compact subgroup of $H_i$.

Notice that $H_i \cap Z_i$ is centralised by a cocompact
subgroup of $H_i$, namely $N_i$. Therefore $H_i \cap Z_i$ is a
topologically $\overline{\text{FC}}$-group,
indeed the $H_i$-conjugacy class of every element is relatively compact.
By U{\v{s}}akov's result~\cite{Ushakov} (see Theorem~\ref{thm:Ushakov} above),
the set $Q$ of all its periodic elements coincides with the LF-radical and
the quotient $(H_i \cap Z_i)/Q$ is torsion-free Abelian (and discrete by total disconnectedness).
Since $H_i \cap Z_i$
is normal in $H_i$, it follows that $Q$ is normalised by $U$. Thus we can form the
subgroup $U \cdot Q$ of $H_i$, which is topologically locally finite and hence compact since $U$ was maximal compact in $H_i$.
This implies $Q \leq U$ and in particular we have $Q \leq U \cap Z_i=: V_i$.

Since $V_i $ is normalised by $U$ and
centralised by $N_i$, it is a compact normal subgroup of $H_i$ contained
in $H_i \cap Z_i$. By definition, this implies that $V_i$ is contained
in $Q$. Thus $Q = V_i$ and the quotient $(H_i \cap Z_i)/V_i$ is thus
torsion-free Abelian.

Notice that $Z_i$ contains $N_j$ for all $j \neq i$. Therefore
$N_i.Z_i$ is dense in $G$ and, since $H_i$ is open, it follows that
$H_i \cap (N_i.Z_i)= N_i. (H_i \cap Z_i)$ is dense in $H_i$.
Therefore the Abelian group $(H_i \cap Z_i)/V_i$ maps densely to
$H_i/N_i . V_i \cong U / U \cap (N_i.V_i) = U/(U \cap N_i).(U \cap
Z_i)$. We deduce that
the latter is Abelian, as claimed.

\medskip
Our next claim is that $G/N_i.Z_i$ is Abelian. Indeed, we have $G =
U.N_i.Z_i$ since $N_i.Z_i$ is dense. We deduce that $G/N_i. Z_i
\cong U/ U \cap (N_i. Z_i)$, which may be viewed as a quotient of
the group $U/(U \cap N_i).(U \cap Z_i)$. The latter is known to be
Abelian by (i), which confirms the present claim.

\medskip
Suppose now that each $N_i$ contains some maximal compact subgroup
$U_i$. The above discussion shows that the derived group $[G, G]$ is
contained in the intersection $N := \bigcap_{i=1}^p N_i.Z_i$. In
other words the quotient $G/N$ is Abelian, and it only remains to
show that
$$N = N_1 \cdots N_p . \centra(G).$$

Let $g \in N_1.Z_1 \cap N_2.Z_2$ and write $g = n_1z_1=n_2 z_2$ with
$n_i \in N_i$ and $z_i \in Z_i$. Then $n_1\inv z_2 =  z_1 n_2\inv $
belongs to $Z_1 \cap Z_2$ since $N_i \se Z_j$ for all $i \neq
j$. Since $Z_1 \cap Z_2 = \centra_G(N_1.N_2)$, we deduce that $g \in
N_1.N_2.\centra_G(N_1.N_2)$. This shows that $N_1.Z_1 \cap N_2. Z_2
\se N_1.N_2.\centra_G(N_1.N_2)$. Since the opposite inclusion
obviously holds true, we have in fact  $N_1.Z_1 \cap N_2. Z_2 =
N_1.N_2.\centra_G(N_1.N_2)$. A straightforward induction now shows
that
$$\bigcap_{i=1}^p N_i.Z_i = N_1\cdots N_p.\centra_G(N_1\cdots N_p).$$
Since $N_1 \cdots N_p$ is dense in $G$, we have
$\centra_G(N_1\cdots N_p) = \centra(G)$, from which the
assertion~(ii) follows.
\end{proof}

\subsection*{Quasi-products with Ad-closed quasi-factors}

The following gives a simple criterion for a quasi-product to be
direct.

\begin{lem}\label{lem:QuasiDirect:Adc}
Let $G$ be a locally compact group that is a quasi-product with
quasi-factors $N_1, \dots, N_p$.

If $N_1, \dots, N_{p-1}$ are \Adc and centrefree, then $G \cong N_1
\times \cdots \times N_p$.
\end{lem}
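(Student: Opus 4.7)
The plan is to proceed by induction on $p$, splitting off one Ad-closed quasi-factor at a time. The base case $p = 1$ is immediate: a quasi-factor is a closed normal subgroup by definition, and if its image is dense then it equals $G$. For the inductive step, the first task is to produce the topological decomposition $G \cong N_1 \times \centra_G(N_1)$. Applying Lemma~\ref{lem:centraliser:Adc} to the Ad-closed closed subgroup $N_1 < G$, the subgroup $N_1 \cdot \centra_G(N_1)$ is closed in $G$. Since the quasi-factors pairwise commute one has $N_j \leq \centra_G(N_1)$ for all $j \geq 2$, so this closed subgroup contains the dense subgroup $N_1 N_2 \cdots N_p$ and therefore equals $G$. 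Since $N_1$ is centrefree, $N_1 \cap \centra_G(N_1) = \centra(N_1) = 1$.

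To upgrade this algebraic decomposition to a topological direct product, I would exhibit a continuous retraction $\pi_1 \colon G \to N_1$. The conjugation action on the closed normal subgroup $N_1$ is a continuous homomorphism $\alpha \colon G \to \Aut(N_1)$ for the Braconnier topology, and its image lies in $\overline{\Inn(N_1)}$ because the dense subgroup $N_1 \cdot \centra_G(N_1)$ maps into $\Inn(N_1)$. The Ad-closedness of $N_1$ means that $\Inn(N_1)$ is closed in $\Aut(N_1)$ and that $N_1/\centra(N_1) \to \Inn(N_1)$ is a topological group isomorphism; combined with $\centra(N_1) = 1$ this yields a topological isomorphism $\Inn(N_1) \cong N_1$. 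Composing $\alpha$ with this inverse produces a continuous homomorphism $\pi_1 \colon G \to N_1$ which, because $\alpha(n) = \mathrm{inn}(n)$ for $n \in N_1$, restricts to the identity on $N_1$. Since $\ker(\pi_1) = \centra_G(N_1)$ commutes elementwise with $N_1$, the map $g \mapsto (\pi_1(g), \pi_1(g)^{-1} g)$ is a continuous group homomorphism $G \to N_1 \times \centra_G(N_1)$, inverse to the continuous multiplication map, giving the desired topological isomorphism.

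To close the induction, it suffices to check that $\centra_G(N_1)$ is itself a quasi-product with quasi-factors $N_2, \ldots, N_p$. Injectivity of $N_2 \times \cdots \times N_p \to \centra_G(N_1)$ is inherited from the original multiplication map; density follows by transporting $N_1 N_2 \cdots N_p$ across the isomorphism $G \cong N_1 \times \centra_G(N_1)$, since it corresponds to $N_1 \times (N_2 \cdots N_p)$ and the second coordinate projection is open. The inductive hypothesis then applies to $\centra_G(N_1)$, whose first $p-2$ quasi-factors are Ad-closed and centrefree, yielding $\centra_G(N_1) \cong N_2 \times \cdots \times N_p$ and hence $G \cong N_1 \times \cdots \times N_p$. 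The main obstacle is precisely the passage from the algebraic to the topological direct product in the first step: Ad-closedness is exactly the ingredient that supplies the continuous retraction $\pi_1$, without which a standard open-mapping argument would not be available at this level of generality.
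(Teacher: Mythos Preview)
Your argument is correct and follows the same inductive scheme as the paper's proof: split off $N_1$ using Lemma~\ref{lem:centraliser:Adc}, identify $\centra_G(N_1)$ as a quasi-product of $N_2,\dots,N_p$, and recurse. The only difference is one of explicitness: where the paper writes in a single sentence that Lemma~\ref{lem:centraliser:Adc} together with $\centra(N_1)=1$ yields the topological isomorphism $G\cong N_1\times\centra_G(N_1)$, you unpack this by exhibiting the continuous retraction $\pi_1\colon G\to N_1$ through $\Inn(N_1)$. This retraction is precisely what the proof of Lemma~\ref{lem:centraliser:Adc} produces (the map $\alpha\colon G\to\Inn(H)$ there, combined with the Ad-closed isomorphism $N_1\cong\Inn(N_1)$), so your elaboration is faithful to the paper's intent and justified in noting that an open-mapping argument is not available without $\sigma$-compactness.
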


\begin{proof}
We work by induction on $p$, starting by noticing that the statement
is empty for $p=1$. Since $[N_1, N_i] \se N_1 \cap N_i = 1$ for
$i >1$, we deduce that $N_2\cdots  N_p \se \centra_G(N_1)$. In
particular $N_1.\centra_G(N_1)$ is dense in $G$. From
Lemma~\ref{lem:centraliser:Adc} and the fact that $N_1$ has trivial
centre, it follows that $G \cong N_1 \times \centra_G(N_1)$. By
projecting $G$ onto $ \centra_G(N_1)$, we deduce that the product
$N_2\cdots N_p $ is dense in $ \centra_G(N_1)$. Thus $
\centra_G(N_1)$ is the quasi-product of $N_2, \dots, N_p$. The
desired result follows by induction.
\end{proof}

It will be shown in the next subsection that, conversely, a group
$N$ which is not \Adc may often be used to construct a non-trivial
quasi-product having $N$ as a quasi-factor, see
Example~\ref{ex:2} below.

\subsection*{Non-direct quasi-products and dense analytic normal subgroups}

We propose a general scheme to construct quasi-products out of a pair of topological groups
$M,N$ together with a faithful continuous $M$-action by automorphisms on $N$. The intuition is
that $M$ plays the role of \emph{some} adjoint completion of $N$ appearing in a quasi-direct
product with two quasi-factors isomorphic to $N$. The precise set-up is as follows.

\medskip

Let $M, N$ be topological groups and $\alpha:M\hookrightarrow \Aut(N)$ an injective continuous representation.
In complete generality, \emph{continuity} shall mean that the map $M\times N\to N$ is jointly continuous;
therefore, when considering locally compact groups, it suffices to assume that $\alpha$ is a
continuous homomorphism for the Braconnier topology on $\Aut(N)$.

In order to formalise the idea that $M$ is a generalisation of the Ad-closure $\overline{\Inn(N)}$,
we assume throughout
$$\Inn(N)\se\alpha(M) \hspace{.7cm} \text{ and }\hspace{.7cm}  \overline{\alpha\inv(\Inn(N))} = M.$$
Thus $\alpha(M)$ is indeed intermediate in $\Inn(N)\se\alpha(M)\se\overline{\Inn(N)}$. The trivial case,
\emph{i.e.} direct product, of our construction will be characterised by $\alpha(M)=\Inn(N)$. On the other
hand, already $\alpha(M)= \overline{\Inn(N)}$ will produce interesting examples.

\smallskip

We denote by $\alpha_\Delta:M\hookrightarrow \Aut(N\times N)$ the diagonal action, which is still
injective and continuous. We form the semi-direct product
$$H:=\ (N\times N) \rtimes_{\alpha_\Delta} M,$$
which is a topological group for the multiplication
$$(n_1, n_2, m)(n'_1, n'_2, m') = (n_1 \alpha(m)(n'_1), n_2 \alpha(m)(n'_2), m m').$$
We observe that the set
$$Z:=\ \Big\{ (n, n, m) : \alpha(m)=\Inn(n)\inv \Big\}$$
is a subgroup of $H$ and we write $G:=H/Z$. For convenience, we write $N_1=N\times 1$
and $N_2=1\times N$, which we view as subgroups of $H$.

\begin{prop}\label{prop:QuasiDirect:constrution}
~
\begin{itemize}
\item[\it (i)] $Z$ is a closed normal subgroup of $H$ (thus we consider $G$ as a topological group). 

\item[\it (ii)]  The morphism $N_i\to G$ is a topological isomorphism onto its image,
which is closed and normal in $G$; we thus identify $N_i$ and its image.
The resulting quotients $G/N_i$ are topologically isomorphic to $M$.

\item[\it (iii)] The morphism $N\times N\to G$ has dense image; the latter
is properly contained in $G$ if and only if $\alpha(M)\neq \Inn(N)$.
The kernel is the diagonal copy of $\centra(N)$ (in particular, if $N$ is centre-free,
$G$ is a quasi-product).

\item[\it (iv)]  $\centra_G(N_i) = N_{3-i}$; in particular, if $N$ is centre-free, so is $G$.

\item[\it (v)]  If $N$ is topologically simple, then $G$ is characteristically
simple. Moreover, $G$ cannot be written non-trivially as a direct product
unless $\alpha(M)= \Inn(N)$.
\end{itemize}
\end{prop}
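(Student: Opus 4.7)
The plan is to treat (i)--(v) sequentially, with (i) providing the basic algebraic setup and the remaining items following from direct computations in $H$ modulo $Z$, leveraging the defining relation $\alpha(m) = \Inn(n)^{-1}$ that cuts out $Z$. For (i), I would first show that $Z$ is closed in $H$ by writing it as the preimage of the identity under the continuous map $N \times M \to \Aut(N)$ sending $(n,m)\mapsto \alpha(m)\Inn(n)$, intersected with the closed diagonal $\{(n,n,m)\}$; here $\Aut(N)$ with its Braconnier topology is Hausdorff. That $Z$ is a subgroup follows because $\alpha$ and $\Inn$ respect products. Normality comes from a direct calculation: conjugating $(n,n,m) \in Z$ by $(a,b,k) \in H$ yields a triple whose first two components collapse (via $a\cdot\alpha(kmk^{-1})(a^{-1}) = 1$ using the defining relation) to the same element $n' = \alpha(k)(n)$, with third component $m' = kmk^{-1}$, and the relation $\alpha(m') = \alpha(k)\Inn(n)^{-1}\alpha(k)^{-1} = \Inn(n')^{-1}$ places the conjugate back in $Z$.

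For (ii), the triviality of $N_i \cap Z$ is immediate, so $N_i$ embeds in $G$. To identify $G/N_1$ with $M$, I would introduce the homomorphism $\phi\colon N\rtimes_\alpha M\to M$ defined by $\phi(n,m) = \alpha^{-1}(\Inn(n)\alpha(m))$; this is well defined since $\Inn(N)\subseteq\alpha(M)$ and $\alpha$ is injective, and a short manipulation using $\Inn(\alpha(m)(x)) = \alpha(m)\Inn(x)\alpha(m)^{-1}$ shows $\phi$ is a continuous surjective homomorphism whose kernel is exactly the image of $Z$ under $H \to H/N_1 = N\rtimes_\alpha M$. For (iii), I would compute explicitly that $(N\times N)\cdot Z = \{(a,b,m)\in H : m\in \alpha^{-1}(\Inn(N))\}$, which is dense in $H$ precisely by the hypothesis $\overline{\alpha^{-1}(\Inn(N))} = M$, and equal to $H$ precisely when $\alpha(M) = \Inn(N)$. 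The kernel of $N\times N\to G$ is readily identified as the diagonal copy of $\centra(N)$.

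For (iv), a direct computation shows that $(a,b,m)Z$ centralises $N_1$ iff $a\alpha(m)(n)a^{-1}=n$ for all $n\in N$, i.e.\ $\alpha(m)=\Inn(a)^{-1}$; this forces $(a,a,m)\in Z$, so $(a,b,m)\equiv (1,ba^{-1},1)\pmod Z$ lies in $N_2$, which combined with the evident commutation of $N_1$ and $N_2$ yields $\centra_G(N_i) = N_{3-i}$. The centrefree statement reduces to $N_1\cap N_2 = 1$, which under centrefreeness of $N$ follows from (iii). For (v), in the only relevant case $N$ is non-abelian and hence centrefree; a case analysis on $K\cap N_1$ and $K\cap N_2$ for a closed normal subgroup $K\lhd G$, using topological simplicity of $N_i\cong N$ and the fact that $K\cap N_i = 1$ forces $K\subseteq \centra_G(N_i) = N_{3-i}$, shows the lattice of closed normal subgroups of $G$ is exactly $\{1, N_1, N_2, G\}$. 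The swap involution $\sigma\colon (a,b,m)\mapsto (b,a,m)$ preserves $Z$ and so descends to an automorphism of $G$ interchanging $N_1$ and $N_2$, which eliminates $N_1$ and $N_2$ from the list of characteristic subgroups and establishes characteristic simplicity. A non-trivial direct product decomposition of $G$ must then have factors $\{N_1, N_2\}$, so the multiplication map $N\times N\to G$ is surjective, which by (iii) occurs only when $\alpha(M) = \Inn(N)$.

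The main obstacle I expect is topological rather than algebraic bookkeeping: turning the algebraic isomorphism $G/N_1\cong M$ into a topological one in (ii) requires that $\alpha$ be a topological embedding, and verifying that the image of $N_i$ in $G$ is closed requires that $N_i\cdot Z$ be closed in $H$, which in turn relies on the joint continuity of the map $(n,m)\mapsto \alpha(m)\Inn(n)$ and the closedness of $\{\mathrm{id}\}$ in $\Aut(N)$. These points are mild but must be handled carefully since no blanket assumption that $\alpha$ is a homeomorphism onto its image is stated.
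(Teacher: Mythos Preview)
Your argument is correct and follows essentially the same route as the paper. The only tactical variations are that the paper establishes normality of $Z$ in (i) by the cleaner observation that $N\times N$ \emph{centralises} $Z$ while $M$ normalises it, and handles the identification $G/N_i\cong M$ in (ii) via the complement decomposition $H=N_i\cdot Z\cdot M$, $N_i\cdot Z\cap M=1$, rather than your explicit homomorphism $\phi(n,m)=\alpha^{-1}(\Inn(n)\alpha(m))$; for (v) the paper applies the swap involution directly inside the characteristic-subgroup argument rather than first classifying all closed normal subgroups as you do. Your flagged concern about upgrading $G/N_i\cong M$ to a \emph{topological} isomorphism (which would require $\alpha$ to be a homeomorphism onto its image) is well taken; the paper's proof does not address this point explicitly either.
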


\begin{proof}
(i) The fact that $Z$ is closed follows from the fact that
the diagonal in $N\times N$ is closed and that $\alpha$ is continuous.
A computation shows that $N\times N$ centralises $Z$, whilst $M$ normalises it;
hence $Z$ is normal.

\medskip \noindent (ii) The morphism $N_1\to G$ is continuous and injective.
Suppose that a net $(n_\beta, 1, 1)\in N_1$ converges to some $(n, n', m)$ modulo
$Z$. Then there are nets $\nu_\beta\in N$, $\mu_\beta\in M$ with
$\alpha(\mu_\beta)=\Inn(\nu_\beta)\inv$ such that $(n_\beta \nu_\beta, \nu_\beta, \mu_\beta)$
tends to $(n, n', m)$. Thus $n_\beta$ converges in $N$ (to $n {n'}\inv$) and hence
the morphism is indeed closed. The image is normal by definition and the case of $N_2$ is analogous.

It is straightforward to show that $H = N_i.Z.M$ and that $N_i.Z\cap M =1$. Therefore
$H/N_i.Z \cong M$, as claimed.

\medskip \noindent (iii)
The density is equivalent to the density of
$Z.(N \times N)$ in $H$, which follows from the conditions on $\alpha$.
The additional statement on the image follows from the canonical identification of coset sets
$$G/(N\times N)\ \cong\ M/\alpha\inv(\Inn(N)).$$
The description of the kernel is due to the fact that $(N\times N)\cap Z$ consists
of those $(n, n, 1)$ with $\Inn(n)=1$.

\medskip \noindent (iv)
Suppose $(n_1, n_2, m)\in H$ commutes with $N_1$ modulo $Z$. Thus, for every $x\in N_1$ there
is $(\nu, \nu, \mu)\in Z$ with
$$(n_1 \alpha(m)(x), n_2, m)\ =\ (x n_1 \alpha(m)(\nu), n_2 \alpha(m)(\nu), m\mu).$$
The last two coordinates show that $\mu$ and $\nu$ are trivial. It follows that
$\alpha(m)(x) = n_1\inv x n_1$. Thus $\alpha(m)=\Inn(n_1)\inv$ and hence $(n_1, n_2, m)$
belongs to $N_2.Z$. The statement follows by symmetry and using the description of
the kernel of $N\times N\to G$ obtained above.

\medskip \noindent (v)
We can assume that $N$ has trivial centre.
Notice that the involutory automorphism of $N\times N$ defined by  $(u, v)
\mapsto (v,u)$ extends to a well defined automorphism of $H$ which
descends to an automorphism $\zeta$ of $G$ swapping the two factors
of the product  $N_1 . N_2$.

Let now $C < G$ be a (topologically) characteristic closed subgroup.
Assume first that $C \cap N_1 = 1$. Then $1 = \zeta(C \cap N_1) = C
\cap \zeta(N_1) = C \cap N_2$. Thus $C$ centralises $N_1. N_2$ and
is thus contained in $\centra(G)$ since $N_1. N_2$ is dense.
Therefore we have $C= 1$ by (iv) in this case.

Assume now that $C \cap N_1 \neq 1$. Then $N_1$ is contained in $C$
since $N$ is topologically simple by hypothesis. Transforming by the
involutory automorphism $\zeta$ shows that $N_2$, and hence also
$N_1.N_2$, is then contained in $C$, which implies that $C= G$ since
$C$ is closed and $N_1. N_2$ is dense. Thus $G$ is indeed
characteristically simple, as desired.

The above arguments show moreover that  $N_i$ are minimal closed
normal subgroups of $G$. This implies that if $G$ splits as a direct
product $G \cong L_1 \times L_2$ of closed normal subgroups, then,
upon renaming the factors, we have $N_i< L_i$ for $i=1, 2$. It
follows that $L_i < \centra_G(N_{3-i}) = N_i$ by (ii). Thus $N_i =
L_i$. In view of (iii), this implies that $G$ does not split
non-trivially as a direct product of closed subgroups provided
$\alpha(M)\neq \Inn(N)$.
\end{proof}

Our goal is now to present some concrete situations with $M$ and $N$ locally compact.

\begin{example}\label{ex:0}%
Let $M, N$ be totally disconnected locally compact groups and let  $\varphi:N\to M$\footnote{%
Of course $\varphi(N)$ is only analytic when $N$ is metrisable,
but this is the standard situation to which the subsection title refers.}
be a continuous injective homomorphism whose image is dense and normal in $M$. In particular, the conjugation action of $M$ on $\varphi(N)$ induces a homormorphism $\alpha : M \to \Aut(N)$; however $\alpha$ need not be continuous in general. However $\alpha$ is indeed continuous in the following cases.
\begin{itemize}
\item $N$ is discrete and $\varphi(N) \leq \QZ(M)$. 
\item $M = \overline{\Ad(N)}$ and $\varphi = \Ad$.  
\end{itemize}
Of course these two cases are not mutually exclusive. One is then in a position to invoke Proposition~\ref{prop:QuasiDirect:constrution}, which provides a totally disconnected locally compact group $G$ that is a quasi-direct product with two copies of $N$ as quasi-factors. 
\end{example}

It is now easy to construct non-trivial quasi-products of totally
disconnected groups by exhibiting a group
$N$ satisfying the required conditions. 

\begin{example}\label{ex:1bis}
Let $N$ be one of the discrete groups described in Example~\ref{ex:1}. This example yields a locally compact completion $M$ and a continuous homomorphism $\alpha : M \to \Aut(N)$ such that $\Ad(N) \leq \alpha(M) \leq \overline{\Ad(N)}$. If $N$ is simple, the group $G$ provided by Proposition~\ref{prop:QuasiDirect:constrution} is characteristically simple. In this way, we obtain various examples of characteristically simple locally compact groups which are quasi-products but do not split as direct products. Notice however that in these examples $N$ is not finitely generated and the corresponding $G$ is never compactly generated. 
\end{example}

Another way to satisfy the conditions of Example~\ref{ex:0} is to start with a group $N$ which is not \Adc, but whose adjoint closure $M = \overline{\Inn(N)}$ is locally compact. We proceed to describe a concrete example. Part of the interest of the example is that $N$ will be compactly generated, which implies that the associated groups $M$ and $G$ will be both compactly generated. Indeed, consider  a compact generating set 
$\Sigma$ for $N$ and $U$ any compact open subgroup of $M$. Then $U \cup \varphi(\Sigma)$
generates $M$, since $\la U \cup \varphi(\Sigma) \ra$ is open and
contains a dense subgroup. Thus $M$ is compactly generated, and so
is $H$ as well as all its quotients, including $G$.

\begin{example}\label{ex:2}
Consider the semi-direct product
$$N = \SL_3\big(\mathbf{F}_p(\!(t )\!)\big) \rtimes \ZZ,$$
where the cyclic group $\ZZ $ is any infinite cyclic subgroup of the
Galois group $\Aut\big(\mathbf{F}_p(\!(t )\!)\big)$. Then $N$ is not
\Adc, but $\Aut(N)$ is locally compact, as follows from
Theorem~\ref{thm:AutG:lc}. Moreover, the cyclic group $\ZZ$
normalises every characteristic subgroup of the compact open
subgroup
$$ \SL_3\big(\mathbf{F}_p[\![t ]\!]\big) <  \SL_3\big(\mathbf{F}_p(\!(t
)\!)\big),$$ from which it easily follows that $\ZZ$ contains an
unbounded asymptotically central sequence.

Notice furthermore that  $N$ is centrefree. The easiest way to see
this is by noticing that $N$ acts minimally without fixed point at
infinity on the Bruhat--Tits building associated with
$\SL_3\big(\mathbf{F}_p(\!(t )\!)\big)$ (see
\cite[Theorem~1.10]{Caprace-Monod_structure}). Thus
Proposition~\ref{prop:QuasiDirect:constrution} may be applied.
In conclusion, we find that the group
$$\frac{\Bigg(\Big(\SL_3\big(\mathbf{F}_p(\!(t )\!)\big) \rtimes
\ZZ\Big) \times \Big(\SL_3\big(\mathbf{F}_p(\!(t )\!)\big) \rtimes
\ZZ\Big)\Bigg)%
\rtimes \overline{\Ad\Big(\SL_3\big(\mathbf{F}_p(\!(t )\!)\big) \rtimes
\ZZ\Big)}}%
{\Big\{ (z, z, \Ad(z)\inv) : z\in \SL_3\big(\mathbf{F}_p(\!(t )\!)\big)
\rtimes \ZZ\Big\}}$$
provides an example of a compactly generated totally disconnected locally
compact group with trivial quasi-centre (in particular centrefree) which
is a non-trivial
quasi-product. However, this example is not characteristically simple.
\end{example}

\subsection*{Open problems}

Corollary~\ref{cor:CharSimple} naturally suggests the following
question.

\begin{question}\label{question:1}
Is there a compactly generated characteristically simple locally
compact group $G$ that is a quasi-product with at least two simple
quasi-factors, but which does not split non-trivially as a direct
product?
\end{question}

Finally, we mention two related problems.

\begin{question}\label{question:2}
Is there a compactly generated topologically simple locally compact
group $G$ which contains a proper dense normal subgroup?
\end{question}

\begin{question}\label{question:3}
Is there a compactly generated topologically simple locally compact
group $G$ which is not \Adc?
\end{question}

As explained in Example~\ref{ex:2}, a positive answer to the latter
implies a positive answer to both Questions~\ref{question:1}
and~\ref{question:2}. Moreover, Proposition~\ref{prop:MinMax} implies
that a positive answer to Question~\ref{question:1} also gives a
positive answer to Question~\ref{question:2}.

\subsection*{On dense normal subgroups of topologically simple groups}%
We close this appendix with the following result, due to
N.~Nikolov~\cite[Proposition~2]{Nikolov}, which provides however severe
restrictions on possible non-Hausdorff quotients of topologically
simple groups.

\begin{prop}\label{prop:Nikolov}
Let $M$ be a compactly generated totally disconnected locally
compact group which is locally finitely
generated. Assume that $M$ has no non-trivial compact normal subgroup.
Then any dense normal subgroup contains the derived group
$[M, M]$. In particular, if $M$ is topologically simple, then it is
abstractly simple if and only if it
is abstractly perfect.
\end{prop}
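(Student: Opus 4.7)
The plan is to follow Nikolov's strategy of reducing the statement to a computation inside a topologically finitely generated compact open subgroup of $M$, and then invoking the theorem of Nikolov--Segal on abstract quotients of topologically finitely generated profinite groups.

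First I would set up the structure. Since $M$ is locally finitely generated, fix a topologically finitely generated compact open subgroup $U<M$; since $M$ is compactly generated, fix a finite set $F\subset M$ with $M=\langle U\cup F\rangle$. Given a dense normal subgroup $D\lhd M$, the set $D\cdot U$ is open and contains $D$, so density forces $D\cdot U=M$; thus each $f\in F$ decomposes as $f=d_fu_f$ with $d_f\in D$ and $u_f\in U$, and $M$ is algebraically generated by $D\cup U$. Using the commutator identities $[ab,c]=a[b,c]a\inv\cdot[a,c]$ and $[a,bc]=[a,b]\cdot b[a,c]b\inv$, together with the normality of $D$ (which gives $[M,D]\subseteq D$), a routine calculation shows that for $d,d'\in D$ and $u,u'\in U$ one has $[du,d'u']\in D$ if and only if $[u,u']\in D$. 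Hence the problem reduces to showing $[U,U]\subseteq N$, where $N:=D\cap U$ is a dense normal subgroup of the topologically finitely generated profinite group $U$.

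The core argument takes place inside $U$, where the decisive tool is the theorem of Nikolov--Segal: in a topologically finitely generated profinite group, every finite-index subgroup is open, and consequently the abstract commutator subgroup $[U,U]$ coincides with its topological closure and consists of products of a bounded number of commutators. It follows that the abelianization $A:=U/[U,U]$ is a topologically finitely generated profinite abelian group in which the image of $N$ is dense. The remaining task is to promote density to surjectivity, that is, to establish $N\cdot[U,U]=U$. This is where the hypothesis that $M$ has no non-trivial compact normal subgroup intervenes crucially: the intersection $\bigcap_{m\in M}mUm\inv$ is a compact normal subgroup of $M$ and therefore trivial, and combining this global rigidity with the Nikolov--Segal width bound allows one to realise arbitrary elements of $[U,U]$ as commutator products whose factors lie in the dense normal subgroup $N$.

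The main obstacle is precisely this last step: bridging the local profinite structure of $U$ (where Nikolov--Segal applies) with the global no-compact-normal hypothesis on $M$ to upgrade density in $A$ to actual surjectivity. The ``in particular'' then follows at once from the main claim: for a topologically simple $M$, any non-trivial proper abstract normal subgroup has closure equal to $M$ by topological simplicity and is therefore dense, hence contains $[M,M]$ by the main statement; this forces $M$ to be abstractly simple exactly when $[M,M]=M$, that is, when $M$ is abstractly perfect.
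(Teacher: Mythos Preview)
The paper itself does not give a proof: it simply refers to Nikolov~\cite[Proposition~2]{Nikolov} and remarks that, although Nikolov states the result for topologically simple $M$, his argument only uses the absence of non-trivial compact normal subgroups. So there is nothing to compare at the level of details; your sketch is an attempt to reconstruct Nikolov's argument rather than the authors'.

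Your reduction is correct and cleanly done. The observations that $D\cdot U=M$, that $M=\langle D\cup U\rangle$, and that (by normality of $D$) one has $[du,d'u']\equiv[u,u']\pmod D$, so that it suffices to prove $[U,U]\subseteq N:=D\cap U$, are all sound; likewise the fact that $N$ is a dense normal subgroup of the topologically finitely generated profinite group $U$.

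Where the sketch becomes unreliable is at the key profinite step. You state the ``remaining task'' as establishing $N\cdot[U,U]=U$, but that is not the goal: surjectivity of $N$ onto $U/[U,U]$ does \emph{not} by itself imply $[U,U]\subseteq N$. In the next sentence you switch to the correct target (writing elements of $[U,U]$ as products of commutators with factors in $N$), so the two formulations are conflated. You are right that the hypothesis $\bigcap_{m}mUm^{-1}=1$ must enter here and that Nikolov--Segal bounded-width results are the engine, but the mechanism by which triviality of that core combines with the width bound to force $[U,U]\subseteq N$ is precisely the substantive content of Nikolov's argument, and your sketch does not supply it. Since you explicitly flag this as the ``main obstacle'', the proposal is an honest outline with the crux correctly located but left open --- which, given that the paper itself defers to~\cite{Nikolov}, is not unreasonable, but is not a proof.
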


\begin{proof}
See~\cite[Proposition~2]{Nikolov}. The statement from \emph{loc.~cit.}
requires $M$ to be topologically simple, but only the absence of \emph{compact}
normal subgroups is used in the proof.
\end{proof}

\bibliographystyle{amsalpha}
\bibliography{../IsomCAT0}
\printindex
\end{document}